\documentclass[11pt]{amsart}
\usepackage{geometry}                
\usepackage{verbatim}
\geometry{letterpaper}                   
\usepackage{graphicx}
\usepackage{amssymb}
\usepackage{epstopdf}
\DeclareGraphicsRule{.tif}{png}{.png}{`convert #1 `dirname #1`/`basename #1 .tif`.png}
\newtheorem{theorem}{Theorem}[section]
\newtheorem{lemma}[theorem]{Lemma}

\newtheorem{prop}[theorem]{Proposition}
\theoremstyle{definition}

\def\C{\Gamma}

\def\eproof{$\Box$ \medskip}
\def\cal{\mathcal}
\def\Tr{{\rm Tr}}

\newcommand{\JJ}{{\bf J}}
\newcommand{\CC}{\mathbb C}
\newcommand{\Sph}{\mathbb S}
\newcommand{\R}{\mathbb R}

\newcommand{\N}{\mathbb N}
\newcommand{\Z}{\mathbb Z}

\newcommand{\Hp}{{\mathbb H}^2}

\title{Simple length rigidity for Kleinian surface groups and applications}
\author{Martin Bridgeman}
\address{Boston College}
\author{Richard D. Canary}
\address{University of Michigan}
\date{\today}
\thanks{Bridgeman was partially suppported by grant DMS-1500545 and
Canary was partially supported by  grant DMS-1306992, from the National Science Foundation.
The authors  also acknowledge  support from U.S. National Science Foundation grants 
DMS 1107452, 1107263, 1107367 ``RNMS: GEometric structures And Representation varieties" (the GEAR Network).}

\begin{document}
\begin{abstract}
We prove that a Kleinian surface groups is determined, up to conjugacy in the isometry group of $\mathbb H^3$,
by its simple marked length spectrum.
As a first application, we show that a discrete faithful representation of
the fundamental group of a compact, acylindrical, hyperblizable 3-manifold $M$ is similarly determined
by the translation lengths of  images of elements of $\pi_1(M)$ represented
by simple curves on the boundary of $M$. As a second application, we show the group of
diffeomorphisms of quasifuchsian space which preserve the renormalized pressure intersection  is
generated by the (extended) mapping class group and complex conjugation.
\end{abstract}
\maketitle

\section{Introduction}

We show that  if $\rho_1$ and $\rho_2$ are two discrete, faithful representations of a surface group $\pi_1(S)$ into 
${\rm PSL}(2,\mathbb C)$ with the same simple marked length spectrum, then $\rho_1$ is either conjugate 
to $\rho_2$ or its complex conjugate.
(Two such representations have the same simple marked length spectrum if whenever $\alpha\in\pi_1(S)$ is
represented by a simple closed curve, then the images of $\alpha$ have the same translation length. The complex
conjugate of a representation is obtained by conjugating the representation by $z\to\bar z$.)
March\'e and Wolff \cite[Sec. 3]{marche-wolff} have exhibited non-elementary
representations of a closed surface group of genus two
into ${\rm PSL}(2,\mathbb R)$ with the same simple marked length spectrum which do not have the same
marked length spectrum, so the corresponding statement does not hold for non-elementary representations.

We give two applications of our main result. First, if $M$ is a compact, acylindrical, hyperbolizable 3-manifold, we show that if
$\rho_1$ and $\rho_2$ are  discrete faithful representations  of $\pi_1(M)$ into ${\rm PSL}(2,\mathbb C)$
such that translation
lengths of the images of elements of $\pi_1(M)$ corresponding to simple curves in the boundary of $M$ agree,
then $\rho_1$ is either conjugate to $\rho_2$ or its complex conjugate.
For our second application we consider the renormalized pressure intersection,
first defined by Burger \cite{burger-renorm} and  further studied by Bridgeman-Taylor \cite{BT08}.
Bridgeman \cite{Bri10} (see also \cite{BCLS}) 
showed that the Hessian of the renormalized pressure intersection  gives rise to a 
path metric on quasifuchsian space $QF(S)$. We show that the group of diffeomorphisms of $QF(S)$
which preserve the renormalized pressure intersection  is generated by the (extended) mapping class group
and the involution of $QF(S)$ determined by complex conjugation.

\subsection{Simple length rigidity for Kleinian surface groups}
A {\em Kleinian surface group} is a discrete, faithful representation
$\rho:\pi_1(S)\to {\rm PSL}(2,\mathbb C)$ where 
$S$ is  a closed, connected, orientable surface of genus at least two. 
If $\alpha\in\pi_1(S)$, then let  $\ell_\rho(\alpha)$ denote the translation of length of $\rho(\alpha)$,
or equivalently the length of the closed geodesic in the homotopy class of $\alpha$ in the quotient
hyperbolic 3-manifold $\mathbb H^3/\rho(\pi_1(S))$.
We say that two Kleinian surface groups $\rho_1:\pi_1(S)\to {\rm PSL}(2,\mathbb C)$   and
$\rho_2:\pi_1(S)\to {\rm PSL}(2,\mathbb C)$ have the 
{\em same marked  length spectrum}
if $\ell_{\rho_1}(\alpha)=\ell_{\rho_2}(\alpha)$ for all $\alpha\in\pi_1(S)$. Similarly, we say that $\rho_1$ and $\rho_2$
have the {\em same simple marked  length spectrum}
if $\ell_{\rho_1}(\alpha)=\ell_{\rho_2}(\alpha)$ whenever $\alpha$ has a representative on $S$ which is a simple closed curve.
If $\rho:G\to {\rm PSL}(2,\mathbb C)$ is a representation we define its {\em complex conjugate} $\bar\rho_2$ to
be the representation obtained by conjugating by $z\to\bar z$.

\begin{theorem}{}
\label{main}
{\rm (Simple length rigidity for Kleinian surface groups)} If $S$ is a closed, connected,
orientable surface of genus at least two, and $\rho_1:\pi_1(S)\to {\rm PSL}(2,\mathbb C)$ and 
$\rho_2:\pi_1(S)\to {\rm PSL}(2,\mathbb C)$ are Kleinian surface groups with the same
simple marked length spectrum, then $\rho_1$ is conjugate to either $\rho_2$ or $\bar\rho_2$.
\end{theorem}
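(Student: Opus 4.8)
The plan is to reduce the statement to an equality of \emph{end invariants} and then to recover those invariants from the simple marked length spectrum. By the tameness theorem and the Ending Lamination Theorem of Brock--Canary--Minsky, a Kleinian surface group $\rho$ is determined, up to conjugacy in ${\rm PSL}(2,\mathbb{C})$, by the ordered pair $(\nu^+_\rho,\nu^-_\rho)$ attached to the two ends of $\mathbb{H}^3/\rho(\pi_1(S))\cong S\times\mathbb{R}$: each $\nu^\pm_\rho$ is a point of the Teichm\"uller space $\mathcal{T}(S)$ when the corresponding end is geometrically finite and an ending lamination when it is degenerate. Complex conjugation $\rho\mapsto\bar\rho$ is induced by an orientation-reversing isometry of $\mathbb{H}^3$, so it reverses the orientation of $S\times\mathbb{R}$, interchanging the two ends and replacing the Teichm\"uller parameters (the ending laminations) by their mirror images. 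Thus it suffices to recover, from the common function $\alpha\mapsto\ell_{\rho_i}(\alpha)$ on simple closed curves, the unordered pair of end invariants and then to match the two pairs compatibly with orientation. The whole argument is organized as an induction on $-\chi(S)$: essential subsurfaces are the engine of the reduction, so the theorem must also be proved for surfaces with punctures, and the base cases are the once-punctured torus and the four-times punctured sphere.

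First I would treat the degenerate ends. A degenerate end carries a sequence of essential simple closed curves whose geodesic representatives exit the end; these may be taken of uniformly bounded $\rho$-length, and they converge in $\mathcal{PML}(S)$ precisely to the ending lamination of that end (Thurston, Bonahon, Canary). Hence the ending laminations of $\rho$ are intrinsic to $\alpha\mapsto\ell_\rho(\alpha)$: they are the limits of simple closed curves of $\rho$-length bounded by a fixed Margulis-type constant. Since $\ell_{\rho_1}$ and $\ell_{\rho_2}$ agree on all simple closed curves, $\rho_1$ and $\rho_2$ have the same number of degenerate ends and the same set of ending laminations; the remaining ordering ambiguity when both ends are degenerate is precisely the one absorbed by $\rho_2\mapsto\bar\rho_2$, after one checks, by examining lengths of curves whose geodesics must run between the two ends and which therefore see the \emph{ordered} pair, that it cannot be resolved any other way. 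What is left is the case in which at least one end is geometrically finite.

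The geometrically finite case is the heart of the matter, and the step I expect to be the main obstacle. On the Fuchsian locus there is no difficulty: $\ell_\rho$ restricted to simple closed curves is then the hyperbolic length spectrum of a point of $\mathcal{T}(S)$, and the classical fact that finitely many simple closed curves already embed $\mathcal{T}(S)$ into $\mathbb{R}^{\mathcal{S}}$ pins the point down. The problem is to propagate rigidity off the Fuchsian locus, where the obstruction is genuine: the simple marked length spectrum records the real parts $\mathrm{Re}\,\lambda_\rho(\alpha)=\ell_\rho(\alpha)$ of the complex translation lengths but not their imaginary (bending) parts, and the March\'e--Wolff examples show that for non-discrete representations this loss of information is fatal, so discreteness and faithfulness must be used decisively. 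My plan: (i) pass to an essential subsurface $W\subsetneq S$ --- simple closed curves of $W$ remain simple in $S$, and a finitely generated subgroup of a geometrically finite Kleinian group is geometrically finite, so $\rho_1|_{\pi_1(W)}$ and $\rho_2|_{\pi_1(W)}$ are Kleinian surface groups of $W$ with the same simple marked length spectrum; (ii) by the inductive hypothesis $\rho_1|_{\pi_1(W)}$ is conjugate to $\rho_2|_{\pi_1(W)}$ or to its complex conjugate; (iii) run over a family of subsurfaces with pairwise non-elementary overlaps generating $\pi_1(S)$ and reassemble the (unique, by non-elementarity) conjugating elements into a single $h$ with $\rho_1=h\rho_2h^{-1}$ or $\rho_1=h\bar\rho_2h^{-1}$, the conjugate-versus-complex-conjugate dichotomy being forced to be consistent across the overlaps unless some subsurface group is amphichiral, a degenerate situation that can be handled separately and that forces $\rho_2$ itself to be amphichiral, making the dichotomy vacuous. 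The induction bottoms out on the once-punctured torus and the four-times punctured sphere, and the real crux is this base case: there the deformation space of (possibly geometrically infinite) groups is low-dimensional and explicitly coordinatised --- by Maskit/Bers coordinates or complex Fenchel--Nielsen coordinates --- the complex lengths of the simple closed curves are controlled holomorphic functions of these coordinates with known asymptotics, and one must show that their real parts, together with the coarse comparisons of $\ell_\rho$ with lengths on the conformal boundary and on pleated surfaces realizing the simple multicurves, with McShane--Bowditch-type identities, and with the shape of the quasi-Fuchsian and Bers parameter spaces, determine the coordinates up to the mirror involution. Converting this countable list of real numbers into the complex parameter while ruling out the March\'e--Wolff phenomenon by discreteness is the step I expect to be hardest.
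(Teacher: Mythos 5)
Your proposal is a reduction framework, not a proof: it defers exactly the step that constitutes the mathematical content of the theorem. In the presence of a geometrically finite end you must recover a point of Teichm\"uller space (a conformal structure at infinity) from the purely real data $\alpha\mapsto\ell_\rho(\alpha)$ on simple closed curves, and you yourself flag this as ``the step I expect to be hardest'' while offering only a list of possible tools (Bers/Maskit coordinates, pleated surfaces, McShane--Bowditch identities) and no argument. That missing step is precisely what the paper does, by a completely different and much more elementary route: an asymptotic expansion of the translation lengths $\ell_{\rho_i}(\alpha^n\beta)$ for $\beta$ meeting a fixed simple non-separating $\alpha$ at most once shows that equality of the \emph{real} simple length spectra forces equality (or complex conjugacy) of the \emph{complex} length spectra on the family $W(\alpha)$ (Lemmas \ref{basic comp}--\ref{unreal case}), and then trace-normalized lifts to ${\rm SL}(2,\mathbb C)$ and a direct matrix analysis on the handle subgroups $G_j=\langle\alpha_j,\beta_j\rangle$ (Lemmas \ref{lifts exist}--\ref{genus two case}) yield global conjugacy. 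Discreteness enters only through faithfulness, non-elementarity of two-generator subgroups, and Sullivan's finiteness of cusps; no end-invariant machinery is needed. Without some such mechanism for converting real lengths into complex lengths, your plan cannot rule out the bending/imaginary-part ambiguity, which is exactly the phenomenon the March\'e--Wolff examples exploit.

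The inductive scaffolding also has a structural flaw. When you restrict to an essential proper subsurface $W\subsetneq S$, the group $\pi_1(W)$ is free and $\rho_i|_{\pi_1(W)}$ need not send $\partial W$ to parabolics, so the restrictions are not Kleinian surface groups of a punctured surface in any sense covered by an inductive hypothesis parallel to the theorem; the deformation space of discrete faithful representations of $F_2$ (your once-punctured torus base case, without the parabolic commutator condition) is far larger, its end theory is different, and simple-length rigidity there is not known and is at least as hard as the original statement. Similar sketchiness affects the doubly degenerate case: that the swapped ordered pair of ending laminations is realized exactly by $\bar\rho_2$, and that no other identification is compatible with equal simple length spectra, is asserted (``one checks, by examining lengths of curves whose geodesics must run between the two ends'') rather than proved, and the complex conjugate of $\rho_2$ changes the end invariants by an orientation-reversing mirror as well as a swap, so the bookkeeping you rely on is not automatic. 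In short, both the base case and the gluing step of your induction, as well as the geometrically finite analysis, contain genuine gaps, and the first of these is the heart of the theorem.
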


Since the full isometry group of $\mathbb H^3$ may be identified with the group generated by ${\rm PSL}(2,\mathbb C)$,
regarded as the group of fractional linear transformations, and $z\to \bar z$, one may reformulate our
main result as saying that two Kleinian surface groups with the same simple marked length spectrum are
conjugate in the isometry group of $\mathbb H^3$.
 
\medskip\noindent
{\bf Historical remarks:}
It is a classical consequence of the Fenchel-Nielsen coordinates for Teichm\"uller space that there are 
finitely many simple curves on $S$ whose lengths determine a Fuchsian 
(i.e. discrete and faithful) representation of $\pi_1(S)$  into ${\rm PSL}(2,\mathbb R)$ 
up to
conjugacy in ${\rm PGL}(2,\mathbb R)$, which we may identify with the isometry group of $\mathbb H^2$.
However, March\'e and Wolff \cite[Sec. 3]{marche-wolff} showed that there exist
non-Fuchsian representations of the fundamental group of a surface of genus two into ${\rm PSL}(2,\mathbb R)$
with the same simple marked length spectrum which do
not have the same marked length spectrum. The representations constructed by March\'e and Wolff do
not lift to ${\rm SL}(2,\mathbb R)$, so do not lie in the same component of the ${\rm PSL}(2,\mathbb C)$-representation variety
as the discrete faithful representations.

Kourounitis \cite{kourounitis} showed that there are finitely many simple curves on $S$ whose complex
lengths  (see section \ref{complex length} for a discussion of complex length)
determine a quasifuchsian surface group up to conjugacy in ${\rm PSL}(2,\mathbb C)$. 
Culler and Shalen \cite[Prop. 1.4.1]{culler-shalen} showed that there are finitely many curves
whose traces determine a non-elementary representation into ${\rm SL}(2,\mathbb C)$, 
up to conjugacy in ${\rm SL}(2,\mathbb C)$, while 
Charles-March\'e \cite[Thm. 1.1]{charles-marche} showed that one may choose the finite collection to
consist of simple closed curves.

Kim \cite{kim-rigid} previously showed that two Zariski dense representations into the isometry group ${\rm Isom}(X)$
of a rank one symmetric space $X$ with the same full marked length spectrum are conjugate in ${\rm Isom}(X)$.
More generally, Dal'Bo and Kim \cite{dalbo-kim} showed that  any surjective homomorphism between Zariski
dense subgroups of a  semi-simple Lie group $G$, with trivial center and no compact factors, which
preserves translation length on the associated symmetric space extends to an automorphism of $G$.

\subsection{Simple length rigidity for acylindrical hyperbolic 3-manifolds}

A compact, orientable 3-manifold $M$ with non-empty boundary is said to be {\em hyperbolizable} if its interior admits
a complete hyperbolic metric, which implies that there exists
a discrete, faithful representation of $\pi_1(M)$ into ${\rm PSL}(2,\mathbb C)$.
A compact, hyperbolizable 3-manifold 
is said to be {\em acylindrical} if every $\pi_1$-injective proper map of an annulus into $M$ is
properly homotopic into the boundary of $M$. (Recall that a map of a surface into a $3$-manifold
is said to be proper if it maps the boundary of the surface into the boundary of 3-manifold and
that a proper homotopy is a homotopy through proper maps.)

In this setting, we use Theorem \ref{main} show that a discrete, faithful
representation of $\pi_1(M)$ into ${\rm PSL}(2,\mathbb C)$ is determined, up to conjugacy in
the isometry group of $\mathbb H^3$, by the translation lengths of images of simple  curves in the boundary
$\partial M$ of $M$.

\begin{theorem}
\label{acyl app}
If $M$ is a compact, acylindrical, hyperbolizable 3-manifold, and \hbox{$\rho_1:\pi_1(M)\to {\rm PSL}(2,\mathbb C)$} and
$\rho_2:\pi_1(M)\to {\rm PSL}(2,\mathbb C)$ are two discrete faithful representations, such
that $\ell_{\rho_1}(\alpha)=\ell_{\rho_2}(\alpha)$ if $\alpha\in\pi_1(M)$ is represented by a simple
closed curve on $\partial M$, then $\rho_1$ is conjugate to either $\rho_2$ or $\bar\rho_2$.
\end{theorem}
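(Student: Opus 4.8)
The plan is to deduce Theorem \ref{acyl app} from Theorem \ref{main} by passing to the boundary. Since $M$ is acylindrical and hyperbolizable, each component $S_i$ of $\partial M$ is a closed orientable surface of genus at least two, and the inclusion $S_i \hookrightarrow M$ induces an injection $\iota_i \colon \pi_1(S_i) \to \pi_1(M)$ (well-defined up to conjugacy). For each $i$ and each $j \in \{1,2\}$, the composition $\rho_j^i := \rho_j \circ \iota_i$ is a representation of $\pi_1(S_i)$ into ${\rm PSL}(2,\mathbb C)$. The first key step is to check that each $\rho_j^i$ is discrete and faithful, i.e.\ a Kleinian surface group: faithfulness is immediate from injectivity of $\iota_i$ and $\rho_j$, while discreteness of the restriction requires an argument — one can invoke the covering theorem / Thurston's boundary incompressibility results, or more directly note that since $M$ is acylindrical the cover of $\mathbb H^3/\rho_j(\pi_1(M))$ corresponding to $\pi_1(S_i)$ has incompressible boundary and the restriction is geometrically tame, hence discrete and faithful. (In the acylindrical case this is standard; alternatively, discreteness of a subgroup of a discrete group is automatic, so the only real content is that $\iota_i$ is injective, which is exactly the incompressibility of $\partial M$ guaranteed by acylindricity.)

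The second step is to observe that a class $\alpha \in \pi_1(S_i)$ represented by a simple closed curve on $S_i$ maps under $\iota_i$ to a class in $\pi_1(M)$ represented by a simple closed curve on $\partial M$, and $\ell_{\rho_j^i}(\alpha) = \ell_{\rho_j}(\iota_i(\alpha))$ by construction. Hence the hypothesis $\ell_{\rho_1} = \ell_{\rho_2}$ on simple boundary curves gives exactly that $\rho_1^i$ and $\rho_2^i$ have the same simple marked length spectrum. Applying Theorem \ref{main} to each boundary component, there is $g_i \in {\rm Isom}(\mathbb H^3)$ with $\rho_2^i = g_i \rho_1^i g_i^{-1}$; moreover, by composing with complex conjugation on one side if necessary, we may record for each $i$ whether the conjugating element is orientation-preserving (lies in ${\rm PSL}(2,\mathbb C)$) or orientation-reversing.

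The third and crucial step is to upgrade these per-component conjugacies to a single conjugacy of $\rho_1$ and $\rho_2$ (or $\bar\rho_2$) on all of $\pi_1(M)$. Here the idea is that an acylindrical $M$ has a unique hyperbolic structure on its boundary data in a strong sense: by Thurston's geometrization for acylindrical manifolds together with the rigidity that makes the boundary of the convex core determine the group, or more efficiently by a Mostow--type argument, a representation of $\pi_1(M)$ with $M$ acylindrical is determined up to conjugacy in ${\rm Isom}(\mathbb H^3)$ by its restrictions to the boundary subgroups together with the (coherent) isotopy/gluing data. Concretely: pick a boundary component $S_1$ and conjugate so that $\rho_2|_{\pi_1(S_1)} = \rho_1|_{\pi_1(S_1)}$; since $M$ is acylindrical, $\pi_1(S_1)$ has trivial centralizer in ${\rm Isom}(\mathbb H^3)$ and in fact the action of $\pi_1(S_1)$ already ``sees'' the whole limit set, so any further conjugating element for the ambient group must centralize $\rho_1(\pi_1(S_1))$, hence is trivial; then one shows directly that $\rho_1 = \rho_2$ on all of $\pi_1(M)$ by checking agreement on a generating set using the Klein combination / amalgamation structure of $\pi_1(M)$ over its boundary pieces, i.e.\ $\pi_1(M)$ is generated by the boundary subgroups (true when $\partial M$ is incompressible and $M$ is irreducible with the appropriate handle structure) or, failing that, by invoking that the limit set of $\rho_1(\pi_1(S_1))$ coincides with that of $\rho_j(\pi_1(M))$ and using equivariance of the boundary map.

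I expect the main obstacle to be precisely this last gluing step: Theorem \ref{main} is applied separately to each boundary component and produces \emph{a priori} independent conjugating isometries, so the work is to show they can be chosen consistently and that agreement on the boundary subgroups forces agreement on $\pi_1(M)$. The cleanest route is probably to use that acylindricity implies $\pi_1(M)$ acts on $\mathbb H^3$ with the same limit set as any single boundary subgroup's image (so no extra centralizer ambiguity) and that $\pi_1(M)$ is generated by (conjugates of) its boundary subgroups; then fixing $\rho_2 = \rho_1$ on one boundary subgroup pins down the conjugating isometry uniquely, and one checks the remaining relations. The orientation bookkeeping — ensuring one does not end up needing $\rho_2$ on one component and $\bar\rho_2$ on another — is handled by observing that $M$ is orientable and connected, so a global orientation of $M$ induces coherent orientations on all of $\partial M$, forcing the conjugating isometries to have the same orientation type.
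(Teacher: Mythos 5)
Your first two steps coincide with the paper's: restrict $\rho_1,\rho_2$ to the boundary subgroups and apply Theorem \ref{main} on each component. The third step, however, is where all the content lies, and neither mechanism you propose for it works. Knowing that $\rho_1=\rho_2$ on $\pi_1(S_1)$ and that for every other component $S_j$ the restrictions are conjugate by \emph{some} unknown isometry gives no control over those conjugators, and no centralizer or ``generated by boundary subgroups'' argument can promote this to agreement on $\pi_1(M)$ (generation is neither established nor sufficient: the conjugators on the other components remain free). Your fallback claim that the limit set of $\rho_1(\pi_1(S_1))$ coincides with that of $\rho_1(\pi_1(M))$ is false in the main case of interest: for a convex cocompact acylindrical group the limit set is a Sierpinski-carpet-like set, while the boundary subgroup's limit set is a quasicircle. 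The assertion that an acylindrical $\pi_1(M)$-representation is determined up to conjugacy by its boundary restrictions is essentially the theorem itself, and the paper proves it with real machinery: Bonahon's tameness and Johannson's theorem produce a homeomorphism $M_1\to M_2$ in the homotopy class of $\rho_2\circ\rho_1^{-1}$, the boundary restrictions determine Thurston's end invariants (via the covers $N_i^j$ and the Covering Theorem), and the Ending Lamination Theorem then converts equal end invariants into conjugacy.

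The orientation issue is likewise a genuine case, not bookkeeping, and your resolution of it is incorrect. The per-component conjugating isometries come from length-spectrum rigidity, not from any map of $M$, so a global orientation of $M$ places no constraint on whether they preserve or reverse orientation; on a component where the restriction is Fuchsian (or otherwise symmetric) it is conjugate to both $\rho_2|_{\pi_1(S_j)}$ and $\bar\rho_2|_{\pi_1(S_j)}$, and even after replacing $\rho_2$ by $\bar\rho_2$ globally one may a priori be left with some components matching $\rho_2$ and others matching $\bar\rho_2$. The bulk of the paper's proof handles exactly this mixed case: it reorders the components, uses the Covering Theorem to show the relevant boundary covers are quasifuchsian with interchanged end invariants, doubles $M$ along the offending components using the Klein--Maskit combination theorems to build a hyperbolic structure on the double, and applies the Ending Lamination Theorem to the double to realize the natural involution by an isometry, from which the desired conjugacy follows by restriction. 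A minor further point: boundary components of a compact acylindrical hyperbolizable manifold can be tori, so ``each component of $\partial M$ has genus at least two'' is unjustified; the paper works with the non-toroidal part $\partial_0 M$ of the boundary.
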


\subsection{Isometries of the renormalized pressure intersection}
Burger \cite{burger-renorm} introduced a renormalized pressure intersection between
convex cocompact representations into rank one Lie groups. Bridgeman and Taylor \cite{BT08}
extensively studied this renormalized pressure intersection  in the setting of quasifuchsian
representation. We say that \hbox{$\rho:\pi_1(S)\to {\rm PSL}(2,\mathbb C)$} is {\em quasifuchsian}
if it is topologically conjugate, in terms of its action on $\widehat{\mathbb C}$,
to a Fuchsian representation into ${\rm PSL}(2,\mathbb R)$. If $T>0$ we let
$$R_T(\rho)=\{[\alpha]\in [\pi_1(S)]\ |\ \ell_\rho(\alpha)\le T\}$$
where $[\pi_1(S)]$ is the set of conjugacy classes in $\pi_1(S)$. We define the {\em entropy}
$$h(\rho)=\limsup\frac{\log(\#(R_T(\rho)))}{T}$$
of a quasifuchsian representation $\rho$. Sullivan \cite{sullivan-entropy} showed that
$h(\rho)$ is the Hausdorff dimension of the limit set of $\rho(\pi_1(S))$. 

Let $QF(S)$ denote the space of ${\rm PSL}(2,\mathbb C)$-conjugacy classes of quasifuchsian representations.
Bers \cite{bers-simultaneous} showed that $QF(S)$ is an analytic manifold which may
be naturally identified with $\mathcal{T}(S)\times\mathcal T(S)$. If $\rho_1,\rho_2\in QF(S)$,
the {\em renormalized pressure intersection} of $\rho_1$ and $\rho_2$ is given by
$$\JJ(\rho_1,\rho_2)=\frac{h(\rho_2)}{h(\rho_1)}\lim_{T\to\infty}\left(\frac{1}{\#(R_T(\rho))}\sum_{[\alpha]\in R_T(\rho_1)}
\frac{\ell_{\rho_2}(\alpha)}{\ell_{\rho_1}(\alpha)}\right).$$
Bridgeman  and Taylor \cite{BT08} showed that the Hessian of $J$ gives rise to a non-negative bilinear form
on  the tangent space${\sf{T}}QF(S)$ of quasifuchsian space, called the pressure form. 
Motivated by work of McMullen \cite{mcmullen-pressure} in the setting of Teichm\"uller space, Bridgeman \cite{Bri10}
used the thermodynamic formalism to show that
the only degenerate vectors for the pressure form correspond to pure
bending at points on the Fuchsian locus. Moreover, the pressure form gives rise to a path metric
on $QF(S)$, called the {\em pressure metric} (see also  \cite[Cor. 1.7]{BCLS}).

We say a smooth immersion $f:QF(S) \rightarrow QF(S)$ is a {\em smooth isometry  of the renormalized pressure intersection}  if
$$\JJ(f(\rho_1),f(\rho_2)) = \JJ(\rho_1,\rho_2)$$ for all $\rho_1,\rho_2\in QF(S)$.
We recall that the (extended) mapping class group ${\rm Mod}^*(S)$ is the group of
isotopy classes of homeomorphisms of $S$.
Since $J$ is invariant under the action of ${\rm Mod}^*(S)$, every
element of ${\rm Mod}^*(S)$ is a smooth isometry of the renormalized
pressure intersection. There exists an involution $\tau:QF(S)\to QF(S)$ given by taking $[\rho]$ to $[\bar\rho]$.
Since $\tau$ preserves the marked length spectrum, it is  an isometry of the renormalized
pressure intersection. We use our main result and work of Bonahon \cite{Bon88} to show
that these give rise to all smooth isometries of the renormalized pressure intersection.

\begin{theorem}
\label{isom}
If  $S$ is a closed, orientable  surface of genus at least two, then the group of
smooth isometries of the renormalized pressure intersection  on $QF(S)$ is generated by the (extended)
mapping class group ${\rm Mod}^*(S)$ and complex conjugation $\tau$.
\end{theorem}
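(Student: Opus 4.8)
\medskip\noindent\textbf{Proof sketch.}
The plan is to reduce Theorem~\ref{isom} to Theorem~\ref{main}. Let $g\ge 2$ be the genus of $S$ and let $f\colon QF(S)\to QF(S)$ be a smooth isometry of $\JJ$; since these form a group $f$ is a diffeomorphism, and since the pressure form is the Hessian of $\JJ$, the differential $df$ preserves the pressure form at every point. Write $\mathcal F\subset QF(S)$ for the Fuchsian locus; under the Bers parametrization $QF(S)\cong\mathcal T(S)\times\mathcal T(S)$ the space $QF(S)$ is a cell and $\mathcal F$ is a real-analytic submanifold. By Bridgeman's theorem the pressure form is nondegenerate on $QF(S)\setminus\mathcal F$ and has nontrivial kernel---spanned by the pure bending vectors---at each point of $\mathcal F$; as $df_\rho$ carries the kernel at $\rho$ isomorphically onto the kernel at $f(\rho)$, this forces $f(\mathcal F)=\mathcal F$.

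The next step is to pin down $f|_{\mathcal F}$. Every Fuchsian representation has entropy $1$ (Sullivan), and the Bowen--Margulis measure of a hyperbolic surface is a fixed multiple of its Liouville measure, so Bonahon's identity $i(L_X,c)=\ell_X(c)$ for geodesic currents $c$ shows that $\JJ|_{\mathcal F}$ is a universal positive multiple of Bonahon's intersection pairing $i(L_X,L_Y)$ of Liouville currents on $\mathcal T(S)$. Since by McMullen's theorem the pressure form restricts on $T\mathcal F$ to a multiple of the Weil--Petersson metric, $f|_{\mathcal F}$ is a Weil--Petersson isometry, hence is realized by some $\phi\in{\rm Mod}^*(S)$ (Masur--Wolf); alternatively, invariance of $i(L_\cdot,L_\cdot)$ lets one extend $f|_{\mathcal F}$ continuously to Thurston's boundary $\mathcal{PML}(S)$, where it preserves the relation $i(\mu,\nu)=0$ and so induces a (necessarily geometric) automorphism of the curve complex. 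Since $\phi_*$ is itself a smooth isometry of $\JJ$, we may replace $f$ by $\phi_*^{-1}\circ f$ and assume henceforth that $f|_{\mathcal F}=\mathrm{id}$.

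Now fix $\rho\in QF(S)$; I claim $\ell_{f(\rho)}(\alpha)=\ell_\rho(\alpha)$ for every simple closed curve $\alpha$. Using the thermodynamic formalism---equidistribution of closed $\rho$-geodesics toward the normalized Bowen--Margulis current $\mathbf m_\rho$ of $\rho$, together with the geodesic current $\mathbf c_\rho$ of $\rho$ defined by $i(\mathbf c_\rho,\gamma)=\ell_\rho(\gamma)$---one has, for $X\in\mathcal F$,
$$\JJ(\rho,X)=\frac{i(\mathbf m_\rho,L_X)}{h(\rho)}\qquad\text{and}\qquad \JJ(X,\rho)=\frac{2\,h(\rho)}{\pi^2\,|\chi(S)|}\,i(L_X,\mathbf c_\rho).$$
Given $\alpha$, degenerate $X$ within $\mathcal T(S)$ by pinching $\alpha$, so that $\lambda_X^{-1}L_X\to\alpha$ as geodesic currents for some $\lambda_X\to\infty$ depending only on $\alpha$ and $X$ (collar lemma); continuity of the intersection pairing then gives $\lambda_X^{-1}\JJ(\rho,X)\to h(\rho)^{-1}i(\mathbf m_\rho,\alpha)$ and $\lambda_X^{-1}\JJ(X,\rho)\to \tfrac{2}{\pi^2|\chi(S)|}\,h(\rho)\,\ell_\rho(\alpha)$. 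Because $f|_{\mathcal F}=\mathrm{id}$ and $f$ preserves $\JJ$, we have $\JJ(\rho,X)=\JJ(f(\rho),X)$ and $\JJ(X,\rho)=\JJ(X,f(\rho))$ for all $X\in\mathcal F$; since $\lambda_X$ depends only on $\alpha$ and $X$, comparing the limits gives $h(f(\rho))^{-1}i(\mathbf m_{f(\rho)},\alpha)=h(\rho)^{-1}i(\mathbf m_\rho,\alpha)$ and $h(f(\rho))\,\ell_{f(\rho)}(\alpha)=h(\rho)\,\ell_\rho(\alpha)$ for every $\alpha$. As these currents have full support they are determined by their intersection numbers with simple closed curves, so the first relation identifies $h(\rho)^{-1}\mathbf m_\rho$ with $h(f(\rho))^{-1}\mathbf m_{f(\rho)}$; comparing total masses recovers $h(\rho)=h(f(\rho))$, whence $\ell_{f(\rho)}(\alpha)=\ell_\rho(\alpha)$ for every $\alpha$. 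Since $\rho$ and $f(\rho)$ are Kleinian surface groups with the same simple marked length spectrum, Theorem~\ref{main} gives that $f(\rho)$ is conjugate in ${\rm PSL}(2,\mathbb C)$ to $\rho$ or to $\bar\rho$, i.e. $f(\rho)=\rho$ or $f(\rho)=\tau(\rho)$.

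Finally, the sets $U_1=\{\rho:f(\rho)=\rho\}$ and $U_2=\{\rho:f(\rho)=\tau(\rho)\}$ are closed and cover $QF(S)$ by the previous step, while $U_1\cap U_2\subseteq P:={\rm Fix}(\tau)$, a totally real real-analytic submanifold of real codimension $6g-6\ge 2$ in $QF(S)$. Since $QF(S)$ is a cell, $QF(S)\setminus P$ is connected, so one of $U_1\setminus P,\ U_2\setminus P$ is empty; and $f(\rho)=\rho$ for every $\rho\in P$, since there $f(\rho)\in\{\rho,\tau(\rho)\}=\{\rho\}$. Hence either $f=\mathrm{id}$ on all of $QF(S)$ or $f=\tau$ on all of $QF(S)$, so the original $f$ lies in the group generated by ${\rm Mod}^*(S)$ and $\tau$; the reverse inclusion was observed in the introduction. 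I expect the main obstacle to be the third step: converting values of $\JJ$ into marked lengths and entropies requires the thermodynamic formalism to express $\JJ(\rho,X)$ and $\JJ(X,\rho)$ through intersections of the currents $\mathbf m_\rho$ and $\mathbf c_\rho$ with Liouville currents (with the correct normalizations), Bonahon's continuity of the intersection pairing under the pinching degeneration, and a marked-length-spectrum rigidity statement for the full-support currents involved; granting these, Theorem~\ref{main} supplies the conclusion directly.
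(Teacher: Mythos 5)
Your steps (1), (2) and your final connectedness argument follow the paper's proof closely: preservation of the pressure form forces $f$ to preserve the Fuchsian locus, Masur--Wolf produces a mapping class agreeing with $f$ there, and the limiting argument along a pinching sequence (a sequence of Fuchsian points whose projective Liouville/Patterson--Sullivan currents converge to $[C_\alpha]$, using continuity of the length function) yields exactly the relation $h(f(\rho))\,\ell_{f(\rho)}(\alpha)=h(\rho)\,\ell_\rho(\alpha)$ for all simple closed curves $\alpha$; this is precisely the paper's Proposition \ref{krel}. Your closing step, showing that the sets where $f(\rho)=\rho$ and where $f(\rho)=\tau(\rho)$ cannot both be nonempty off the Fuchsian locus, is a more careful version of the paper's one-line appeal to continuity and is fine.

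The genuine gap is in how you eliminate the entropy factor. You claim that the companion relation $h(\rho)^{-1}i(\mathbf{m}_\rho,\alpha)=h(f(\rho))^{-1}i(\mathbf{m}_{f(\rho)},\alpha)$ for all simple $\alpha$ identifies the two normalized Patterson--Sullivan (Bowen--Margulis) currents because ``currents with full support are determined by their intersection numbers with simple closed curves,'' and that ``comparing total masses'' then gives $h(\rho)=h(f(\rho))$. Neither assertion is justified: a geodesic current is not known to be determined by its intersection numbers with simple closed curves alone (that kind of simple-spectrum rigidity is exactly the difficulty this paper is about, and no such statement for full-support currents is available to you), and the two currents are normalized against different representations ($L(\psi(\rho),\rho)=1$ versus $L(\psi(f(\rho)),f(\rho))=1$), so no ``total mass'' comparison produces the entropy equality. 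In addition, your formula for $\JJ(X,\rho)$ presupposes a geodesic current $\mathbf{c}_\rho$ with $i(\mathbf{c}_\rho,\gamma)=\ell_\rho(\gamma)$ for quasifuchsian $\rho$; no such current is known to exist, and you should instead work with the Bridgeman--Taylor length function $L(\cdot,\rho)$, as the paper does (this part is repairable). The paper closes the entropy gap differently: it proves Theorem \ref{ksimple rigidity}, that two Kleinian surface groups whose simple marked length spectra are proportional (simply $k$-related) are conjugate in the isometry group of $\Hs$ -- a separate and nontrivial asymptotic eigenvalue/trace argument ruling out $k\neq 1$ -- and applies it to $f(\sigma)$ and $g(\sigma)$. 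If you replace your entropy-equality argument by an appeal to Theorem \ref{ksimple rigidity} (or supply a proof of it), the remainder of your proposal goes through and coincides in outline with the paper's proof.
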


\medskip\noindent
Royden \cite{royden} showed that ${\rm Mod}^*(S)$ is the isometry group of the Teichm\"uller metric on $T(S)$. 
Masur and Wolf \cite{masur-wolf} proved that ${\rm Mod}^*(S)$ is the isometry group of the Weil-Petersson metric on $T(S)$.
Bridgeman \cite{Bri10} used work of Wolpert \cite{Wol86} to show that  
the restriction of the pressure form to the Fuchsian locus is a multiple of the Weil-Petersson metric.

One may thus view Theorem \ref{isom} as evidence in favor of the following natural conjecture.

\medskip\noindent
{\bf Conjecture:} {\em The isometry group of the pressure metric on quasifuchsian space $QF(S)$ is
generated by the (extended) mapping class group and complex conjugation.}

\medskip

In the proof of Theorem \ref{isom}, we establish the following strengthening of our
main result which may be of independent interest.

\begin{theorem}
\label{ksimple rigidity}
If $S$ is a closed, connected, orientable surface of genus at least two,
\hbox{$\rho_1:\pi_1(S)\to {\rm PSL}(2,\mathbb C)$} and 
\hbox{$\rho_2:\pi_1(S)\to {\rm PSL}(2,\mathbb C)$} are Kleinian surface groups, and
there exists $k$ so that 
and $\ell_{\rho_1}(\alpha)=k\ell_{\rho_2}(\alpha)$ for all $\alpha\in\pi_1(S)$ which are represented
by simple curves on $S$,
then $\rho_1$ is  conjugate to either $\rho_2$ or $\bar\rho_2$.
\end{theorem}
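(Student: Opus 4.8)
The plan is to deduce this rescaled version from Theorem \ref{main} by showing that the scaling constant $k$ must equal $1$. First I would observe that if $k=1$ there is nothing new: the hypothesis says $\rho_1$ and $\rho_2$ have the same simple marked length spectrum, and Theorem \ref{main} applies directly. So the heart of the matter is to rule out $k\ne 1$; without loss of generality assume $k>1$ (otherwise interchange $\rho_1$ and $\rho_2$). The strategy is to produce a quantity attached to a Kleinian surface group that is simultaneously (i) determined by the simple marked length spectrum, and (ii) \emph{not} homogeneous of degree one under rescaling of that spectrum, so that the relation $\ell_{\rho_1}(\alpha)=k\,\ell_{\rho_2}(\alpha)$ on simple curves forces a contradiction unless $k=1$.

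The natural candidate is an entropy-type invariant. Recall that for a Kleinian surface group $\rho$ there is a well-defined critical exponent / entropy $h(\rho)>0$, which by Sullivan's theorem equals the Hausdorff dimension of the limit set, and which always satisfies $h(\rho)\ge 1$ with equality precisely on the Fuchsian locus. The key structural input is that this entropy can be recovered from the simple marked length spectrum alone: simple closed curves are ``coarsely dense'' enough in $\pi_1(S)$ — via subsurface projections, or via the fact that suitable powers and products remain realized by simple curves after passing to finite covers, or via the Bridgeman--Canary--Labourie--Sambarino orbit-counting machinery restricted to the simple-curve subsystem — that $\limsup_{T\to\infty}\frac{1}{T}\log\#\{[\alpha]\ \text{simple}:\ \ell_\rho(\alpha)\le T\}$ recovers $h(\rho)$. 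Granting this, the hypothesis $\ell_{\rho_1}=k\,\ell_{\rho_2}$ on simple curves immediately gives $h(\rho_1)=\tfrac{1}{k}\,h(\rho_2)$, i.e. $h(\rho_2)=k\,h(\rho_1)$. Combined with $h(\rho_1)\ge 1$ and $h(\rho_2)\ge 1$ this alone does not yet force $k=1$, so I would need a second, complementary constraint.

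The complementary constraint comes from running the same argument with the roles reversed in a length-renormalized way, or more robustly from the following observation: one can also extract the \emph{bottom} of the simple length spectrum. Let $m(\rho)=\inf\{\ell_\rho(\alpha):\alpha\ \text{simple}\}$, the length of the shortest simple closed geodesic. Both $\rho_1$ and $\rho_2$ have positive, finite such infima (the shortest closed geodesic is always realized by a curve that can be taken simple in a surface group, and Margulis gives a positive lower bound), and the hypothesis yields $m(\rho_1)=k\,m(\rho_2)$. Now I would invoke a Kleinian-group inequality relating $m$ and $h$: by a Bers-type/collar argument, a short simple geodesic forces the entropy to be close to (but at least) $1$ — more precisely there is a universal function bounding $h(\rho)-1$ in terms of $m(\rho)$ going to zero as $m(\rho)\to 0$ — while conversely length-spectrum gaps are controlled below. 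Playing $h(\rho_2)=k\,h(\rho_1)$ against $m(\rho_1)=k\,m(\rho_2)$ and the monotone relationship between $h$ and $m$ pins $k=1$; then Theorem \ref{main} finishes the proof, giving $\rho_1$ conjugate to $\rho_2$ or $\bar\rho_2$.

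The main obstacle I anticipate is step (i): rigorously showing that the \emph{simple} marked length spectrum determines the entropy $h(\rho)$ (and, in the variant above, that it controls $m(\rho)$ in a way compatible with the entropy comparison). Simple closed curves are sparse in $\pi_1(S)$, so the usual orbit-counting asymptotics do not transfer verbatim; one must exploit that products of simple curves supported on disjoint subsurfaces, together with curves that become simple in finite covers $S'\to S$ (where $h$ is unchanged), are plentiful enough to force the exponential growth rate to agree with the full one. An alternative route that sidesteps entropy entirely: use that the simple marked length spectrum determines the marked length spectrum of \emph{every} essential subsurface's convex core (via the Masur--Minsky / curve-complex description), hence determines each end invariant up to the Ending Lamination Theorem, and directly rules out a nontrivial global rescaling because subsurface lengths cannot all scale by the same factor while remaining realized by a Kleinian surface group — this is likely the cleanest path and is the one I would pursue if the entropy argument proves delicate.
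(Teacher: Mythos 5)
Your reduction (show $k=1$, then quote Theorem \ref{main}) is the right skeleton, and it is in fact how the paper proceeds, but the mechanism you propose for forcing $k=1$ has a fatal gap at exactly the point you flag as the "main obstacle," and the anticipated fixes do not repair it. The claim that $h(\rho)$ is recovered by $\limsup_{T\to\infty}\frac1T\log\#\{[\alpha]\ \mathrm{simple}: \ell_\rho(\alpha)\le T\}$ is false: simple closed geodesics grow only polynomially in length (of order $T^{6g-6}$, by Mirzakhani in the Fuchsian case, and the count is comparable for any Kleinian surface group since lengths differ from a fixed Fuchsian metric by a quasi-isometry constant on the finitely many mapping-class-group orbits of simple curves), so that $\limsup$ is $0$ for every $\rho$ and carries no information about $h(\rho)$. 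The suggested remedies do not help: elements that become simple in a finite cover are generally not simple in $S$, so the hypothesis says nothing about their lengths, and no subsurface-projection or BCLS counting argument restricted to the simple-curve "subsystem" is available. Your second constraint is also not a true statement: there is no universal bound making $h(\rho)-1$ small when the shortest simple geodesic is short; quasifuchsian groups with a very short, highly twisted curve have limit-set dimension close to $2$, not close to $1$. And even granting both relations $h(\rho_2)=k\,h(\rho_1)$ and $m(\rho_1)=k\,m(\rho_2)$, nothing pins $k=1$ without such a (false) monotone link between $h$ and $m$. The closing "alternative route" via subsurfaces and the Ending Lamination Theorem is too vague to assess and, as stated, does not explain why a global rescaling of the simple spectrum is impossible.

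For comparison, the paper rules out $k>1$ by a completely elementary and local computation: pick simple non-separating $\alpha,\beta$ meeting once with hyperbolic images, put both representations in $(\alpha,\beta)$-normalized form, and use the asymptotic expansion of $\log$ of the top eigenvalue of $\rho_i(\alpha^n\beta)$ (Lemma \ref{basic comp}). The relation $\log\mu_1(n)=k\log\mu_2(n)$ forces, after matching leading terms and rescaling by $|\lambda_2|^{2n}$, that $\Re\left(u^n\,\frac{b_2c_2}{a_2^2}\right)\to 0$ with $u$ on the unit circle; a short case analysis (irrational versus rational rotation, then the reality of $\Tr^2(\rho_2(\alpha^n\beta))$ when $\lambda_2^2$ is real) shows $b_2c_2=0$, contradicting Lemma \ref{basic comp}. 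If you want to salvage your write-up, replacing the entropy/systole argument by this kind of eigenvalue asymptotics along the family $\alpha^n\beta$ (all of which are simple, so the hypothesis applies to each) is the step you are missing.
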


Kim \cite[Thm. 3]{kim-rigid} showed that if $\rho_1$ and $\rho_2$ are irreducible,
non-elementary, nonparabolic representations of a finitely presented group $\Gamma$ into the isometry
group of a rank one symmetric space and there exists $k>0$ such that $\ell_{\rho_1}(\gamma)=k\ell_{\rho_2}(\gamma)$
for all $\gamma\in\Gamma$ (where $\ell_{\rho_i}(\gamma))$
the translation length of
$\rho_i(\gamma)$),  then $k=1$ and
$\rho_1$ and $\rho_2$ are conjugate representations.

\medskip\noindent
{\bf Outline of paper:} In section \ref{complex length} we analyze the complex length spectrum of Kleinian
surface groups with the same simple marked length spectrum, then in section \ref{main proof}, we give the proof of
our main result. In section \ref{acylindrical} we prove Theorem \ref{acyl app}, while in section \ref{pressure} we establish
Theorems \ref{isom} and \ref{ksimple rigidity}

\medskip\noindent
{\bf Acknowledgements:} The authors would like to thank Maxime Wolff for several enlightening conversations
on the length spectra of surface group representations, Jeff Brock and Mike Wolf 
for conversations about the Weil-Petersson metric and Alan Reid for conversations about the character variety.
This material is partially based upon work supported by the National Science Foundation under grant No. 0932078 000 while
the authors were in residence at the Mathematical Sciences Research Institute in Berkeley, CA, during the
Spring 2015 semester.

\section{The complex length spectrum}
{\label{complex length}}

In this section, we investigate the complex length spectra of Kleinian surface groups with the
same simple marked length spectrum.

Given $\alpha \in \pi_1(S)$ and $\rho\in AH(S)$, let $\lambda^2_\rho(\alpha)$
be the square of the largest eigenvalue of $\rho(\alpha)$. Notice that $\lambda^2_\rho(\alpha)$ is well-defined
even though the largest eigenvalue of a matrix in ${\rm PSL}(2,\mathbb C)$ is only well-defined
up to sign. If we choose $\log\lambda^2_\rho(\alpha)$ to have imaginary part in $[0,2\pi)$, then
$\log\lambda^2_\rho(\alpha)$ is the {\em complex length} of $\rho(\alpha)$.

If $\alpha$ is a simple, non-separating closed curve
on $S$, we let $W(\alpha)$ denote the set of all simple, non-separating curves on $S$ which intersect $\alpha$
at most once.  We say that $\rho_1$ and $\rho_2$ have {\em the same marked complex length spectrum} on
$W(\alpha)$ if $\lambda^2_{\rho_1}(\beta) = \lambda^2_{\rho_2}(\beta)$ for all $\beta \in W(\alpha)$. Similarly,
we say that  $\rho_1$ and $\rho_2$ have {\em conjugate marked complex length spectrum} on
$W(\alpha)$ if $\lambda^2_{\rho_1}(\beta) =\overline{\lambda^2_{\rho_2}(\beta)}$ for all $\beta\in W(\alpha)$.

We will show that if two Kleinian surface groups $\rho_1$ and $\rho_2$ have the same simple marked length spectrum,
then,  there exists a simple non-separating curve $\alpha$ on $S$ such that  $\rho_1$ and $\rho_2$ either have
the same or conjugate complex length spectrum on $W(\alpha)$.

\begin{prop}
\label{niceonW}
If $S$ is a closed, connected, orientable surface of genus at least two,
\hbox{$\rho:\pi_1(S)\to {\rm PSL}(2,\mathbb C)$} and 
\hbox{$\rho_2:\pi_1(S)\to {\rm PSL}(2,\mathbb C)$} are Kleinian surface groups with the same
simple marked length spectrum, then
there exists a  simple non-separating curve $\alpha$ on $S$ such that 
$\rho_1(\alpha)$ is hyperbolic and either
\begin{enumerate}
\item
$\rho_1$ and $\rho_2$ have the same marked complex length spectrum on $W(\alpha)$, or
\item 
$\rho_1$ and $\rho_2$ have conjugate marked complex length spectrum on $W(\alpha)$.
\end{enumerate}
\end{prop}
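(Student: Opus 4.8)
The plan is to exploit the relationship between the real translation length $\ell_\rho(\alpha)$ and the complex length $\log\lambda_\rho^2(\alpha)$: we have $\ell_\rho(\alpha)=2|\,{\rm Re}\log\lambda_\rho(\alpha)\,|$, or equivalently $|\lambda_\rho(\alpha)|=e^{\ell_\rho(\alpha)/2}$, so the hypothesis that $\rho_1$ and $\rho_2$ have the same simple marked length spectrum tells us precisely that $|\lambda_{\rho_1}^2(\beta)|=|\lambda_{\rho_2}^2(\beta)|$ for every simple closed curve $\beta$, and also that $\rho_1(\beta)$ is parabolic or elliptic iff $\rho_2(\beta)$ is. (In fact Kleinian surface groups have no elliptics, and by Thurston--Bonahon the ends are geometrically tame, so one should be able to produce simple non-separating $\alpha$ with $\rho_1(\alpha)$, hence $\rho_2(\alpha)$, hyperbolic; I would either quote this or argue directly that the set of simple non-separating curves on which a Kleinian surface group is parabolic is too small — it is carried by a measured lamination — to exhaust all of them, so some $\alpha$ works.) Thus the content of the proposition is upgrading ``same modulus of eigenvalue'' to ``same eigenvalue, or same up to conjugation'' on the family $W(\alpha)$.

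First I would fix such an $\alpha$ and consider curves $\beta\in W(\alpha)$. The key tool is a trace/length identity in a one-holed torus or four-holed sphere subsurface. Concretely: if $\beta$ and $\gamma$ are simple non-separating curves each meeting $\alpha$ at most once and meeting each other at most once, then $\alpha,\beta,\gamma$ (in suitable configurations) fill a one-holed torus, and in $\pi_1$ of that torus the product $\beta\gamma$ is again simple, as is $\beta\gamma^{-1}$, and the Fricke identity
$$\Tr(\beta\gamma)+\Tr(\beta\gamma^{-1})=\Tr(\beta)\Tr(\gamma)$$
holds. Knowing $|\Tr\delta|$ (equivalently $|\lambda^2_\rho(\delta)+\lambda^{-2}_\rho(\delta)+2|^{1/2}$, up to the usual subtleties) for all of $\beta,\gamma,\beta\gamma,\beta\gamma^{-1}$, which are all simple, constrains the complex numbers $\Tr_{\rho_1}(\beta),\Tr_{\rho_2}(\beta)$, etc., quite rigidly. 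Since complex conjugation is the only field automorphism of $\mathbb C$ compatible with these analytic identities and with preserving absolute values, I would argue that on the filling family there is a single choice — either all traces agree, or all are complex-conjugated — and that this choice is forced to be globally consistent because $W(\alpha)$ is connected through such ``overlapping torus'' relations. The passage from traces to the eigenvalue square $\lambda^2_\rho$ is then a matter of bookkeeping with the branch of the logarithm, again using that $|\lambda^2_{\rho_1}(\beta)|=|\lambda^2_{\rho_2}(\beta)|$ to pin down which root of $x+x^{-1}=\Tr$ is the large one.

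The structure of the argument is therefore: (i) reduce to showing the dichotomy holds for the eigenvalues of individual simple curves once it holds for traces; (ii) prove a local rigidity statement — for any two overlapping simple curves $\beta,\gamma\in W(\alpha)$, the pair $(\Tr_{\rho_1}\beta,\Tr_{\rho_1}\gamma)$ equals $(\Tr_{\rho_2}\beta,\Tr_{\rho_2}\gamma)$ or its conjugate — using the Fricke identity plus the known moduli of all four of $\Tr\beta,\Tr\gamma,\Tr\beta\gamma,\Tr\beta\gamma^{-1}$; (iii) propagate this local choice across all of $W(\alpha)$ using a connectivity/change-of-coordinates argument (the curve complex of the relevant subsurface, or explicitly a sequence of one-holed tori), checking the choice cannot flip; (iv) conclude. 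The main obstacle I expect is step (ii): knowing only absolute values of four traces related by one quadratic equation does not obviously determine the traces up to conjugation — one needs to use that there are many such relations simultaneously (varying $\gamma$), or to invoke a more global input such as Kourounitis-type rigidity or the fact that the character variety is cut out by these trace functions, to rule out exotic coincidences of moduli. Managing the parabolic/reducible degenerate cases of the Fricke identity, and the initial production of a good hyperbolic $\alpha$, are secondary technical points.
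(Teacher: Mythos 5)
There is a genuine gap, and it starts with the input data you feed into step (ii). The hypothesis of equal simple marked length spectra tells you that $|\lambda^2_{\rho_1}(\delta)|=|\lambda^2_{\rho_2}(\delta)|$ for every simple $\delta$, i.e.\ the moduli of the eigenvalues agree; it does \emph{not} tell you that $|\Tr\,\rho_1(\delta)|=|\Tr\,\rho_2(\delta)|$. Since $\Tr=\pm(\lambda+\lambda^{-1})$, the modulus of the trace also depends on the rotation angle of $\delta$, which is exactly the quantity the proposition is trying to control, so your parenthetical ``equivalently $|\lambda^2+\lambda^{-2}+2|^{1/2}$'' conflates the known datum with the unknown one. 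Consequently the Fricke-identity step has no usable hypothesis: you do not know the moduli of any of the four traces $\Tr\beta,\Tr\gamma,\Tr\beta\gamma,\Tr\beta\gamma^{-1}$. Even granting that data, you yourself flag that one quadratic relation among four moduli does not force ``equal or conjugate,'' and the suggested rescue (quote Kourounitis-type rigidity, or the structure of the character variety) is essentially assuming a statement of the same strength as the one to be proved. Finally, your dichotomy misses a third possibility that genuinely occurs at the level of a single curve and must be dealt with: from equal lengths one can only conclude $\lambda^2_{\rho_1}(\delta)=\lambda^2_{\rho_2}(\delta)$, $=\overline{\lambda^2_{\rho_2}(\delta)}$, or $=-\lambda^2_{\rho_2}(\delta)$ with both real, and ruling out (or absorbing) the sign case, together with making the equal-vs-conjugate choice consistent across $W(\alpha)$, is where most of the work lies; a vague connectivity argument through overlapping tori does not address it, and indeed when all simple non-separating complex lengths are real the correct conclusion is outright equality, proved by a different argument than in the non-real case.

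The paper's mechanism, which your outline lacks, is the use of the infinite family $\alpha^n\beta$ (all simple, all in $W(\alpha)$, for $\beta$ meeting $\alpha$ once) together with an asymptotic expansion: after normalizing $\rho_i(\alpha)$ to be diagonal with eigenvalue $\lambda_i$ and writing $\rho_i(\beta)$ with entries $a_i,b_i,c_i,d_i$ (all nonzero by discreteness), one has
\[
\log\mu_i(n)= n\log|\lambda_i| + \log|a_i| + \Re\!\left( \lambda_i^{-2n}\frac{b_ic_i}{a_i^{2}}\right) + O(|\lambda_i|^{-4n}),
\]
so equating lengths for all $n$ and letting $n\to\infty$ extracts the unknowns term by term ($|a_1|=|a_2|$, then a statement about the rotation angles via the elementary limit lemma on $\Re(u_1^nv_1-u_2^nv_2)\to 0$), yielding the trichotomy above; a second round of the same asymptotics applied to $\Tr^2(\rho_i(\alpha^n\beta))$ handles the totally real case and the non-real case separately, and the curve $\alpha$ with $\rho_1(\alpha)$ hyperbolic is produced by Sullivan's finiteness theorem (only finitely many simple curves have parabolic image), which is cleaner than your lamination remark. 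If you want to salvage your approach you would need some substitute for this limiting device that converts ``equal eigenvalue moduli on infinitely many simple curves'' into equations on matrix entries; a single Fricke identity with unknown trace moduli cannot do that.
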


Proposition \ref{niceonW} will be a nearly immediate consequence of three lemmas.
The first lemma shows that for two Kleinian surface groups with the same length spectrum, then the
complex lengths of a simple non-separating curve either agree, differ by complex conjugation, or
differ by sign (and are both real). The second lemma deals with the case where the complex length of
every simple, non-separating curve is real, while the final lemma handles the case where some complex
length is not real. All the proofs revolve around an analysis of the asymptotic behavior of 
complex lengths of curves of the form
$\alpha^n\beta$ where $\alpha$ and $\beta$ intersect exactly once.
We begin by recording computations which will be used repeatedly in the remainder of
the paper.

\subsection{A convenient normalization}
We recall that two elements $\alpha,\beta\in\pi_1(S)$ are {\em coprime} if they share no
common powers. We say that  a representation $\rho:\pi_1(S)\to {\rm PSL}(2,\mathbb C)$ is 
{\em $(\alpha,\beta)$-normalized} if $\alpha,\beta\in\pi_1(S)$ are coprime and $\rho(\alpha)$ 
is hyperbolic and has attracting fixed point $\infty$ and repelling fixed point $0$.
In this case, 
$$\rho(\alpha)= \pm \left( \begin{array}{cc}
\lambda & 0\\
0 &   \lambda^{-1}\end{array}\right) 
$$
where $|\lambda|> 1$, and
$$\rho(\beta)= \pm \left( \begin{array}{cc}
a & b\\
c &  d \end{array}\right).$$
where $ad-bc= 1.$
Notice that the matrix representations of elements of ${\rm PSL}(2,\mathbb C)$ are only well-defined
up to multiplication by $\pm I$, but many related quantities like the square of the trace, the product
of any two co-efficients, and the modulus of the eigenvalue of maximal modulus are well-defined.

\begin{lemma}
\label{basic comp}
Suppose that $S$ is a closed, connected, orientable surface of genus at least two and
\hbox{$\rho:\pi_1(S)\to {\rm PSL}(2,\mathbb C)$} is an $(\alpha,\beta)$-normalized Kleinian surface group.
In the above notation,
$$\rho(\alpha^n\beta)= \pm \left( \begin{array}{cc}
\lambda^na & \lambda^{n}b\\
\lambda^{-n}c &  \lambda^{-n}d \end{array}\right)$$
and all the  matrix coefficients of $\rho(\beta)$ are non-zero.
Moreover, if $\mu(n)$ is the modulus of the eigenvalue of $\rho(\alpha^n\beta)$ with largest modulus,
then
$$\log\mu(n)= n\log|\lambda| + \log|a| +   \Re\left( \lambda^{-2n}\frac{bc}{a^{2}}\right) + O(|\lambda|^{-4n}).$$
\end{lemma}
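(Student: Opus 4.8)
The goal is to establish the claimed matrix identity for $\rho(\alpha^n\beta)$ and then extract the asymptotic expansion of $\log\mu(n)$, the log of the modulus of the larger eigenvalue. I will take the following approach.

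\textbf{Step 1: The matrix product.} Since $\rho$ is $(\alpha,\beta)$-normalized, $\rho(\alpha^n) = \pm\left(\begin{smallmatrix}\lambda^n & 0\\ 0 & \lambda^{-n}\end{smallmatrix}\right)$, so multiplying on the left by $\rho(\beta) = \pm\left(\begin{smallmatrix}a & b\\ c & d\end{smallmatrix}\right)$ gives the stated form for $\rho(\alpha^n\beta)$ directly. To see that all matrix coefficients of $\rho(\beta)$ are non-zero: if $b = 0$ or $c = 0$ then $\rho(\beta)$ is upper or lower triangular and hence shares a fixed point ($\infty$ or $0$) with $\rho(\alpha)$; since $\rho$ is discrete and faithful and $\alpha,\beta$ are coprime, sharing a single fixed point forces $\rho(\alpha)$ and $\rho(\beta)$ to generate a non-discrete or elementary group (two hyperbolic elements with exactly one common fixed point generate a group containing parabolics and fails discreteness), contradicting that $\rho$ is a Kleinian surface group. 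If $a = 0$, then $\mathrm{tr}^2\rho(\alpha^n\beta) = (\lambda^n a + \lambda^{-n} d)^2 = \lambda^{-2n} d^2$, and since also $ad - bc = 1$ gives $-bc = 1$, one checks that for large $n$ the element $\rho(\alpha^n\beta)$ has trace-squared tending to $0$, so it is eventually elliptic or parabolic; but $\alpha^n\beta$ is represented by a simple closed curve (as $\alpha,\beta$ intersect once) whose image under a Kleinian surface group must be hyperbolic — contradiction. Similarly $d = 0$ is excluded by applying the same argument to $\rho(\beta\alpha^n)$ or by symmetry.

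\textbf{Step 2: The eigenvalue asymptotics.} Write $t(n) = \mathrm{tr}\,\rho(\alpha^n\beta) = \lambda^n a + \lambda^{-n} d$ (well-defined up to sign). The larger eigenvalue $\Lambda(n)$ satisfies $\Lambda(n) + \Lambda(n)^{-1} = t(n)$ and $\Lambda(n) = \tfrac{1}{2}\bigl(t(n) + \sqrt{t(n)^2 - 4}\bigr)$ for the appropriate branch. Since $|\lambda| > 1$, we have $t(n) = \lambda^n a\,(1 + \tfrac{d}{a}\lambda^{-2n})$, which grows like $|\lambda|^n$, so $\Lambda(n) = t(n) - \Lambda(n)^{-1}$ and $\Lambda(n)^{-1} = O(|\lambda|^{-n})$, giving $\Lambda(n) = t(n)\bigl(1 + O(|\lambda|^{-2n})\bigr)$; iterating once more, $\Lambda(n) = t(n) - t(n)^{-1} + O(|\lambda|^{-3n})= \lambda^n a + \lambda^{-n}d - \lambda^{-n}a^{-1} + O(|\lambda|^{-3n})$. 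Factoring out $\lambda^n a$:
$$\Lambda(n) = \lambda^n a\left(1 + \lambda^{-2n}\frac{d}{a} - \lambda^{-2n}\frac{1}{a^2} + O(|\lambda|^{-4n})\right) = \lambda^n a\left(1 + \lambda^{-2n}\frac{ad - 1}{a^2} + O(|\lambda|^{-4n})\right).$$
Using $ad - 1 = bc$, this becomes $\Lambda(n) = \lambda^n a\bigl(1 + \lambda^{-2n}\tfrac{bc}{a^2} + O(|\lambda|^{-4n})\bigr)$. Taking $\log$ of moduli, $\log\mu(n) = \log|\Lambda(n)| = n\log|\lambda| + \log|a| + \log\bigl|1 + \lambda^{-2n}\tfrac{bc}{a^2} + O(|\lambda|^{-4n})\bigr|$, and since $\log|1 + z| = \Re(z) + O(|z|^2)$ for small $z$, the error term $O(|\lambda|^{-4n})$ absorbs the quadratic contribution, yielding $\log\mu(n) = n\log|\lambda| + \log|a| + \Re\bigl(\lambda^{-2n}\tfrac{bc}{a^2}\bigr) + O(|\lambda|^{-4n})$, as claimed.

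\textbf{Main obstacle.} The computational core (Steps 1 and the algebra of Step 2) is routine. The genuine content is the non-vanishing of the matrix coefficients of $\rho(\beta)$, which is where discreteness and faithfulness of the Kleinian surface group enter; in particular one must be careful that $\alpha^n\beta$ (or a suitable relative of it) is genuinely represented by a simple closed curve in order to conclude its image is hyperbolic, and that two hyperbolic elements of a Kleinian group sharing exactly one fixed point is impossible. Once that structural input is in place, the expansion follows by bookkeeping of the branch of the square root and elementary Taylor estimates, with all neglected terms of order $|\lambda|^{-4n}$.
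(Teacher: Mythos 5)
Your Step 2 and the matrix identity are fine and essentially reproduce the paper's computation (the paper expands $\sqrt{t^2-4}$ by Taylor series where you iterate $\Lambda=t-\Lambda^{-1}$, but these are the same bookkeeping), and your treatment of the cases $b=0$ and $c=0$ is in the spirit of the paper's argument. The genuine gap is in your exclusion of $a=0$ (and $d=0$), which is precisely the coefficient the whole expansion divides by. You argue that $\mathrm{tr}^2\rho(\alpha^n\beta)=\lambda^{-2n}d^2\to 0$ forces these elements to be ``eventually elliptic or parabolic,'' but that inference is false: an element of ${\rm PSL}(2,\mathbb C)$ with small \emph{complex} trace is loxodromic (with short translation length and rotation angle near $\pi$); only a real trace-squared in $[0,4)$ gives an elliptic. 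Your backup claim that the image of a simple closed curve under a Kleinian surface group ``must be hyperbolic'' is also false — simple closed curves can have parabolic image (cusps/accidental parabolics; Sullivan's finiteness theorem only bounds how many). Nor can you rescue the argument by saying the translation lengths of the $\rho(\alpha^n\beta)$ tend to $0$: for a fixed Kleinian surface group (e.g.\ a degenerate one with unbounded geometry) closed geodesic lengths can accumulate at $0$, so length decay alone yields no contradiction with discreteness.

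The paper closes all four cases uniformly, and you could too: if any coefficient of $\rho(\beta)$ vanishes, then $\rho(\beta)$ carries a fixed point of $\rho(\alpha)$ to a fixed point of $\rho(\alpha)$ (e.g.\ if $a=0$ then $\rho(\beta)(\infty)=0$, if $d=0$ then $\rho(\beta)(0)=\infty$), so the loxodromic $\rho(\beta\alpha\beta^{-1})$ shares a fixed point with $\rho(\alpha)$. In a discrete group two loxodromics sharing one fixed point must share both (otherwise a conjugation trick produces a non-constant convergent sequence in the group), and then they lie in a common elementary subgroup and have a common power; this contradicts faithfulness together with the fact that $\langle\alpha,\beta\rangle$ is free of rank two. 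With that substitution your proof matches the paper's.
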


\begin{proof}
The first claim follows from a simple computation. If any of the coefficients of $\rho(\beta)$ are 0,
then $\rho(\beta)$ takes some fixed point of $\rho(\alpha)$ to a fixed point of $\rho(\alpha)$, e.g.
if $a=0$, then $\rho(\beta)(\infty)=0$. This would imply that $\rho(\beta\alpha\beta^{-1})$ shares a fixed
point with $\rho(\alpha)$. Since $\rho(\pi_1(S))$ is discrete, this would imply that there is an element
which is a power of both $\rho (\alpha)$ and $\rho(\beta\alpha\beta^{-1})$ which would contradict
the facts that $\rho$ is faithful and the subgroup of $\pi_1(S)$ generated by $\alpha$ and $\beta$ is free of rank two.

The eigenvalues of $\rho(\alpha^n\beta)$, which are only well-defined up to sign, are then given by
\begin{eqnarray*}
\pm \left(  \frac{ (\lambda^{n}a +  \lambda^{-n}d) \pm \sqrt{( \lambda^{n}a +  \lambda^{-n}d)^{2}-4}}{2} \right)
\end{eqnarray*}
So, since $|\lambda|>1$, for all large enough $n$, one may use the Taylor expansion for $\sqrt{1+x}$ to conclude that
they have the form
\begin{eqnarray*}
\pm \left(\lambda^{n}a\left(1 +  \lambda^{-2n}\left(\frac{ad-1}{a^{2}}\right) + O(\lambda^{-4n})\right)\right)
\end{eqnarray*}
Therefore, since $ad-bc=1$,
\begin{eqnarray*}
\log\mu(n) &=& n\log|\lambda| + \log|a| + \log\left|  1 +  \lambda^{-2n}\frac{bc}{a^{2}} + O(\lambda^{-4n})\right|. 
\end{eqnarray*}
We then use the expansion of $\log|1 + z|$ about $z=0$ given by
$$\log|1 + z| = \frac{1}{2}\log(|1+z|^2) = \frac{1}{2}\log(1+ 2\Re(z) + |z|^2) = \Re(z) + O(|z|^2)$$
to show that
\begin{eqnarray*}
\log\mu(n) &=& n\log|\lambda| + \log|a| +   \Re\left( \lambda^{-2n}\frac{bc}{a^{2}}\right) + O(|\lambda|^{-4n}).
\end{eqnarray*}
\end{proof}

\subsection{Basic relationships between complex lengths}
Our first lemma shows that if two Kleinian surface groups have the same simple marked length spectrum,
then the complex lengths of any simple, non-separating curve either agree or differ by either complex conjugation or
sign.

\begin{lemma}
\label{tri}
If $\rho_1$ and $\rho_2$ in $AH(S)$ have the same simple marked length spectrum
and $\alpha$ is a simple non-separating curve on $S$, then either
\begin{enumerate}
\item
$\lambda^2_{\rho_1}(\alpha)  = \lambda^2_{\rho_2}(\alpha)$, 
\item
$\lambda^2_{\rho_1}(\alpha)= \overline{\lambda^2_{\rho_2}(\alpha)}$, or
\item
$\lambda^2_{\rho_1}(\alpha) = -\lambda^2_{\rho_2}(\alpha)$ and $\lambda^2_{\rho_1}(\alpha)$ is real.
\end{enumerate}
\end{lemma}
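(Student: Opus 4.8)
The idea is to exploit the asymptotic expansion in Lemma \ref{basic comp} along a family of simple closed curves of the form $\alpha^n\beta$. Since $\alpha$ is simple and non-separating, I can choose a simple closed curve $\beta$ intersecting $\alpha$ exactly once; then $\alpha^n\beta$ is again a simple closed curve for every $n\in\Z$, and $\alpha,\beta$ are coprime and generate a free rank-two subgroup. After conjugating, I may assume $\rho_1$ is $(\alpha,\beta)$-normalized, and after a further conjugation of $\rho_2$ (which does not affect its length spectrum) I may assume $\rho_2$ is also $(\alpha,\beta)$-normalized; write the matrix entries of $\rho_1$ as $\lambda,a,b,c,d$ and those of $\rho_2$ as $\mu,a',b',c',d'$ in the notation of the previous subsection. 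The hypothesis gives $\log\mu_1(n)=\log\mu_2(n)$ for all $n$, where $\mu_i(n)$ is the maximal-modulus eigenvalue of $\rho_i(\alpha^n\beta)$.

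First I would compare the leading (linear-in-$n$) terms of the two expansions from Lemma \ref{basic comp}: equality for all large $n$ forces $\log|\lambda|=\log|\mu|$, i.e. $|\lambda|=|\mu|$, and then the constant terms give $|a|=|a'|$. Next, subtracting, the two oscillating terms must agree:
$$\Re\!\left(\lambda^{-2n}\frac{bc}{a^2}\right)=\Re\!\left(\mu^{-2n}\frac{b'c'}{a'^2}\right)+O(r^{-4n})$$
for all large $n$, where $r=|\lambda|=|\mu|>1$. Writing $\lambda^{-2}=r^{-2}e^{i\theta}$ and $\mu^{-2}=r^{-2}e^{i\phi}$, and $bc/a^2=\rho e^{i\psi}$, $b'c'/a'^2=\rho' e^{i\psi'}$ (all nonzero by Lemma \ref{basic comp}), this says $\rho\cos(n\theta+\psi)=\rho'\cos(n\phi+\psi')$ for all large $n$. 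A short Fourier/equidistribution argument on the sequence $n\mapsto(n\theta,n\phi)\bmod 2\pi$ — splitting into the cases where $\theta$ is irrational vs. rational multiple of $2\pi$ — forces $\rho=\rho'$ and then either $e^{i\theta}=e^{i\phi}$ with $\psi\equiv\psi'$, or $e^{i\theta}=e^{-i\phi}$ with $\psi\equiv-\psi'$ (the degenerate subcase $\theta\equiv0$, i.e. $\lambda^2$ real, is exactly where conclusions (1) and (2) can fail and (3) enters). Translating back: $\theta=\pm\phi$ gives $\lambda^2=\mu^2$ or $\lambda^2=\overline{\mu^2}$, and since $\lambda^2_{\rho_i}(\alpha)=\lambda^2$, $\mu^2$ up to the usual sign ambiguity, this yields cases (1), (2), or — when $\lambda^2$ is real — (3).

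The main obstacle is the Fourier-matching step: I must rule out accidental coincidences of the two single-frequency cosine sequences evaluated only at integers $n\ge N$, handling both the irrational-rotation case (where the orbit is equidistributed on a circle or 2-torus and one gets a clean identity of trigonometric polynomials) and the rational case (where one only has finitely many sample points and must argue more carefully, possibly passing to a common period or using that the $O(r^{-4n})$ error is summable and hence negligible). A secondary subtlety is bookkeeping the $\pm$ ambiguity of eigenvalues in $\mathrm{PSL}(2,\CC)$: one should track $\lambda^2_{\rho_i}(\alpha)$ rather than $\lambda$ itself, which is why the trichotomy includes the "differ by sign, both real" alternative. Everything else is routine manipulation of the expansion already established in Lemma \ref{basic comp}.
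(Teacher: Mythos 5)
Your plan follows essentially the same route as the paper: expand $\log\mu_i(n)$ for the simple curves $\alpha^n\beta$ via Lemma \ref{basic comp}, match leading and constant terms to get $|\lambda_1|=|\lambda_2|$ and $|a_1|=|a_2|$, rescale the oscillating terms by $|\lambda|^{2n}$, and run the irrational-versus-rational rotation dichotomy (packaged in the paper as Lemma \ref{rotate}) to obtain the trichotomy on $\lambda^2$. Two small points to tidy: the case where $\rho_1(\alpha)$ is parabolic must be disposed of separately (then both $\lambda^2$'s equal $1$, so the normalization you invoke does not apply), and in the rational-rotation case one should only claim the angle relation $e^{i\theta}=e^{\pm i\phi}$ or $u_1,u_2\in\{\pm1\}$ --- the amplitude/phase matching $\rho=\rho'$, $\psi\equiv\pm\psi'$ can fail there (e.g.\ when $u_1=u_2=1$ one only gets $\Re(v_1)=\Re(v_2)$), but it is not needed for the conclusion.
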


\begin{proof}
If $\rho_1(\alpha)$ is parabolic, then $\rho_2(\alpha)$ is parabolic (since $\ell_{\rho_2}(\alpha)=\ell_{\rho_1}(\alpha)=0$).
In this case,  $\lambda^2_{\rho_1}(\alpha)=\lambda^2_{\rho_2}(\alpha)=1$. So we may assume that $\rho(\alpha)$
is hyperbolic.

Let $\beta$ be a curve intersecting $\alpha$ exactly once.
We may assume that both $\rho_1$ and $\rho_2$ are $(\alpha,\beta)$-normalized, so
$$\rho_i(\alpha) =\pm  \left( \begin{array}{cc}
\lambda_i & 0\\
0 &   \lambda_i^{-1}\end{array}\right) 
$$ where $|\lambda_i|> 1$, and
$$\rho_i(\beta) = \pm \left( \begin{array}{cc}
a_i & b_i\\
c_i &  d_i \end{array}\right).$$
where $a_id_i-b_ic_i = 1.$

Since,  $\rho_1$ and $\rho_2$ have the same simple marked length spectrum and $|\lambda_1| = |\lambda_2|$,
Lemma \ref{basic comp} implies that  
$$\log |a_1|+ \Re\left( \lambda_1^{-2n}\frac{b_1c_1}{a_1^{2}}\right) + O(|\lambda_1|^{-4n}) = \log |a_2|+ \Re\left( \lambda_2^{-2n}\frac{b_2c_2}{a_2^{2}}\right) + O(|\lambda_2|^{-4n})$$
for all $n$.
Taking limits as $n\to\infty$, we see that $\log |a_1|=\log |a_2|$, so
$$\Re\left( \lambda_1^{-2n}\frac{b_1c_1}{a_1^{2}}\right) + O(|\lambda_1|^{-4n}) = \Re\left( \lambda_2^{-2n}\frac{b_2c_2}{a_2^{2}}\right) + O(|\lambda_2|^{-4n})$$
for all $n$. Therefore, after multiplying both sides by $|\lambda_1|^{2n}=|\lambda_2|^{2n}$, we see that
$$\lim_{n \rightarrow \infty}\Re\left(u_1^{n}v_1 - u_2^{n}v_2\right) =0$$
where
$$u_i = \left(\frac{\lambda_{i}}{|\lambda_{i}|}\right)^{-2}  \quad\textrm{and}\quad v_i = \frac{b_ic_i}{a_i^2} \neq 0.$$

Lemma \ref{tri} is then an immediate consequence of the following elementary lemma.
\end{proof}

\begin{lemma}{\label{rotate}}
If $u_1, u_2 \in {\mathbb S}^1$, $v_1, v_2 \in {\mathbb C}-\{0\}$ and
$$\lim_{n \rightarrow \infty}\Re\left(u_1^{n}v_1 - u_2^{n}v_2\right) =0,$$
then either
\begin{enumerate}
\item
$ u_1 = u_2$,
\item
$ u_1 = \overline{u}_2$, or 
\item
$u_1 = - u_2 = \pm 1$.
\end{enumerate}
\end{lemma}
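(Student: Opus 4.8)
The plan is to study the sequence $a_n = \Re(u_1^n v_1 - u_2^n v_2)$ and exploit its convergence to $0$. First I would dispose of the degenerate cases where some $u_i$ is a root of unity, or where $u_1$ or $u_2$ equals $\pm 1$; in those cases the corresponding sequence $\Re(u_i^n v_i)$ is periodic (or eventually constant), and matching the two periodic patterns forces one of the three listed conclusions by direct inspection. So I may assume each $u_i \in \Sph^1$ is not a root of unity, hence $\{u_i^n\}$ is dense (in fact equidistributed) on $\Sph^1$.

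The heart of the argument is the following observation. Write $u_1^n v_1 - u_2^n v_2 = z_n$. We know $\Re(z_n) \to 0$. Consider the closure of the set $\{(u_1^n, u_2^n) : n \in \N\}$ in the torus $\Sph^1 \times \Sph^1$. If $u_1$ and $u_2$ are \emph{multiplicatively independent} (no relation $u_1^j u_2^k = 1$ with $(j,k) \neq (0,0)$), then by Weyl's theorem this closure is the whole torus, so we can find a subsequence along which $u_1^{n} \to w_1$ and $u_2^{n} \to w_2$ for \emph{any} prescribed $(w_1, w_2) \in \Sph^1 \times \Sph^1$; taking limits gives $\Re(w_1 v_1 - w_2 v_2) = 0$ for all such pairs, which is absurd (fix $w_2$ and vary $w_1$ to make $\Re(w_1 v_1)$ take two distinct values since $v_1 \neq 0$). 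Hence $u_1$ and $u_2$ satisfy a multiplicative relation $u_1^j u_2^k = 1$. Writing $u_i = e^{2\pi i \t_i}$, this says $j\t_1 + k\t_2 \in \Z$ with $(j,k)\neq(0,0)$; combined with $u_i$ not a root of unity one checks $j,k$ are both nonzero and one may reduce to $u_1^p = u_2^q$ for coprime nonzero integers $p,q$.

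Now with a multiplicative relation in hand, the closure of $\{(u_1^n,u_2^n)\}$ is a proper closed subgroup of the torus, i.e.\ a finite union of translates of a one-parameter subgroup; restricting to the relevant arithmetic progression of indices, $u_1^n$ and $u_2^n$ move together, and the relation $\Re(u_1^n v_1 - u_2^n v_2) \to 0$ becomes, along a dense set of points $w \in \Sph^1$, an identity $\Re(w^{p'} v_1' - w^{q'} v_2') = 0$ for appropriate $v_i'$ and exponents. A nonzero real-analytic function of the form $\Re(c_1 w^{a} + c_2 w^{b})$ on $\Sph^1$ cannot vanish on a dense set unless it vanishes identically, and analyzing when $\Re(c_1 e^{ia\t} + c_2 e^{ib\t}) \equiv 0$ forces $\{a\} = \{-b\}$ as multisets together with $c_1 = \bar c_2$ in the matched case (or $a = b = 0$), which translates back to $u_1 = u_2$, $u_1 = \bar u_2$, or $u_1 = -u_2 = \pm 1$. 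I expect the main obstacle to be the bookkeeping in this last step: carefully extracting from the $\liminf$/$\limsup$ behavior of the \emph{real part} alone (rather than of $z_n$ itself) the precise algebraic conclusion, and handling the interplay between the periodic part coming from the multiplicative relation and the ``irrational rotation'' part cleanly. An alternative, more hands-on route that avoids equidistribution: use the fact that $\Re(u^n v)$ for $u$ not a root of unity has $\limsup_n \Re(u^n v) = |v|$ and $\liminf = -|v|$, apply this to suitable linear combinations (e.g.\ run $n$ and compare with $n$ shifted so that $u_1^n$ returns near $1$ while tracking $u_2^n$), and squeeze out first $|v_1| = |v_2|$ and then the phase relations; this is the approach I would actually write up, as it is elementary and self-contained.
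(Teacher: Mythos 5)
Your main route is correct in outline but genuinely different from the paper's. You work on the two--torus: either $(u_1,u_2)$ generates a dense subgroup of ${\mathbb S}^1\times{\mathbb S}^1$ (Kronecker/Weyl), which kills the hypothesis since $v_1\neq 0$, or there is a multiplicative relation, in which case the orbit closure is a finite union of cosets of a one--parameter subgroup and you finish by Fourier analysis / linear independence of characters. The paper never looks at the joint orbit: it compares, for each $i$ separately, the accumulation sets of the real sequences $\Re(u_i^{n}v_i)$ along subsequences, splits according to whether $\theta_i$ is a rational or irrational multiple of $2\pi$, and in the irrational case uses the one--dimensional trick of choosing $n_k$ with $u_1^{n_k}\to \overline{v_1}/|v_1|$ and then shifting the index by one to extract $|v_1|\cos\theta_1=|v_2|\cos\theta_2$; in the rational case it restricts to arithmetic progressions mod $q_1q_2$ to get the exact equalities $\Re(u_1^r v_1)=\Re(u_2^r v_2)$ and then applies the three--term recurrence $\Re(u^{r+1}v)+\Re(u^{r-1}v)=2\cos\theta\,\Re(u^rv)$. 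The paper's argument is shorter and entirely elementary (no equidistribution on the torus, no classification of closed subgroups), and it does not need to distinguish multiplicatively dependent from independent pairs; your argument is heavier but more structural and would generalize to sums with more frequencies. Note that the ``alternative hands-on route'' you sketch at the end is essentially the paper's actual proof of the irrational case.

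Two places in your sketch need more care than ``direct inspection''. First, the case where both $u_i$ are roots of unity is exactly where alternative (3) comes from; after upgrading convergence to the exact identity $\Re(u_1^n v_1)=\Re(u_2^n v_2)$ for all $n$ you still must argue (via the recurrence, or via linear independence of the characters $n\mapsto z^n$) that this forces $u_1\in\{u_2,\overline{u}_2\}$ or $u_1=-u_2=\pm1$. Second, in the multiplicatively dependent case, restricting to a single arithmetic progression only yields the relation up to a root-of-unity twist: e.g.\ in the diagonal case you get $u_1=\zeta u_2$ with $\zeta^m=1$ and, from that one coset, $\zeta^r v_1=v_2$ for one value of $r$. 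To conclude $\zeta=1$ (i.e.\ the exact equality $u_1=u_2$ or $u_1=\overline{u}_2$ demanded by the lemma) you must run the same limit over all residue classes and compare: $\zeta^r v_1=v_2$ for every $r$ forces $\zeta=1$. Both points are fixable within your framework, so I regard the proposal as a sound, genuinely different proof strategy rather than as having a fatal gap.
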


\begin{proof}

We choose $\theta_i$  so that
$$u_i=e^{i\theta_i} $$

If  $s = \{n_k\}_{k=1}^\infty$  is a strictly increasing sequence of integers, let 
$S_i(s)$ be the set of accumulation points of  $\{\Re(u_i^{n_{k}}v_i)\}$.
Then, by assumption, 
$S_1(s) = S_2(s)$ for any sequence $s$. 

If $\theta_i$ is an irrational multiple of $2\pi$,  then $S_i(\N)$ is the interval $[-|v_i|,|v_i|]$.  
If  $\theta_i$ is a rational multiple of $2\pi$ 
then $S_i(\N)$ is finite. Therefore either (a) both $\theta_1$ and $\theta_2$  are irrational with $|v_1| = |v_2|$ or  (b) both $\theta_1$ and $\theta_2$ are rational multiples of $2\pi$.  We handle these two cases separately.

\medskip\noindent
{\bf Case (a):} {\bf Both $\theta_1$ and $\theta_2$ are  irrational multiples of $2\pi$
and $|v_1| = |v_2|$:}
Since $\theta_1$ is an irrational multiple of $2\pi$, there is a sequence $\{n_k\}$ such that 
$\lim_{ k\rightarrow \infty} e^{in_k\theta_1} = \frac{\overline{v_1}}{|v_1|}$.
Therefore,
$$|v_1| = \lim_{ k\rightarrow \infty} \Re(e^{in_k\theta_1}v_1) = \lim_{k\rightarrow \infty} \Re(e^{in_k\theta_2}v_2) = |v_2|,$$
so $\lim_{k\rightarrow \infty} e^{in_k\theta_2} = \frac{\overline{v}_2}{|v_2|}$.
If $\{m_k\} = \{n_k+1\}$, then
$$|v_1|\cos{\theta_1}  =\lim_{ k\rightarrow \infty} \Re(e^{im_k\theta_1}v_1) =  \lim_{ k\rightarrow \infty} \Re(e^{im_k\theta_2}v_2) 
= |v_2|\cos{\theta_2}.$$
Since $|v_1|=|v_2|\ne 0$,  it follows that $\theta_1 = \pm \theta_2$, so either $u_1 = u_2$ or $u_1 = \overline{u}_2$
and we are either in case (1) or in case (2).

\medskip\noindent
{\bf Case II:} {\bf Both $\theta_1$ and $\theta_2$ are  rational multiples of $2\pi$:} 
Let $\theta_i = 2\pi p_i/q_i$ where $0\le p_i < q_i$ and $p_i$ and $q_i$ are relatively prime (and $q_i=1$ if $p_i=0$).

If  $r\in\Z$ and $s_r=\{r + kq_1q_2\}$, then
$$S_1(s_r) = \{\Re(u_1^rv_1) \} = S_2(s_r) = \{\Re(u_2^rv_2) \},$$
so
$$\Re(u_1^rv_1) = \Re(u_2^rv_2)\qquad \mbox{ for all $r\in\Z$.}$$

If $\Re(u_1^rv_1) = \Re(u_2^rv_2) = 0$ for all $r\in\Z$, then  $u_1 =\pm u_2 = \pm 1$, and we are in either case (1) or   case (3). 

If $\Re(u_1^rv_1) = \Re(u_2^rv_2) \neq 0$ for some $r$, then
$$2\cos(\theta_1)\Re(u_1^rv_1) = \Re(u_1^{r+1}v_1) + \Re(u_1^{r-1}v_1) =  \Re(u_2^{r+1}v_2) + \Re(u_2^{r-1}v_2) = 2\cos(\theta_2)\Re(u_2^rv_2).$$
Since $\Re(u_1^rv_1) = \Re(u_2^rv_2) \neq 0$, this implies that
$\cos(\theta_1) = \cos(\theta_2)$, so $\theta_1 = \pm\theta_2$. Therefore, either $u_1 = u_2$ or $u_1 = \overline{u}_2$
and we are in either case (1) or case (2).

This completes the proof, since in all situations we have seen that either case (1), (2) or (3) occurs.
\end{proof}

\subsection{When the simple non-separating complex length spectrum is totally real}
We use a similar analysis to show that  if the complex lengths of every simple, non-separating
curve is real for two Kleinian surface groups with the same simple marked length spectrum,
then the complex lengths agree for every simple, non-separating curve.

\begin{lemma}
\label{if not real}
If $S$ is a closed, connected, orientable surface of genus at least two and
\hbox{$\rho_1:\pi_1(S)\to {\rm PSL}(2,\mathbb C)$} and 
\hbox{$\rho_2:\pi_1(S)\to {\rm PSL}(2,\mathbb C)$} are Kleinian surface groups with same
simple marked length spectrum,  then either
\begin{enumerate} 
\item 
there exists a simple non-separating curve $\gamma$  on $S$ such that $\lambda^2_{\rho_1}(\gamma) \not\in \R$, or
\item 
$\lambda^2_{\rho_1}(\gamma) = \lambda^2_{\rho_2}(\gamma)\in\R$ whenever
$\gamma$  is a simple non-separating curve on $S$.
\end{enumerate}
\end{lemma}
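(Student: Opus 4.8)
The plan is to argue by contradiction: suppose that $\lambda^2_{\rho_1}(\gamma)$ is real for every simple non-separating curve $\gamma$, but that there exists some simple non-separating $\gamma_0$ with $\lambda^2_{\rho_1}(\gamma_0) \neq \lambda^2_{\rho_2}(\gamma_0)$. Since the two Kleinian surface groups have the same simple marked length spectrum, Lemma \ref{tri} forces the pair $\lambda^2_{\rho_1}(\gamma_0), \lambda^2_{\rho_2}(\gamma_0)$ into one of its three cases. Cases (1) contradicts our assumption, case (2) would say $\lambda^2_{\rho_2}(\gamma_0) = \overline{\lambda^2_{\rho_1}(\gamma_0)} = \lambda^2_{\rho_1}(\gamma_0)$ since the latter is real, again a contradiction, so we are left with case (3): $\lambda^2_{\rho_1}(\gamma_0) = -\lambda^2_{\rho_2}(\gamma_0)$ with both real. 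In particular $\rho_1(\gamma_0)$ and $\rho_2(\gamma_0)$ are both hyperbolic, with the same translation length but complex lengths differing by the imaginary shift $i\pi$; equivalently the traces of $\rho_1(\gamma_0)$ and $\rho_2(\gamma_0)$ have the same modulus but opposite sign (one positive real, one negative real, up to the sign ambiguity in ${\rm PSL}(2,\mathbb C)$).

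Next I would pick a simple non-separating curve $\beta$ intersecting $\gamma_0$ exactly once and normalize both representations to be $(\gamma_0,\beta)$-normalized, writing $\rho_i(\gamma_0) = \pm\,{\rm diag}(\lambda_i,\lambda_i^{-1})$ and $\rho_i(\beta) = \pm\begin{pmatrix} a_i & b_i \\ c_i & d_i\end{pmatrix}$. The key point is that now $\lambda_1$ and $\lambda_2$ are both real (up to sign we may take $\lambda_1,\lambda_2 > 1$ since $\lambda_i^2$ is real and $|\lambda_i|>1$), and the case (3) condition $\lambda_1^2 = -\lambda_2^2$ is actually impossible for real $\lambda_i > 1$ — so in fact we need to be more careful: the right reading is that $\lambda^2_{\rho_i}(\gamma_0)$ being "real" in case (3) includes the possibility $\lambda^2 < 0$, i.e. $\gamma_0$ could be a so-called "$i\pi$-rotation" where the trace is purely imaginary. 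So I would instead exploit case (3) to pin down that $\Re\!\big((\lambda_1/|\lambda_1|)^{-2}\big) = -\,\Re\!\big((\lambda_2/|\lambda_2|)^{-2}\big)$, i.e. $u_1 = -u_2 = \pm 1$ in the notation of Lemma \ref{rotate}. Then I would apply Lemma \ref{basic comp} to the family $\gamma_0^n\beta$: each $\gamma_0^n\beta$ is again simple and non-separating (since $\beta$ meets $\gamma_0$ once), so $\lambda^2_{\rho_1}(\gamma_0^n\beta)$ is real for all $n$ by hypothesis. Writing out the expansion
$$\log\mu_1(n) = n\log|\lambda_1| + \log|a_1| + \Re\!\left(\lambda_1^{-2n}\frac{b_1c_1}{a_1^2}\right) + O(|\lambda_1|^{-4n}),$$
and the corresponding imaginary-part expansion, the constraint "$\lambda^2_{\rho_1}(\gamma_0^n\beta)\in\R$ for all large $n$" plus the already-established $\log|a_1|=\log|a_2|$ and $|\lambda_1|=|\lambda_2|$ from the matching length spectrum should force enough rigidity on $b_1c_1/a_1^2$ and the argument of $\lambda_1$ to conclude that $\lambda^2_{\rho_1}(\gamma_0)$ and $\lambda^2_{\rho_2}(\gamma_0)$ cannot in fact differ by sign, completing the contradiction.

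To make the imaginary-part analysis usable, I would first need the imaginary-part analogue of the last displayed formula in Lemma \ref{basic comp}: from the eigenvalue expression $\pm\lambda^n a\big(1 + \lambda^{-2n}\frac{ad-1}{a^2} + O(\lambda^{-4n})\big)$ one gets, modulo $\pi$,
$$\arg(\text{eigenvalue}) = n\arg(\lambda) + \arg(a) + \Im\!\left(\lambda^{-2n}\frac{bc}{a^2}\right) + O(|\lambda|^{-4n}).$$
Saying $\lambda^2_{\rho_1}(\gamma_0^n\beta)\in\R$ means this argument lies in $\tfrac{\pi}{2}\Z$ for all large $n$, which (since the correction terms go to $0$) forces $n\arg(\lambda_1)+\arg(a_1)$ to be eventually constant mod $\tfrac{\pi}{2}$, hence $\arg(\lambda_1)\in\tfrac{\pi}{2}\Z$, i.e. $\lambda_1^2$ is real — recovering consistency — and then also forces $\Im(\lambda_1^{-2n}b_1c_1/a_1^2)\to 0$ fast, which combined with $\lambda_1^{-2} = \pm 1$ gives $\Im(b_1c_1/a_1^2)=0$. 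Doing the same for $\rho_2$ and then combining with the real-part identity $\Re(u_1^n v_1) = \Re(u_2^n v_2)$ from the proof of Lemma \ref{tri} (here with $u_i = \pm 1$), one checks directly that $v_1 = v_2$ and $u_1 = u_2$, so that $\lambda_1^2 = \lambda_2^2$ — contradicting case (3). The main obstacle I anticipate is the bookkeeping around the sign/$\pi$ ambiguities in ${\rm PSL}(2,\mathbb C)$ (the square of the largest eigenvalue is well-defined but its square root is not, and "$\lambda^2$ real" must be allowed to mean either sign), together with making sure that the family $\{\gamma_0^n\beta\}$ genuinely supplies simple non-separating curves to which the hypothesis applies; once those are handled the analytic estimates are routine consequences of Lemma \ref{basic comp} and Lemma \ref{rotate}.
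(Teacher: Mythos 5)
Your argument is sound and reaches the right contradiction, but it is a genuinely different route from the paper's. Both proofs reduce to case (3) of Lemma \ref{tri} for a single curve ($\lambda^2_{\rho_1}(\gamma_0)=-\lambda^2_{\rho_2}(\gamma_0)$, both real and hyperbolic), pick $\beta$ meeting $\gamma_0$ once, and study the twisted family $\gamma_0^n\beta$ via the normalization of Lemma \ref{basic comp}. From there the paper compares the two representations through $t_i(n)={\rm Tr}^2(\rho_i(\gamma_0^n\beta))$, using the dichotomy $t_1(n)=t_2(n)$ or $t_1(n)=4-t_2(n)$ and a two-level even/odd subsequence analysis to force $a_1=0$ or $a_1d_1=1$ (hence $b_1c_1=0$), contradicting Lemma \ref{basic comp}. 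You instead exploit the reality hypothesis on $\rho_1$ alone: an argument-expansion analogue of Lemma \ref{basic comp} (which you correctly state and which is as routine as the paper's modulus expansion) shows that $\lambda^2_{\rho_1}(\gamma_0^n\beta)\in\R$ for all $n$ forces $v_1=b_1c_1/a_1^2\in\R$; combining this with the real-part limit identity $\lim\Re(u_1^nv_1-u_2^nv_2)=0$ from the proof of Lemma \ref{tri} and with $u_1=-u_2=\pm1$ then kills $v_1$. Your approach trades the paper's four-fold trace case analysis for one extra (easy) expansion and isolates exactly what is used: the reality constraint on $\rho_1$ plus a single length-comparison identity; the paper's version stays entirely inside trace identities it reuses later in Lemma \ref{unreal case}. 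Two write-up corrections: the phrase ``combined with $\lambda_1^{-2}=\pm1$'' should read $u_1=(\lambda_1/|\lambda_1|)^{-2}=\pm1$ (the eigenvalue itself has modulus greater than one); and the conclusion of the final combination is not ``$v_1=v_2$ and $u_1=u_2$'' (the latter is excluded by hypothesis) but rather, reading off even and odd $n$, $v_1=\Re(v_2)$ and $v_1=-\Re(v_2)$, hence $v_1=0$, contradicting the nonvanishing of the matrix coefficients in Lemma \ref{basic comp}. Also note, if you do run the imaginary-part analysis for $\rho_2$ as well, that $\lambda^2_{\rho_2}(\gamma_0^n\beta)\in\R$ needs the quick extra remark that all three cases of Lemma \ref{tri} give a real value when $\lambda^2_{\rho_1}(\gamma_0^n\beta)$ is real; in fact reality of $v_1$ alone already suffices.
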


\begin{proof}
Suppose that  1) does not hold, so $\lambda^2_{\rho_1}(\gamma) \in \R$ 
whenever $\gamma$ is a simple non-separating curve on $S$. Lemma \ref{tri}  then implies that 
$\lambda^2_{\rho_1}(\gamma) = \pm \lambda^2_{\rho_2}(\gamma)$ whenever 
$\gamma$ is a simple non-separating curve on $S$

Suppose that  there is a simple non-separating curve $\alpha$ such that 
$\lambda^2_{\rho_1}(\alpha) = - \lambda^2_{\rho_2}(\alpha)$. Notice that  if $\rho_1(\alpha)$ is parabolic, then
$\ell_{\rho_1}(\alpha)=0=\ell_{\rho_2}(\alpha)$, so $\lambda^2_{\rho_1}(\alpha) = 1= \lambda^2_{\rho_2}(\alpha)$.
Therefore, $\rho_1(\alpha)$, and hence $\rho_2(\alpha)$, must be hyperbolic

We  choose a  simple non-separating curve $\beta$ intersecting $\alpha$ exactly once.
We adapt the normalization and notation of Lemma \ref{tri}.
Lemma \ref{basic comp} implies that 
$$ t_i(n)={\rm Tr}^2(\rho_i(\alpha^n\beta)) = \lambda_i^{2n}a_i^2 + 2a_id_i + \lambda_i^{-2n}d_i^2=
\lambda^2_{\rho_i}(\alpha^n\beta)+2+\lambda^{-2}_{\rho_i}(\alpha^n\beta).$$ 
(Notice that  the trace ${\rm Tr}(\rho_i(\alpha^n\beta))$ of $\rho_i(\alpha^n\beta)$ is well-defined up to sign,
so ${\rm Tr}^2(\rho_i(\alpha^n\beta))$ is well-defined.)
Since $\lambda^2_{\rho_1}(\alpha^n\beta)=\pm\lambda^2_{\rho_2}(\alpha^n\beta)$ for all $n$, by assumption,
either (i) $t_1(n) = t_2(n)$ or  (ii) $t_1(n) = 4-t_2(n)$ for all $n$ and $\lambda_1^2=-\lambda_2^2$.

The proof divides into two cases.

\medskip\noindent
{\bf Case I: There is an infinite sequence $\{n_k\}$ of even integers so that $t_1(n_k) = t_2(n_k)$:}
Dividing by $\lambda_1^{2n_k}=\lambda_2^{2n_k}$ and taking limits we
see that 
$$a_1^2=\lim_{k\to\infty} a_1^2+\frac{2a_1d_1}{\lambda_1^{2n_k}} +\frac{d_1^2}{\lambda_1^{4n_k}}=
\lim_{k\to\infty} a_2^2+\frac{2a_2d_2}{\lambda_2^{2n_k}} +\frac{d_2^2}{\lambda_2^{4n_k}}=a_2^2.$$
It follows that
$$2a_1d_1+\frac{d_1^2}{\lambda_1^{2n_k}}=2a_2d_2+\frac{d_2^2}{\lambda_2^{2n_k}}$$ 
for all $n_k$, so, after again taking limits, we see that 
$$a_1d_1=a_2d_2.$$
Then, by considering the final term, we see that
$$d_1^2=d_2^2.$$

If there exists an infinite sequence $\{m_j\}$ of odd integers so that 
$t_1(m_j) = t_2(m_j)$ for all $m_j$, then
$$ \lambda_1^{2m_j}a_1^2 + 2a_1d_1 + \lambda_1^{-2m_j}d_1^2  = 
 -\lambda_1^{2m_j}a_1^2 + 2a_1d_1- \lambda_1^{-2m_j}d_1^2.$$
Then we may divide  each side by $\lambda_1^{2m_j}$ and pass to a limit to conclude that
$a_1^2 = -a_1^2$. This would imply that  $a_1 = 0$, which would contradict Lemma \ref{basic comp}.

On the other hand, if there exists an infinite sequence $\{m_j\}$ of odd integers so that 
$t_1(m_j) = 4-t_2(m_j)$ for all $m_j$, then
$$ \lambda_1^{2m_j}a_1^2 + 2a_1d_1 + \lambda_1^{-2m_j}d_1^2  = 
4-( -\lambda_1^{2m_j}a_1^2 + 2a_1d_1- \lambda_1^{-2m_j}d_1^2),$$
so
$$2a_1d_1 = 4-2a_1d.$$ 
Therefore, $a_1d_1 = 1$,  which implies  that $b_1c_1 = 0$, 
so  either $b_1 = 0$ or $c_1 = 0$, which again contradicts Lemma \ref{basic comp}.

\medskip\noindent
{\bf Case II: There is an infinite sequence $\{n_k\}$ of even integers so that
\hbox{$t_1(n_k) = 4-t_2(n_k)$}:}
We then argue, as in Case I, to show that
$$a_1^2 = -a_2^2\quad\textrm{and}\quad 2a_1d_1 = 4-2a_2d_2 \quad\textrm{and}\quad d_1^2 = -d_2^2.$$

If there exists an infinite sequence $\{m_j\}$ of odd integers so that 
$t_1(m_j) = t_2(m_j)$ for all $m_j$, then
$$ \lambda_1^{2m_j}a_1^2 +2a_1d_1 + \lambda_1^{-2m_j}d_1^2  = 
\lambda_1^{2m_j}a_1^2 +(4- 2a_1d_1)+ \lambda_1^{-2m_j}d_1^2.$$
So, $a_1d_1=1$, again giving a contradiction.

On the other hand, if there exists an infinite sequence $\{m_j\}$ of odd integers so that 
$t_1(m_j) = 4-t_2(m_j)$ for all $m_j$, then
$$ \lambda_1^{2m_j}a_1^2 +2a_1d_1 + \lambda_1^{-2m_j}d_1^2  =4-(\lambda_1^{2m_j}a_1^2 +(4- 2a_1d_1)+ \lambda_1^{-2m_j}d_1^2).$$
Dividing both sides by $\lambda_1^{2m_j}$ and passing to a limit, we conclude that
$a_1^2 = -a_1^2$, which is again  a contradiction.

Therefore, neither Case I or Case II can occur, so case  (2) must hold.
 \end{proof}

\subsection{When the  complex length is not  always totally real}
We now show that if $\lambda_{\rho_1}^2(\alpha)$ is not real, for some simple non-separating curve
$\alpha$, then $\lambda_{\rho_1}^2(\beta)$ and $\lambda^2_{\rho_2}(\beta)$ either
agree for all $\beta\in W(\alpha)$, or differ by complex conjugation for all $\beta\in W(\alpha)$.

\begin{lemma}
\label{unreal case}
Suppose that $S$ is a closed, connected, orientable surface of genus at least two and
\hbox{$\rho_1:\pi_1(S)\to {\rm PSL}(2,\mathbb C)$} and 
\hbox{$\rho_2:\pi_1(S)\to {\rm PSL}(2,\mathbb C)$} are Kleinian surface groups with same
simple marked length spectrum.  If
$\alpha$ is simple non-separating curve on $S$ such that $\lambda^2_{\rho_1}(\alpha) \not\in \R$, then either
\begin{enumerate}
\item
$\rho_1$ and $\rho_2$ have the same marked complex length spectrum on $W(\alpha)$, or
\item 
$\rho_1$ and $\rho_2$ have conjugate marked complex length spectrum on $W(\alpha)$.
\end{enumerate}
\end{lemma}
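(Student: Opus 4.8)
The plan is to run the same $\alpha^n\beta$ asymptotic machinery as in Lemmas \ref{tri} and \ref{if not real}, but now exploiting the hypothesis $\lambda^2_{\rho_1}(\alpha)\notin\R$ to pin down a relationship between $\rho_1$ and $\rho_2$ that propagates to all of $W(\alpha)$. First, since $\lambda^2_{\rho_1}(\alpha)$ is not real, case (3) of Lemma \ref{tri} is impossible for $\alpha$, so either $\lambda^2_{\rho_1}(\alpha)=\lambda^2_{\rho_2}(\alpha)$ or $\lambda^2_{\rho_1}(\alpha)=\overline{\lambda^2_{\rho_2}(\alpha)}$; these two cases are symmetric (replace $\rho_2$ by $\bar\rho_2$, which has the same simple marked length spectrum as $\rho_2$ since complex conjugation preserves translation lengths), so I may assume $\lambda^2_{\rho_1}(\alpha)=\lambda^2_{\rho_2}(\alpha)$ and aim to prove conclusion (1). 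Fix $\beta\in W(\alpha)$ intersecting $\alpha$ at most once; if $\beta$ is disjoint from $\alpha$ I will need a separate (easier) argument, so assume first that $\beta$ meets $\alpha$ exactly once and normalize so both $\rho_1,\rho_2$ are $(\alpha,\beta)$-normalized, with $\lambda_1^2=\lambda_2^2=\lambda^2$ (choosing signs), $\rho_i(\beta)=\pm\begin{pmatrix} a_i & b_i\\ c_i & d_i\end{pmatrix}$.

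Next I would extract the consequences of equal simple length spectrum on the family $\{\alpha^n\beta\}$ using Lemma \ref{basic comp}. From $\log\mu_1(n)=\log\mu_2(n)$ for all large $n$, first letting $n\to\infty$ gives $|a_1|=|a_2|$, and then matching the $\lambda^{-2n}$-terms gives $\Re(\lambda^{-2n} b_1c_1/a_1^2)=\Re(\lambda^{-2n} b_2c_2/a_2^2)$ for all $n$; since $\lambda^{-2}$ is not a root of unity and not real (as $\lambda^2\notin\R$), the powers $\lambda^{-2n}$ equidistribute on a circle, forcing $b_1c_1/a_1^2=b_2c_2/a_2^2$. I can squeeze more by also using the curves $\alpha^n\beta^2$, $\alpha^n\beta\alpha^m\beta$, etc., or more efficiently by using $\Tr^2(\rho_i(\alpha^n\beta))=\lambda^{2n}a_i^2+2a_id_i+\lambda^{-2n}d_i^2$: since $\lambda^2_{\rho_1}(\alpha^n\beta)=\pm\lambda^2_{\rho_2}(\alpha^n\beta)$ by Lemma \ref{tri}, and since $\lambda^{2n}$ is nonreal for suitable $n$ (indeed for all $n$, as $\lambda^2\notin\R$ and powers of a nonreal unit-modulus-times-real quantity stay off $\R$ generically — here I must be slightly careful, but a density/infinite-sequence argument as in Lemma \ref{if not real} handles it), I get $t_1(n)=t_2(n)$ for an infinite set of $n$, hence $a_1^2=a_2^2$, $a_1d_1=a_2d_2$, $d_1^2=d_2^2$. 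Combined with $b_1c_1/a_1^2=b_2c_2/a_2^2$ and $a_id_i-b_ic_i=1$, this yields $a_1^2=a_2^2$, $b_1c_1=b_2c_2$, $d_1^2=d_2^2$, and $a_1d_1=a_2d_2$, so the matrices $\rho_1(\beta)$ and $\rho_2(\beta)$ agree up to the ambiguities $a_i\mapsto -a_i$ (with $d_i\mapsto -d_i$) and the simultaneous rescaling $(b_i,c_i)\mapsto(tb_i,t^{-1}c_i)$ — the latter being exactly the residual freedom in the normalization fixing $0,\infty$. This shows $\rho_2$ is conjugate, by a diagonal matrix, to $\rho_1$ on the subgroup $\langle\alpha,\beta\rangle$, hence $\lambda^2_{\rho_1}(\beta)=\lambda^2_{\rho_2}(\beta)$, and more generally $\lambda^2_{\rho_1}(w)=\lambda^2_{\rho_2}(w)$ for every $w\in\langle\alpha,\beta\rangle$.

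Finally I would globalize from a single $\beta$ to all of $W(\alpha)$. The point is that for two curves $\beta,\beta'\in W(\alpha)$ meeting $\alpha$ once, the diagonal conjugator matching $\rho_2$ to $\rho_1$ is forced to be consistent: both are diagonal matrices centralizing $\rho_i(\alpha)$, and the above shows $\rho_2(\beta)=D\rho_1(\beta)D^{-1}$, $\rho_2(\beta')=D'\rho_1(\beta')D'^{-1}$; one then checks, using that $\langle\alpha,\beta\rangle$ and $\langle\alpha,\beta'\rangle$ generate a group on which $\rho_1$ is faithful and nonelementary, that $D=D'$ (up to sign), since a diagonal matrix commuting with a nonelementary group is $\pm I$. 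Hence $\lambda^2_{\rho_1}=\lambda^2_{\rho_2}$ on every $\beta\in W(\alpha)$ that meets $\alpha$ once; the curves disjoint from $\alpha$ are handled by picking an auxiliary curve meeting both and running the same comparison, or by noting every $\beta\in W(\alpha)$ lies in some $\langle\alpha,\gamma\rangle$ with $\gamma$ meeting $\alpha$ once. The main obstacle I anticipate is the bookkeeping around the $\pm$ and diagonal-rescaling ambiguities and, in the case analysis, ensuring that the nonreality of $\lambda^2$ genuinely rules out the "differ by sign'' branch of Lemma \ref{tri} uniformly along the sequences $\alpha^n\beta$ — this is where I expect the argument of Lemma \ref{if not real} (splitting into even/odd subsequences and deriving $a_1=0$ or $a_1d_1=1$ contradictions) to be reused essentially verbatim.
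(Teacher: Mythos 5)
Your overall strategy is the paper's: normalize, compare $\Tr^2(\rho_i(\alpha^n\beta))=\lambda^{2n}a_i^2+2a_id_i+\lambda^{-2n}d_i^2$ asymptotically, and treat curves disjoint from $\alpha$ via an auxiliary curve. But there is a genuine gap at the central step. You quote Lemma \ref{tri} as saying $\lambda^2_{\rho_1}(\alpha^n\beta)=\pm\lambda^2_{\rho_2}(\alpha^n\beta)$, which drops the conjugate branch: the actual trichotomy is $t_1(n)=t_2(n)$, $t_1(n)=\overline{t_2(n)}$, or $t_1(n)=4-t_2(n)$ with $t_1(n)$ real. After your (correct) reduction to $\lambda^2_{\rho_1}(\alpha)=\lambda^2_{\rho_2}(\alpha)\notin\R$, nothing you say excludes the scenario that $t_1(n)=\overline{t_2(n)}\neq t_2(n)$ for all but finitely many $n$, in which case your claimed infinite set of $n$ with $t_1(n)=t_2(n)$ need not exist and the limits giving $a_1^2=a_2^2$, $a_1d_1=a_2d_2$, $d_1^2=d_2^2$ are unavailable. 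This is precisely where the nonreality of $\lambda^2$ must do work a second time: the paper divides $t_1(n)=\overline{t_2(n)}$ by $\overline{\lambda^{2n}}$ and observes that $(\lambda^2/\overline{\lambda^2})^n a_1^2$ has no limit when $\lambda^2\neq\overline{\lambda^2}$ unless $a_1=0$, contradicting Lemma \ref{basic comp}. Your anticipated difficulty (ruling out the sign branch along $\alpha^n\beta$) is the easier of the two; the conjugate branch is the one your argument never touches. A smaller slip in the same vein: ``$\lambda^2\notin\R$'' does not imply $(\lambda/|\lambda|)^{-2}$ is not a root of unity (e.g.\ it could be $i$), so your equidistribution claim needs the rational-angle case handled as in Lemma \ref{rotate}; the conclusion $b_1c_1/a_1^2=b_2c_2/a_2^2$ survives, but it does not by itself yield $\lambda^2_{\rho_1}(\beta)=\lambda^2_{\rho_2}(\beta)$, so it cannot replace the missing trace analysis.

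The disjoint case is also not complete as sketched. Your fallback claim that every $\beta\in W(\alpha)$ disjoint from $\alpha$ lies in some $\langle\alpha,\gamma\rangle$ with $\gamma$ meeting $\alpha$ once is false: $\langle\alpha,\gamma\rangle$ is carried by a one-holed torus, and two disjoint, non-isotopic, non-separating curves cannot both lie in one. The auxiliary-curve route is the right one, but ``running the same comparison'' with base curve $\alpha^{n_0}\beta'$ (where $\beta'$ meets both $\alpha$ and $\beta$ once) requires knowing that some $\lambda^2_{\rho_1}(\alpha^{n_0}\beta')$ is nonreal, since the nonreality hypothesis is what powers the first part of the argument. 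The paper supplies the missing step: it first shows $\lambda^2_{\rho_1}(\alpha^{n}\beta')=\lambda^2_{\rho_2}(\alpha^{n}\beta')$ for all $n$ by the intersecting case, and then shows that if all these complex lengths were real, then $\lim_n\Im\bigl((\lambda^2/|\lambda|^2)^n (a_1')^2\bigr)=0$ would force $a_1'=0$, contradicting Lemma \ref{basic comp}. Without that dichotomy your globalization step does not go through (and, incidentally, the single-conjugator consistency you pursue for curves meeting $\alpha$ once is unnecessary for the lemma, which only asserts equality of complex lengths curve by curve).
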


\begin{proof}
Lemma \ref{tri} implies that either $\lambda^2_{\rho_1}(\alpha) = \lambda^2_{\rho_2}(\alpha)$ or 
$\lambda^2_{\rho_1}(\alpha) = \overline{\lambda^2_{\rho_2}(\alpha)}$. 
If $\lambda^2_{\rho_1}(\alpha) = \overline{\lambda^2_{\rho_2}(\alpha)}$, then we consider the
representation $\bar\rho_2$.
In this case, $\lambda^2_{\rho_2}(\gamma)=\overline{\lambda_{\bar\rho_2}^2(\gamma)}$ for all $\gamma\in\pi_1(S)$.
In particular, $\lambda^2_{\rho_1}(\alpha) = \lambda^2_{\overline{\rho}_2}(\alpha)$. 
Therefore,  it suffices to prove that  $\rho_1$ and $\rho_2$ have the same marked complex length spectrum on $W(\alpha)$
whenever $\lambda^2 = \lambda^2_{\rho_1}(\alpha) = \lambda^2_{\rho_2}(\alpha)$ and $\rho_1$ and $\rho_2$ have
the same simple marked length spectrum.
 
First, suppose that $\beta$ is a simple non-separating curve on $S$ which intersects $\alpha$ once. 
We adopt the normalization and notation of Lemmas \ref{tri} and \ref{if not real}, so
$$ t_i(n)=\Tr^2(\rho_i(\alpha^n\beta)) = \lambda^{2n}a_i^2 + 2a_id_i + \lambda^{-2n}d_i^2 $$
Lemma \ref{tri} implies that for any $n$, either (i) $t_1(n)=t_2(n)$, (ii) $t_1(n)=\overline{t_2(n)}$ or (iii)
$t_1(n)=4-t_2(n)$ and $t_1(n)$ is real.

If there is an infinite set of values of $n$ such that $t_1(n) = t_2(n)$,  then, by taking limits, we see that 
$$a_1^2 =a_2^2, \quad a_1d_1 = a_2d_2\quad\textrm{and}\quad d_1^2 = d_2^2.$$
It follows then that $t_1(n) = t_2(n)$ for all $n$.
Moreover, since $a_i$ and $d_i$ are non-zero, either $a_1=a_2$ and $d_1=d_2$ or $a_1=-a_2$ and $d_1=-d_2$, so
${\rm Tr}^2(\rho_1(\beta))={\rm Tr}^2(\rho_2(\beta))$ which implies that 
$\lambda^2_{\rho_1}(\beta) = \lambda^2_{\rho_2}(\beta)$.

If  there is an infinite set of values $n$ such that $t_1(n) = 4-t_2(n)$ with $t_i(n)$ real,  then
taking limits we have
$$a_1^2 = -a_2^2, \quad 2a_1d_1 = 4 - 2a_2d_2\quad\textrm{and}\quad d_1^2 = -d_2^2.$$
It follows that $t_1(n) =4 -t_2(n)$ for all $n$, so by Lemma \ref{tri}, $t_1(n)$ is real for all $n$.
Since $\Im(t_i(n)) = 0$ for all $n$,
$$\lim_{n\to\infty} \frac{\Im(t_i(n))}{|\lambda|^{2n}} =\lim_{n\to\infty} \Im\left(\left(\frac{\lambda^2}{|\lambda|^2}\right)^{n}a_i^2\right)= 0.$$
Since $a_i^2 \neq 0$, by Lemma \ref{basic comp}, this can only happen if 
$\frac{\lambda^2}{|\lambda|^2}= \pm 1$. Thus, $\lambda^2=\lambda_{\rho_1}(\alpha^2)$ is real, contradicting our assumption. 

Finally, if $t_1(n) = \overline{t_2(n)}$  for all but finitely many values of $n$, we may
divide the resulting equation by $\overline{\lambda^{2n}}$ and
take a limit, to see that  
$$\lim_{n \rightarrow \infty} \left(\frac{\lambda^2}{\overline{\lambda^2}}\right)^{n}a_1^2 = \overline{a_2^2} .$$
Since $\lambda^2 \neq \overline{\lambda^2}$, the limit does not exist unless $a_1 = 0$,
which again contradicts Lemma \ref{basic comp}.

Therefore, if $\beta\in W(\alpha)$ intersects $\alpha$ once, then
$\lambda^2_{\rho_1}(\beta) = \lambda^2_{\rho_2}(\beta)$.

Now suppose that $\beta$ is a simple non-separating curve on $S$ which does not intersect $\alpha$.
We choose $\beta'$ to be a simple non-separating curve  intersecting both $\alpha$ and $\beta$ once.

For all $n$, $\alpha^n\beta'\in W(\alpha)$ and  intersects $\alpha$ once.
By the first part of the argument,
$$\lambda^2_{\rho_1}(\alpha^n\beta')=\lambda^2_{\rho_2}(\alpha^n\beta')$$
for all $n$. 
If there exists $n_0$ so that $\lambda^2_{\rho_1}(\alpha^{n_0}\beta')$
is not real, then since $\beta\in W(\alpha^{n_0}\beta')$ and intersects $\alpha^{n_0}\beta'$ exactly once,
we may apply the above argument to show that 
$\lambda^2_{\rho_1}(\beta) = \lambda^2_{\rho_2}(\beta)$.

It remains to consider the case that $\lambda^2_{\rho_1}(\alpha^{n}\beta')$ is real for all $n$. 
Suppose that
$$\rho_i(\beta') = \pm \left( \begin{array}{cc}
a_i' & b_i'\\
c_i' &  d_i' \end{array}\right).$$
Again, by Lemma \ref{basic comp} all the matrix coefficients must be non-zero.
Since $\Im(\lambda^2_{\rho_1}(\alpha^n\beta'))=0$ for all $n$, $\Im(\Tr^2(\rho_1(\alpha^n\beta')))=0$ for all $n$,
so, after dividing the resulting equation by $|\lambda|^{2n}$, for all $n$, and passing to the limit we see that
$$\lim_{n\rightarrow \infty}  
\Im\left(\left(\frac{\lambda^2}{|\lambda|^2}\right)^{n}a_1'^2\right)= 0.$$
Since $\frac{\lambda^2}{|\lambda|^2}\notin\R$,
this implies that $a_1' = 0$, which is again a contradiction. 
Therefore, if $\beta\in W(\alpha)$ does not intersect $\alpha$, then 
$\lambda^2_{\rho_1}(\beta) = \lambda^2_{\rho_2}(\beta)$
 which completes the proof.
\end{proof}

\subsection{Assembly}
We can now easily assemble the proof of Proposition \ref{niceonW}.

\medskip\noindent
{\em Proof of Proposition \ref{niceonW}:}
If there exists a simple, non-separating curve $\alpha$ on $S$ so that $\lambda^2_{\rho_1}(\alpha)$ is not
real, then Proposition \ref{niceonW} follows immediately from Lemma \ref{unreal case}.
If $\lambda^2_{\rho_1}(\alpha)$ is real for every simple, non-separating curve $\alpha$ on $S$, then
Lemma \ref{if not real} implies that $\rho_1$ and $\rho_2$ have the same marked complex length spectrum on $W(\alpha)$
for any non-separating simple closed curve $\alpha$.
A result of Sullivan \cite{sullivan-finite} implies that there 
are only finitely many simple curves $\gamma$ on $S$ so that $\rho_1(\gamma)$ is parabolic,
so we may always choose $\alpha$ so that $\rho_1(\alpha)$ is hyperbolic.
\qed

\medskip\noindent
{\bf Remark:} An examination of the proofs  reveals that Proposition
\ref{niceonW} holds  whenever  the length spectra of $\rho_1$ 
and $\rho_2$ agree on all simple, non-separating curves.

\section{Simple Marked Length Spectrum Rigidity}
\label{main proof}

We are now ready for the proof of our main result. 

\medskip\noindent
{\bf Theorem \ref{main}:} {\rm (Simple length rigidity for Kleinian surface groups)} 
{\em If $S$ is a closed, connected,
orientable surface of genus at least two, and $\rho_1:\pi_1(S)\to {\rm PSL}(2,\mathbb C)$ and 
$\rho_2:\pi_1(S)\to {\rm PSL}(2,\mathbb C)$ are Kleinian surface groups with the same
simple marked length spectrum, then $\rho_1$ is conjugate to either  $\rho_2$ or $\bar\rho_2$.}

\medskip

We begin with a brief sketch of the proof. It follows from Lemma \ref{niceonW}  that, perhaps
after replacing $\rho_2$ with a complex conjugate representation,
there exists a simple, non-separating curve $\alpha$ so that $\rho_1(\alpha)$ is hyperbolic and 
$\rho_1$ and $\rho_2$  have the same marked complex length spectrum on $W(\alpha)$.
We then lift $\rho_1$ and $\rho_2$ to representations into ${\rm SL}(2,\mathbb C)$ which
have the same trace on a standard set of generators $\{\alpha_1,\beta_1,\ldots,\alpha_g,\beta_g\}$
where $\alpha=\alpha_1$, see Lemma \ref{lifts exist}. An analysis of the asymptotic behavior of the traces of 
$\alpha_j^n\beta$ allows us to conclude that the restriction of the lifts to any subgroup of the form 
$G_j=< \alpha_j,\beta_j>$ are conjugate, see Lemma \ref{equal on Gi}. A more intricate analysis of the same type
is then applied to show that if we conjugate the
lifts to agree on $G_j$, then, for any $k$, they either agree on $G_k$ or differ by conjugation by a lift of the rotation of order two
in the axis of the image of the commutator of $[\alpha_j,\beta_j]$, see Lemmas \ref{generators} and \ref{step one}.
The proof is then easily completed when the genus is greater than two, see Lemma \ref{general case}, but
a separate analysis is required when the genus is two, see Lemma \ref{genus two case}.

\medskip\noindent
{\em Proof of Theorem \ref{main}:}
Proposition \ref{niceonW} implies that there exists a 
a simple non-separating curve $\alpha$ such that
$\rho_1(\alpha)$ is hyperbolic and $\rho_1$ and $\rho_2$ have the same marked complex length spectrum on $W(\alpha)$.
If $\rho_1$ and $\rho_2$ have conjugate complex marked length spectrum on $W(\alpha)$, then 
$\rho_1$ and $\bar\rho_2$ have the same marked complex length spectrum on $W(\alpha)$.
Therefore, we may assume that $\rho_1$ and $\rho_2$ have the same marked complex length spectrum on
$W(\alpha)$.

We begin by choosing lifts whose traces agree on a standard set of generators which includes $\alpha$.
We will call $\mathcal S=\{\alpha_1,\beta_1,\ldots,\alpha_g,\beta_g\}$ a {\em standard set of generators} for $\pi_1(S)$ if
each $\alpha_j$ and $\beta_j$ is non-separating, $\pi_1(S) = <\alpha_j,\beta_j\ | \ \prod_{j=1}^g[\alpha_i,\beta_i] = id>$ and $i(\alpha_j, \beta_j) = 1$  for all $j$ and 
if $j \neq k$ then 
$$i(\alpha_j,\alpha_k) = i(\beta_j, \beta_k) = i(\alpha_j,\beta_k) =  0,$$
see Figure \ref{generators_fig}.  We say that two lifts $\tilde\rho_1$ and $\tilde\rho_2$ 
of $\rho_1$ and $\rho_2$ are {\em trace normalized} with respect to $\mathcal S$ if
\begin{enumerate}
\item
$\rho_1(\delta)$ is hyperbolic  for all $\delta\in\mathcal S$, and
\item
$\Tr(\tilde\rho_1(\delta)) =  \Tr(\tilde\rho_2(\delta))$ for all $\delta \in \mathcal S$. 
\end{enumerate}

 \begin{figure}[htbp] 
    \centering
    \includegraphics[width=4in]{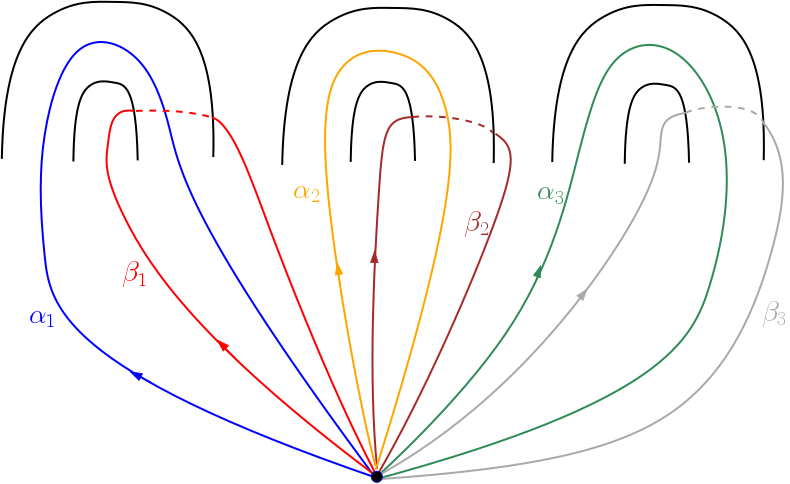} 
    \caption{Generators of $\pi_1(S,p)$}
    \label{generators_fig}
 \end{figure}

\begin{lemma}
\label{lifts exist}
Suppose that $\rho_1:\pi_1(S)\to {\rm PSL}(2,\mathbb C)$ and $\rho_2:\pi_1(S)\to {\rm PSL}(2,\mathbb C)$
are Kleinian surface groups with the same marked complex length spectrum on $W(\alpha)$ for some
simple, non-separating curve $\alpha$. If $\rho_1(\alpha)$ is hyperbolic,
then there exists a standard set of generators $\mathcal S$, so that $\alpha_1=\alpha$,
and lifts $\tilde\rho_1:\pi_1(S)\to {\rm SL}(2,\mathbb C)$ and $\tilde\rho_2:\pi_1(S)\to {\rm SL}(2,\mathbb C)$
of $\rho_1$ and $\rho_2$ which are trace normalized with respect to $\mathcal S$.
\end{lemma}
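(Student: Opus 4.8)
The plan has three ingredients: that $\rho_1$ and $\rho_2$ admit lifts to ${\rm SL}(2,\mathbb C)$ at all, that a standard set of generators with $\alpha_1=\alpha$ can be chosen so that every element has hyperbolic $\rho_1$-image, and that the lifts of a fixed representation form a torsor over ${\rm Hom}(\pi_1(S),\mathbb Z/2)$, which will let us match traces on such a generating set after a sign correction. For the first point, I would fix arbitrary lifts $\hat\rho_1,\hat\rho_2\colon\pi_1(S)\to{\rm SL}(2,\mathbb C)$ of $\rho_1,\rho_2$ at the outset: the obstruction to lifting a representation into ${\rm PSL}(2,\mathbb C)$ is the class $w_2\in H^2(\pi_1(S);\mathbb Z/2)$, which is locally constant on the representation variety, so its zero set is closed; since it vanishes on Fuchsian representations (whose Euler number $2-2g$ is even), it vanishes on the connected set $QF(S)$ containing them, hence on $\overline{QF(S)}=AH(S)$ by the density theorem. (Equivalently, the holonomy of an orientable hyperbolic $3$-manifold always lifts to ${\rm SL}(2,\mathbb C)$.)

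Next I would construct $\mathcal S$. Start with any standard set of generators having $\alpha_1=\alpha$, which exists since $\alpha$ is a non-separating simple closed curve, and make the remaining generators $\rho_1$-hyperbolic one at a time. By Sullivan's finiteness theorem \cite{sullivan-finite}, only finitely many simple closed curves $\gamma$ on $S$ have $\rho_1(\gamma)$ parabolic. Since $i(\alpha_1,\alpha_j)=i(\alpha_1,\beta_j)=0$ for $j\ge2$, the Dehn twist $T_{\alpha_1}$ fixes $\alpha_1$ and every $\alpha_j,\beta_j$ with $j\ge2$, so replacing $\beta_1$ by $T_{\alpha_1}^n(\beta_1)$ still gives a standard set of generators; among the pairwise distinct curves $T_{\alpha_1}^n(\beta_1)$, $n\in\mathbb Z$, some avoids the finite parabolic set. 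Iterating --- replace $\alpha_j$ by a twist $T_{\beta_j}^m(\alpha_j)$, then $\beta_j$ by $T_{\alpha_j}^k(\beta_j)$, for $j=2,\dots,g$, each twist supported near a curve disjoint from (hence fixing) all previously adjusted generators --- produces a standard set of generators $\mathcal S$ with $\alpha_1=\alpha$ and $\rho_1(\delta)$ hyperbolic for every $\delta\in\mathcal S$. Each $\delta\in\mathcal S$ is non-separating and equals $\alpha$, meets $\alpha$ exactly once, or is disjoint from $\alpha$, so $\mathcal S\subseteq W(\alpha)$.

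Finally I would correct signs. As $\mathcal S\subseteq W(\alpha)$ and $\rho_1,\rho_2$ have the same complex length spectrum on $W(\alpha)$, for each $\delta\in\mathcal S$ we have $\lambda^2_{\rho_1}(\delta)=\lambda^2_{\rho_2}(\delta)$, hence
$$\Tr^2(\rho_1(\delta))=\lambda^2_{\rho_1}(\delta)+2+\lambda^{-2}_{\rho_1}(\delta)=\Tr^2(\rho_2(\delta)),$$
and, $\rho_1(\delta)$ being hyperbolic, these are nonzero, so $\Tr(\hat\rho_1(\delta))=\epsilon(\delta)\,\Tr(\hat\rho_2(\delta))$ for a well-defined $\epsilon(\delta)\in\{\pm1\}$. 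The homology classes of $\alpha_1,\beta_1,\dots,\alpha_g,\beta_g$ form a basis of $H_1(S;\mathbb Z/2)$ --- the relation $\prod[\alpha_i,\beta_i]=1$ being a product of commutators, hence trivial mod $2$ --- so $\delta\mapsto\epsilon(\delta)$ extends to a homomorphism $\epsilon\colon\pi_1(S)\to\{\pm1\}$. Then $\tilde\rho_2:=\epsilon\cdot\hat\rho_2$ is again a lift of $\rho_2$, and with $\tilde\rho_1:=\hat\rho_1$ we get $\Tr(\tilde\rho_1(\delta))=\Tr(\tilde\rho_2(\delta))$ for all $\delta\in\mathcal S$; thus $\tilde\rho_1,\tilde\rho_2$ are trace normalized with respect to $\mathcal S$.

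The one step requiring care is the middle one: staying inside $W(\alpha)$ while avoiding the finitely many $\rho_1$-parabolic simple closed curves. The Dehn-twist bootstrap above, combined with Sullivan's finiteness theorem, handles it; the existence of ${\rm SL}(2,\mathbb C)$-lifts and the sign correction are then routine manipulations with the $\mathbb Z/2$-torsor of lifts.
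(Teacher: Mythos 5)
Your proposal is correct and follows essentially the same route as the paper: choose a standard generating set contained in $W(\alpha)$ with $\rho_1$-hyperbolic images using Sullivan's finiteness theorem (the paper's algebraic replacements $\alpha_j\mapsto\alpha_j\beta_j^n$, $\beta_j\mapsto\beta_j\alpha_j^n$ are your Dehn twists), lift to ${\rm SL}(2,\mathbb C)$, and flip signs on generators to match traces. The only differences are cosmetic: the paper cites Culler/Kra for the existence of lifts rather than your obstruction-theory-plus-density argument (which is correct but a heavier tool), and it verifies directly that the sign-adjusted assignment respects the relation $\prod[\alpha_j,\beta_j]=1$ instead of invoking the ${\rm Hom}(\pi_1(S),\mathbb Z/2)$-torsor of lifts, which amounts to the same computation.
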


\begin{proof}
Let $\alpha_1=\alpha$. Choose a simple non-separating curve $\beta$ which intersects $\alpha_1$ exactly
once so that $\rho_1(\beta_1)$ is hyperbolic. (We may do so, since, by a result of Sullivan \cite{sullivan-finite},
there are only finitely many simple curves $\gamma$ such that $\rho_1(\gamma)$ is parabolic 
and there are infinitely many possibilities for $\beta_1$.)
Extend $\{\alpha_1,\beta_1\}$ to a standard set of generators $\{\alpha_1,\beta_1,\ldots,\alpha_g,\beta_g\}$.
We may assume that $\rho_1(\alpha_j)$ is hyperbolic for all $j\ge 2$, by
replacing $\alpha_j$ by $\alpha_j\beta_j^n$ for some $n$ if necessary.
We may then assume that $\rho_1(\beta_j)$ is hyperbolic
for all $j\ge 2$ by replacing $\beta_j$ by $\beta_j\alpha_j^n$ for some $n$ if necessary. 
Notice that ${\mathcal S} \subseteq W(\alpha)$.

Since each $\rho_i$ is discrete and faithful, each $\rho_i$ lifts to a representation \hbox{$\rho_i':\pi_1(S)\to{\rm SL}(2,\mathbb C)$}
(see Culler \cite{culler-lift} or Kra \cite{kra-lift}).
Let
$$\tilde{\rho}_2(\delta) = \left\{\begin{array}{cc}
\rho_2'(\delta) &\qquad \mbox{if } \Tr(\rho'_1(\delta)) =  \Tr(\rho'_2(\delta))\\
-\rho_2'(\delta) &\qquad  \mbox{if }  \Tr(\rho'_1(\delta)) =  -\Tr(\rho'_2(\delta))\end{array}
\right. $$
for all $\delta\in\mathcal S$.
Notice that $\tilde\rho_i(\delta)$ is a lift of $\rho_i(\delta)$ for all $\delta\in\mathcal{S}$
and that $\tilde\rho_i( \prod_{j=1}^g[\alpha_j,\beta_j] )=I$, since $\rho_i'( \prod_{j=1}^g[\alpha_j,\beta_j])=I$. 
Therefore, $\tilde\rho_1$ and $\tilde\rho_2$ are lifts of $\rho_1$ and $\rho_2$ which are trace normalized with
respect to $\mathcal S$. 
\end{proof}

We next show that the trace normalized lifts are conjugate on the subgroups  \hbox{$G_j = <\alpha_j, \beta_j>$.}

\begin{lemma} 
Suppose that $\tilde\rho_1:\pi_1(S)\to {\rm SL}(2,\mathbb C)$ and $\tilde\rho_2:\pi_1(S)\to {\rm SL}(2,\mathbb C)$
are trace normalized lifts, with respect to a standard generating set $\mathcal S$, of Kleinian
surface groups with the same marked complex length spectrum on $W(\alpha_1)$.
If $j\in\{1,\ldots,g\}$, then there exists \hbox{$K_j\in {\rm SL}(2,\mathbb C)$} such that 
$\tilde\rho_2|_{G_j}=(K_j\tilde\rho_1K_j^{-1})|_{G_j}$.
In particular, if $\gamma\in G_j$, then $\Tr(\tilde\rho_1(\gamma)) = \Tr(\tilde\rho_2(\gamma))$.
\label{equal on Gi}
\end{lemma}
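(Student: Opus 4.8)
The plan is to work with a single handle $G_j=\langle\alpha_j,\beta_j\rangle$, normalize both lifts so that $\tilde\rho_1(\alpha_j)$ and $\tilde\rho_2(\alpha_j)$ are diagonal with attracting fixed point $\infty$, and then use the trace hypotheses together with the asymptotic expansion of Lemma \ref{basic comp} (applied to the curves $\alpha_j^n\beta_j\in W(\alpha_1)$) to pin down the matrix entries of $\tilde\rho_i(\beta_j)$ up to a common conjugacy. First I would observe that after conjugating $\tilde\rho_1$ and $\tilde\rho_2$ individually (by diagonal matrices, which commute with $\tilde\rho_i(\alpha_j)$), we may assume both representations are $(\alpha_j,\beta_j)$-normalized, so
$$\tilde\rho_i(\alpha_j)=\pm\begin{pmatrix}\lambda_i&0\\0&\lambda_i^{-1}\end{pmatrix},\qquad
\tilde\rho_i(\beta_j)=\pm\begin{pmatrix}a_i&b_i\\c_i&d_i\end{pmatrix},\qquad a_id_i-b_ic_i=1.$$
Since $\alpha_j=\alpha_1$ when $j=1$, and for $j\ge 2$ the curve $\alpha_j$ and all the curves $\alpha_j^n\beta_j$ lie in $W(\alpha_1)$ (they are simple, non-separating, and disjoint from $\alpha_1$ up to at most one intersection — here one must check this carefully from the definition of a standard generating set, which is where the geometry enters), the hypothesis that $\rho_1,\rho_2$ have the same marked complex length spectrum on $W(\alpha_1)$ gives $\lambda_1^2=\lambda_2^2$ (so $\lambda_1=\pm\lambda_2$) and, more importantly, $\lambda^2_{\tilde\rho_1}(\alpha_j^n\beta_j)=\lambda^2_{\tilde\rho_2}(\alpha_j^n\beta_j)$ for all $n$, hence ${\rm Tr}^2(\tilde\rho_1(\alpha_j^n\beta_j))={\rm Tr}^2(\tilde\rho_2(\alpha_j^n\beta_j))$ for all $n$.

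Next I would extract the entrywise equalities. Using ${\rm Tr}^2(\tilde\rho_i(\alpha_j^n\beta_j))=\lambda_i^{2n}a_i^2+2a_id_i+\lambda_i^{-2n}d_i^2$ and the fact that the trace itself (not just its square) is prescribed on $\mathcal S$ — this is the role of trace normalization, and I would exploit ${\rm Tr}(\tilde\rho_1(\beta_j))={\rm Tr}(\tilde\rho_2(\beta_j))$, i.e. $a_1+d_1=a_2+d_2$, as well as ${\rm Tr}(\tilde\rho_1(\alpha_j))={\rm Tr}(\tilde\rho_2(\alpha_j))$, i.e. $\lambda_1=\lambda_2$ (choosing the sign in the matrix representatives accordingly) — I would multiply the trace-squared identity by $\lambda_1^{-2n}$ and let $n\to\infty$ to get $a_1^2=a_2^2$, then read off the next term to get $a_1d_1=a_2d_2$, then the last term to get $d_1^2=d_2^2$. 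Combined with $a_i,d_i\ne 0$ (Lemma \ref{basic comp}) and $a_1+d_1=a_2+d_2$, these force $a_1=a_2$, $d_1=d_2$ (the alternative $a_1=-a_2,d_1=-d_2$ is killed by the sum condition unless both sums vanish, which is handled by the normalization of $\alpha_j$; I would also use the analogous computation with $\alpha_j^n\beta_j^{-1}$ or $\alpha_j^{-n}\beta_j$ if a genuinely separate argument is needed to remove residual sign ambiguity). It then remains to match $b_i$ and $c_i$: from $a_1=a_2$, $d_1=d_2$ and $a_id_i-b_ic_i=1$ we get $b_1c_1=b_2c_2$, and one further conjugation by a diagonal matrix that centralizes the already-matched $\tilde\rho_i(\alpha_j)$ (scaling $b\mapsto tb$, $c\mapsto t^{-1}c$) can be used to arrange $b_1=b_2$, hence $c_1=c_2$; since $\{\alpha_j,\beta_j\}$ generates $G_j$, this yields $\tilde\rho_1|_{G_j}=\tilde\rho_2|_{G_j}$ after conjugating by the composite $K_j\in{\rm SL}(2,\mathbb C)$, and the trace equality $\Tr(\tilde\rho_1(\gamma))=\Tr(\tilde\rho_2(\gamma))$ for $\gamma\in G_j$ is immediate.

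The main obstacle I anticipate is not the asymptotic algebra — that is a routine repetition of Lemma \ref{basic comp} and the arguments already carried out in Lemmas \ref{tri} and \ref{if not real} — but rather the bookkeeping of the $\pm$ sign ambiguities in ${\rm PSL}(2,\mathbb C)$ versus ${\rm SL}(2,\mathbb C)$, and making sure that all the curves $\alpha_j,\beta_j,\alpha_j^n\beta_j$ used actually lie in $W(\alpha_1)$ so that the complex-length hypothesis applies to them. The trace normalization hypothesis is precisely what lets one upgrade equalities of $\lambda^2$ (equivalently of ${\rm Tr}^2$) to equalities of the actual matrix entries, so the delicate point is to use it consistently; I would be careful to verify that the chosen matrix representatives of $\tilde\rho_1(\alpha_j)$ and $\tilde\rho_2(\alpha_j)$ can simultaneously be taken with the \emph{same} sign (using $\Tr(\tilde\rho_1(\alpha_j))=\Tr(\tilde\rho_2(\alpha_j))$), and similarly for $\beta_j$, before comparing entries.
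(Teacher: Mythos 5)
Your proposal is correct and follows essentially the same route as the paper: normalize both lifts so that $\tilde\rho_i(\alpha_j)$ is the same diagonal matrix, expand $\Tr(\tilde\rho_i(\alpha_j^n\beta_j))$ for the curves $\alpha_j^n\beta_j\in W(\alpha_1)$, take limits as $n\to\infty$ to force $a_1=a_2$ and $d_1=d_2$, and then conjugate by a diagonal matrix commuting with $\tilde\rho_i(\alpha_j)$ to match the off-diagonal entries. The only cosmetic difference is that you compare $\Tr^2$ and resolve signs at the end where the paper compares $\Tr$ up to sign and splits into cases; the step you attribute somewhat vaguely to ``the normalization of $\alpha_j$'' is, in the paper, exactly the observation that $\Tr(\tilde\rho_1(\beta_j))=\Tr(\tilde\rho_2(\beta_j))\neq 0$, which holds because trace normalization requires $\rho_1(\beta_j)$ to be hyperbolic.
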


\begin{proof}
Fix $j$ for the remainder of the proof of the lemma and assume that $\tilde\rho_1$ and $\tilde\rho_2$
are $(\alpha_j,\beta_j)$-normalized, so
$$\tilde\rho_1(\alpha_j)  = \tilde\rho_2(\alpha_j) = \left( \begin{array}{cc}
\lambda & 0\\
0 &   \lambda^{-1}\end{array}\right) 
$$
where $|\lambda|>1$, and
$$\tilde\rho_i(\beta_j) = \left( \begin{array}{cc}
a_i & b_i\\
c_i &  d_i \end{array}\right).$$
where $a_id_i-b_ic_i = 1$ and all the co-efficients are non-zero, by Lemma \ref{basic comp}.
Since $\Tr(\tilde\rho_1(\beta_j)) = \Tr(\tilde\rho_2(\beta_j))$, 
$$a_1 + d_1 = a_2+ d_2.$$

The curve $\alpha_j^n\beta_j \in W(\alpha_1)$, for all $n$, since
it is non-separating, simple and disjoint from $\alpha$ if $j >1$ and intersects $\alpha$
exactly once if $j=1$.
Therefore, $\Tr(\tilde\rho_1(\alpha_j^n\beta_j)) = \pm \Tr(\tilde\rho_2(\alpha_j^n\beta_j))$,
which gives the equation
$$\lambda^n a_1 + \lambda^{-n} d_1 = \pm(\lambda^n a_2 + \lambda^{-n} d_2).$$

If $\lambda^n a_1 + \lambda^{-n} d_1 = -(\lambda^n a_2 + \lambda^{-n} d_2)$ for infinitely many values of $n$,
then we see, by dividing by $\lambda^{n}$ and passing to a limit, that
$a_1 = -a_2$. It then follows that $d_1 = -d_2$. Thus, $a_1 + d_1 = -(a_2+ d_2)$ 
which contradicts the fact that the traces of $\tilde\rho_i$ agree (and are non-zero) on elements of $\mathcal S$.

Therefore, there are an infinitely many values  of $n$ where 
 $\lambda^n a_1 + \lambda^{-n} d_1 = \lambda^n a_2 + \lambda^{-n} d_2$
Taking limits again, we see that $a_1 = a_2$ and $d_1 = d_2$.
Therefore,
$$b_1c_1 = 1-a_1d_1 = 1-a_2d_2 = b_2c_2.$$
Choose $u \in \CC$ such that $u^2 = b_2/b_1 = c_1/c_2$. Then $u^2b_1 = b_2$ and $u^{-2}c_1 = c_2$. Let
$$K_j = \left( \begin{array}{cc}
u & 0\\
0 &  u^{-1} \end{array}\right).$$
Notice that $K_j\tilde\rho_1(\alpha_j)K_j^{-1}=\tilde\rho_1(\alpha_j)$
and that
$$K_j\tilde\rho_1(\beta_j)K_j^{-1} = \left( \begin{array}{cc}
u & 0\\
0 &  u^{-1} \end{array}\right) \left( \begin{array}{cc}
a_1 & b_1\\
c_1 &   d_1\end{array}\right) \left( \begin{array}{cc}
u ^{-1}& 0\\
0 &  u \end{array}\right) =  \left( \begin{array}{cc}
a_1 & u^2b_1\\
u^{-2}c_1 &   d_1\end{array}\right) = \tilde\rho_2(\beta_j)
.$$
Therefore, $K_j$ conjugates the restriction of $\tilde\rho_1$ to $G_j$ to the restriction of
$\tilde\rho_2$ to $G_j$.
\end{proof}

We begin our analysis of trace normalized lifts which agree on $G_j$ by examining the relationship
between the images of elements of $\mathcal S$. Lemma \ref{generators} is the crucial step
in the proof of our main result. The additional information concerning matrix co-efficients in case (2) will
only be used when $S$ has genus two.

\begin{lemma}
\label{generators}
Suppose that $\rho_1^j:\pi_1(S)\to {\rm SL}(2,\mathbb C)$ and $\rho_2^j:\pi_1(S)\to {\rm SL}(2,\mathbb C)$
are trace normalized lifts, with respect to a standard generating set $\mathcal S$, of Kleinian
surface groups with the same complex marked length spectrum on $W(\alpha_1)$.
If $\rho^j_1$ and $\rho^j_2$ agree
on $G_j = <\alpha_j,\beta_j>$ and $\delta \in {\mathcal S}$, then  either
\begin{enumerate}
\item
$\rho^j_1(\delta)= \rho^j_2(\delta)$, or
\item
$\rho^j_1([\alpha_j,\beta_j]) $ is hyperbolic and
$R_j\rho^j_1(\delta)R_j^{-1}= \rho^j_2(\delta)$,
where $R_j$ is a lift of the rotation of order two about the axis of  $\rho^j_1([\alpha_j,\beta_j]) $.\newline
Moreover, if $\gamma \in G_j$ such that $\gamma, \gamma\delta\in W(\alpha_1)$, and
$$\rho^j_1([\alpha_j,\beta_j]) =  
 \left( \begin{array}{cc}
\lambda & 0\\
0& \lambda^{-1} \end{array}\right),\ \ 
\rho^j_1(\gamma) =  
 \left( \begin{array}{cc}
a & b\\
c & d  \end{array}\right), \ \ {\rm and}\ \ 
\rho_1^j(\delta) = 
 \left( \begin{array}{cc}
e & f\\
g & h \end{array}\right)
$$
where $|\lambda|>1$, then
$$\frac{e}{h} = -\frac{d}{a}$$
\end{enumerate}
\end{lemma}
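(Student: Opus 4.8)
The plan is to run the asymptotic trace estimate of Lemma~\ref{basic comp} along several one-parameter families of simple curves in $W(\alpha_1)$, using that $\rho^j_1$ and $\rho^j_2$ already agree on $G_j$, so that the only unknown is the single matrix $\rho^j_2(\delta)$.

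First I would dispose of the case $\delta\in G_j$, where $\rho^j_1(\delta)=\rho^j_2(\delta)$ and conclusion (1) holds. So assume $\delta\in\{\alpha_k,\beta_k\}$ with $k\ne j$, set $A=\rho^j_1(\alpha_j)=\rho^j_2(\alpha_j)$, $B=\rho^j_1(\beta_j)=\rho^j_2(\beta_j)$, $D_i=\rho^j_i(\delta)$, and $(\alpha_j,\beta_j)$-normalize so that $A=\mathrm{diag}(\mu,\mu^{-1})$ with $|\mu|>1$; then all coefficients of $B$ are nonzero by Lemma~\ref{basic comp}, and since $\delta\in\mathcal S$ trace normalization gives $\Tr D_1=\Tr D_2$, which is nonzero because $\rho_1(\delta)$ is hyperbolic. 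The topological input is that for every $n\in\Z$ and $\varepsilon\in\{1,-1\}$ the elements $\alpha_j^n\beta_j^{\varepsilon}\delta$ and $\delta\beta_j^{\varepsilon}\alpha_j^n$ are represented by simple non-separating curves meeting $\alpha_1$ at most once (each is a band sum of the slope-$(n,\varepsilon)$ curve in the handle $G_j$ with the curve $\delta$), hence lie in $W(\alpha_1)$. Since $\rho^j_1$ and $\rho^j_2$ agree on $G_j$, we have $\rho^j_i(\alpha_j^n\beta_j^{\varepsilon}\delta)=A^n(B^{\varepsilon}D_i)$, while $\rho^j_i(\delta\beta_j^{\varepsilon}\alpha_j^n)$ has the same trace as $A^n(D_iB^{\varepsilon})$. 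Writing $\Tr(A^nM)=\mu^n M_{11}+\mu^{-n}M_{22}$ and using $|\mu|>1$ as in the earlier lemmas (a single sign must serve for all large $n$), the hypothesis forces, for each of the four matrices $M=B^{\pm1}D_i$ and $D_iB^{\pm1}$, relations $(M^{(1)})_{11}=\eta_M(M^{(2)})_{11}$ and $(M^{(1)})_{22}=\eta_M(M^{(2)})_{22}$ with a single sign $\eta_M\in\{1,-1\}$, where $M^{(i)}$ is the matrix built from $D_i$.

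Next I would solve this linear system in the entries $(e_i,f_i,g_i,h_i)$ of $D_i$, using in addition $\det D_i=1$, $\Tr D_1=\Tr D_2\ne 0$, and $\Tr B=\Tr\rho_1(\beta_j)\ne 0$ (again because $\rho_1(\beta_j)$ is hyperbolic). The outcome should be that the system forces either $D_1=D_2$, which is conclusion (1), or else $D_2=R_jD_1R_j^{-1}$, where $R_j$ is the $\pi$-rotation about the axis of $\rho^j_1([\alpha_j,\beta_j])$; in the frame in which that commutator is $\mathrm{diag}(\lambda,\lambda^{-1})$ one has $R_j=\mathrm{diag}(i,-i)$, so conjugation by $R_j$ merely negates the off-diagonal entries. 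This rotation is defined precisely when $\rho^j_1([\alpha_j,\beta_j])$ is loxodromic; when it is parabolic there is no nontrivial involution commuting with it and the same estimates collapse the system to $D_1=D_2$. This yields conclusion (2). I expect this step --- tracking the signs $\eta_M$ through the four families and showing that the only alternative to $D_1=D_2$ is exactly conjugation by $R_j$, together with ruling out the degenerate coefficient configurations --- to be the main obstacle.

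Finally, the coefficient identity in (2): normalize so that $\rho^j_1([\alpha_j,\beta_j])=\mathrm{diag}(\lambda,\lambda^{-1})$ with $|\lambda|>1$, whence $D_2=R_jD_1R_j^{-1}$ has entries $(e,-f,-g,h)$. For $\gamma\in G_j$ with $\gamma,\gamma\delta\in W(\alpha_1)$ and $\rho^j_1(\gamma)$ equal to the matrix with rows $(a,b)$ and $(c,d)$, a direct computation gives $\Tr\rho^j_1(\gamma\delta)-\Tr\rho^j_2(\gamma\delta)=2(bg+cf)$ and $\Tr\rho^j_1(\gamma\delta)+\Tr\rho^j_2(\gamma\delta)=2(ae+dh)$; since $\gamma\delta\in W(\alpha_1)$ the two traces agree up to sign, so $bg+cf=0$ or $ae+dh=0$. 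The first possibility must be excluded by one further trace comparison --- the natural attempt is the identity $\Tr(XY)+\Tr(XY^{-1})=\Tr X\,\Tr Y$ with $X=\rho^j_1(\gamma)$, $Y=D_i$, combined with $\Tr D_1=\Tr D_2$ and with $\gamma\delta^{-1}\in W(\alpha_1)$, or else revisiting the asymptotic estimates of the previous step --- leaving $ae+dh=0$, i.e.\ $e/h=-d/a$. Pinning down this last exclusion cleanly is the delicate part of the argument and I would expect to defer it to the same bookkeeping used in the previous step.
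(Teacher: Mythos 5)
Your proposal leaves the two hardest points of the lemma undone, and the route you choose makes them harder than they need to be. The paper's proof does not twist about $\alpha_j$: it runs the trace comparison along the family $\gamma_n=[\alpha_j,\beta_j]^n\gamma[\alpha_j,\beta_j]^{-n}\delta$, i.e.\ the images of $\gamma\delta$ under Dehn twists about the \emph{commutator}, in the frame where $\rho_1^j([\alpha_j,\beta_j])$ is diagonal. In that frame $\Tr\rho_i^j(\gamma_n)=a e_i+d h_i+b g_i\lambda^{2n}+c f_i\lambda^{-2n}$ has a constant-in-$n$ term as well as the two geometric terms, and comparing coefficients gives either $f_1=f_2,\ g_1=g_2,\ ae_1+dh_1=ae_2+dh_2$ or $f_1=-f_2,\ g_1=-g_2,\ ae_1+dh_1=-(ae_2+dh_2)$; combined with $\det=1$ and the trace normalization this yields exactly the dichotomy $\rho_2^j(\delta)=\rho_1^j(\delta)$ or $\rho_2^j(\delta)=R_j\rho_1^j(\delta)R_j^{-1}$, and in the second case the constant term immediately gives $ae_1+dh_1=-(ae_1+dh_1)=0$, i.e.\ $e/h=-d/a$. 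Your family $\alpha_j^n\beta_j^{\varepsilon}\delta$ only probes the $11$ and $22$ entries of $B^{\pm1}D_i$ and $D_iB^{\pm1}$ in the $\alpha_j$-frame; the commutator axis, which governs the answer (and whose hyperbolicity versus parabolicity decides whether alternative (2) can occur at all), never enters your equations. You acknowledge that showing your linear system has solution set exactly $\{D_1,\ R_jD_1R_j^{-1}\}$ is ``the main obstacle'' and you do not carry it out; with mixed signs $\eta_M$ the system does not visibly collapse, and nothing in your setup distinguishes the parabolic-commutator case, which the paper handles by a separate computation using the polynomial growth of $\Tr\rho_i^j(\gamma_n)$ in $n$.

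There are two further concrete problems. First, your topological input is overstated: it is not true in general that all four products $\alpha_j^n\beta_j^{\pm1}\delta$ and $\delta\beta_j^{\pm1}\alpha_j^n$ are simple; for disjoint based loops meeting only at the basepoint, typically only one of $\beta_j\delta$, $\beta_j^{-1}\delta$ smooths to an embedded curve, which is exactly why the paper allows $\gamma=\beta_j$ or $\beta_j^{-1}$ ``depending on the orientations'' and then takes Dehn-twist images (automatically simple, with intersection number with $\alpha_1$ preserved since the twisting curve is disjoint from $\alpha_1$). So you likely have only half the constraints you claim. Second, your derivation of the ``moreover'' identity only produces the alternative $bg+cf=0$ or $ae+dh=0$, and the exclusion of the first is again deferred; in the paper this identity is not a separate step but falls out of the same coefficient comparison (the constant term) that produces case (2). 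As it stands the proposal is a plausible plan with the decisive algebraic and topological steps missing, not a proof.
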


\begin{proof}
The proof breaks up into two cases, depending on whether $\rho^j_1([\alpha_j,\beta_j])$ is hyperbolic or parabolic.

\medskip\noindent
{\bf Case I: $\rho^j_1([\alpha_j,\beta_j])$ is hyperbolic:} 
We may assume, by conjugating, that
$$\rho^j_1([\alpha_j,\beta_j]) = \rho^j_2([\alpha_j,\beta_j]) = \left( \begin{array}{cc}
\lambda & 0\\
0 &   \lambda^{-1}\end{array}\right) 
$$ 
where $|\lambda| > 1.$
As $\rho^j_1$ and $\rho^j_2$ agree on $G_j$, we can assume that $\delta$ is not either  $\alpha_j$ or $\beta_j$. Let 
$$\rho^j_i(\delta) = \left( \begin{array}{cc}
e_i & f_i\\
g_i &  h_i \end{array}\right).$$
where $e_ih_i-f_ig_i = 1.$
Moreover, since the traces  agree on generators, 
$$e_1 + h_1 = e_2+ h_2.$$

Suppose that  $\gamma \in G_j$ and $\gamma, \gamma\delta \in W(\alpha_1)$. 
(We can choose either $\gamma=\beta_j$ or $\gamma=\beta_j^{-1}$ depending on the  
orientations on the curves, see Figure \ref{dehntwist}). Let
$$\rho^j_1(\gamma) = \rho^j_2(\gamma) =  \left( \begin{array}{cc}
a & b\\
c &  d \end{array}\right).$$
where $ad-bc = 1$.

\begin{figure}[htbp] 
   \centering
   \includegraphics[width=5in]{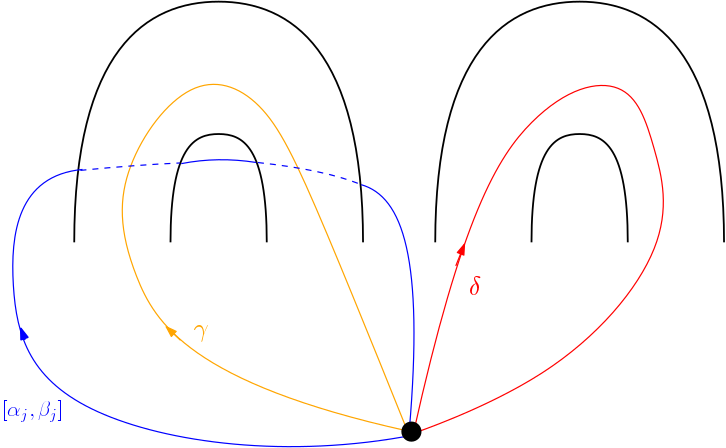} 
   \caption{Curves $\gamma, \delta$ and $[\alpha_j,\beta_j]$}
   \label{dehntwist}
\end{figure}

Let  $\gamma_n = [\alpha_j,\beta_j]^n \gamma [\alpha_j,\beta_j]^{-n} \delta,$
which also lies in $W(\alpha_1)$, 
since it is  the image of $\gamma\delta$ under the  $n$-fold Dehn twist
about $[\alpha_j,\beta_j]$.  
So, by assumption, 
$$\Tr(\rho^j_1(\gamma_n)) = \pm \Tr(\rho^j_2(\gamma_n)).$$

By expanding, we see that, for all $n$,
$$ae_1 + dh_1 + bg_1\lambda^{2n} + cf_1\lambda^{-2n} = \pm(ae_2 + dh_2 + bg_2\lambda^{2n} + cf_2\lambda^{-2n} ).$$
Thus there exists an increasing  subsequence where the traces either all agree or all differ up to sign.

If there  exists an increasing sequence $\{n_j\}$ such that the traces all agree, then
$$ae_1 + dh_1 + bg_1\lambda^{2n_j} + cf_1\lambda^{-2n_j} = ae_2 + dh_2 + bg_2\lambda^{2n_j} + cf_2\lambda^{-2n_j} $$
for all $n_j$.  Dividing the above equation by $\lambda^{2n_j}$ and taking a limit as $j\to \infty$, we
see that  $bg_1=bg_2$. Since $b\ne 0$, by Lemma \ref{basic comp}, $g_1=g_2$.
Thus,
$$ae_1 + dh_1 + cf_1\lambda^{-2n_j} = ae_2 + dh_2 + cf_2\lambda^{-2n_j} $$
and we see, by taking the limit as $n_j\to\infty$, that $ae_1+dh_1=ae_2+dh_2$.
Finally, since  $cf_1\lambda^{-2nj}=cf_2\lambda^{-2nj}$ and $\lambda$ and $c$ are non-zero, $f_1=f_2$.
Summarizing, we have 
\begin{equation}
ae_1 + dh_1 = ae_2+dh_2,\qquad f_1 = f_2,\qquad {\rm and}\quad g_1 = g_2.
\label{equaltraces}
\end{equation}

Similarly, if there  exists an increasing sequence $\{n_j\}$ such that the traces all disagree then,
$$
ae_1 + dh_1 + bg_1\lambda^{2n_j} + cf_1\lambda^{-2n_j} = -(ae_2 + dh_2 + bg_2\lambda^{2n_j} + cf_2\lambda^{-2n_j} )$$
for all $n_j$.  Taking limits as above, we conclude that
\begin{equation} 
ae_1+dh_1 = -(ae_2 + dh_2), \qquad f_1 = -f_2 , \qquad {\rm and}\quad  g_1 = -g_2.
\label{differtraces}
\end{equation}

Thus given any $\gamma \in G_j$ such that $\gamma, \gamma\delta \in W(\alpha_1)$, then $\gamma$ either satisfies equation (\ref{equaltraces}) or (\ref{differtraces}).
Since  $f_i$ and $ g_i$ are non-zero , we conclude that, with the above normalization, either
\begin{itemize}
\item[(a)] equation (\ref{equaltraces}) holds for all $\gamma  \in G_j$ such that $\gamma, \gamma\delta \in W(\alpha_1)$,
or
\item[(b)] equation (\ref{differtraces}) holds for all $\gamma  \in G_j$ such that $\gamma, \gamma\delta \in W(\alpha_1)$.
 \end{itemize}
 
 \medskip\noindent 
{\bf Case Ia): Equation (\ref{equaltraces}) holds for all $\gamma  \in G_j$ such that $\gamma, \gamma\delta \in W(\alpha_1)$}.
Choose $\gamma \in G_j$ such that $\gamma, \gamma\delta \in W(\alpha_1)$ and let 
$$\rho^j_1(\gamma) = \rho^j_2(\gamma) =  \left( \begin{array}{cc}
a & b\\
c &  d \end{array}\right).$$
where $ad-bc = 1$. Then, by equation (\ref{equaltraces}),
$$ae_1 + dh_1 = ae_2+dh_2,\qquad f_1 = f_2, \qquad  {\rm and}\quad g_1 = g_2.$$
Since  $e_ih_i-f_ig_i = 1$, we conclude that $e_1h_1 = e_2h_2$, so, since  we also have $e_1+h_1=e_2+h_2$,  
$$(x-e_1)(x-h_1)  = x^2 -2(e_1+h_1)x+e_1h_1= (x-e_2)(x-h_2),$$
which implies that either
(i) $e_1 = e_2$ and $h_1= h_2$ or (ii) $e_1 = h_2$ and $h_1 = e_2$.

If $e_1 = e_2$ and $h_1 = h_2$, then, since  we already know that $f_1=f_2$ and $g_1=g_2$, 
we may conclude that $\rho^j_1(\delta) = \rho^j_2(\delta)$ and we are in case (1).

If $e_1 = h_2$ and $h_1 = e_2$, then, since $ae_1+dh_1 = ae_2 + dh_2$, we conclude that
$$(a-d)(e_1-h_1) = 0.$$ 
If $a\ne d$,  then this implies that $e_1=h_1$, so $e_1=e_2$.
Since we already know that $f_1=f_2$ and $g_1=g_2$,  and all the matrix co-efficients are non-zero, we may
conclude that $\rho^j_1(\delta) = \rho^j_2(\delta)$, so we are again in case (1).

In order to conclude that we are in case (1), it only remains to check that we can  choose 
$\gamma_0\in G_j$, so that $\gamma_0,\gamma_0\delta\in W(\alpha_1)$ and
$$\rho^j_1(\gamma_0) = \rho^j_2(\gamma_0) =  \left( \begin{array}{cc}
a_0 & b_0\\
c_0 &  d_0 \end{array}\right).$$
where $a_0\ne d_0$.

We may assume that we have initially chosen $\gamma$ to be either $\beta_j^{-1}$ or $\beta_j$,
and that, with the above notation, $a=d$.  We may then choose $\gamma_0$ to be
the element in  $\{[\alpha_j,\beta_j]\gamma,[\alpha_j,\beta_j]^{-1}\gamma\}$ which is simple and has the property that 
$\gamma_0,\gamma_0\delta\in W(\alpha_1)$ (see Figure \ref{switch}). 
Observe that either $a_0=\lambda a$ and $d_0=\lambda^{-1}d$
or $a_0=\lambda^{-1}a$ and $d_0=\lambda d$, so $a_0\neq d_0$. This choice of $\gamma_0$ allows us to complete
the proof.
\begin{figure}[htbp] 
   \centering
   \includegraphics[width=5in]{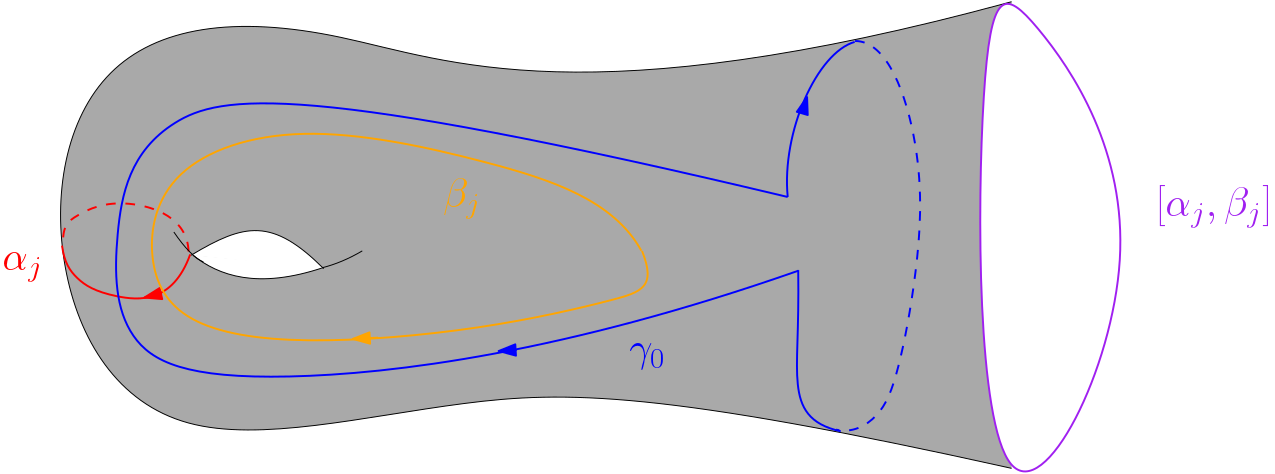} 
   \caption{The curve $\gamma_0$}
   \label{switch}
\end{figure}

\medskip\noindent
{\bf Case Ib): Equation (\ref{differtraces}) holds for all $\gamma  \in G_j$ such that $\gamma, \gamma\delta \in W(\alpha_1)$}. 
Choose $\gamma \in G_j$ so that $\gamma, \gamma\delta \in W(\alpha_1)$ and let 
$$\rho^j_1(\gamma) = \rho^j_2(\gamma) =  \left( \begin{array}{cc}
a & b\\
c &  d \end{array}\right).$$
where $ad-bc = 1$. Then, by equation (\ref{differtraces}).
$$ae_1 + dh_1 = -(ae_2+dh_2),\qquad f_1 = -f_2,\qquad  {\rm and}\quad g_1 = -g_2.$$
As  $e_ih_i-f_ig_i = 1$,  we conclude that $e_1h_1 = e_2h_2$.
Since 
$e_1h_1 = e_2h_2$ and $e_1 + h_1 = e_2 + h_2$, we may conclude, just as in Case Ia, that
either (i) $e_1 = h_2$ and $h_1 = e_2$ or  (ii) $e_1 = e_2,$ and $h_1= h_2$.

If $e_1 = h_2$ and $h_1 = e_2$, then, since $ae_1+dh_1 = -(ae_2 + dh_2)$, we see that
$$\Tr(\rho^j_1(\gamma))\Tr(\rho^j_1(\delta)) =  (a+d)(e_1+h_1)=ae_1+dh_1+ae_2+dh_2=0$$
which is impossible since both traces are non-zero. Therefore, $e_1 = e_2$ and $h_1 = h_2$, so.
since $f_1=-f_2$ and $g_1=-g_2$, 
$$ \rho^j_2(\delta) = \left( \begin{array}{cc}
e_1 & -f_1\\
-g_1 & h_1  \end{array}\right)  = 
\left( \begin{array}{cc}
i & 0\\
0 & -i  \end{array}\right) 
 \left( \begin{array}{cc}
e_1 & f_1\\
g_1 & h_1  \end{array}\right)
\left( \begin{array}{cc}
-i & 0\\
0 & i  \end{array}\right) = R_j\rho^j_1(\delta) R_j^{-1}
$$
so
$ \rho^j_1(\delta)=R_j\rho^j_2(\delta)R_j^{-1}$ and 
$R_j$ is a lift of the rotation of order two about the axis of 
$\rho^j_1([\alpha_j,\beta_j])$. 

Moreover, since $ae_1+dh_1 = -(ae_2 + dh_2)=-(ae_1+dh_1)$, we see that
\hbox{$ae_1+dh_1=0$}, so
$$\frac{e_1}{h_1} = -\frac{d}{a},$$
and we are in case (2).

\medskip\noindent
{\bf Case II: $\rho^j_1([\alpha_j,\beta_j])$ is  parabolic:}  
Choose $\gamma \in G_j$ so that $\gamma, \gamma\delta \in W(\alpha_1)$. 
We may conjugate so that
$$\rho^j_1([\alpha_j,\beta_j]) = \rho^j_2([\alpha_j,\beta_j]) = \left( \begin{array}{cc}
1 &1\\
0 &   1\end{array}\right) 
$$ 
and $0$ is a  fixed points of $\rho^j_1(\gamma) = \rho^j_2(\gamma)$, so
$$\rho^j_1(\gamma) = \rho^j_2(\gamma) =  \left( \begin{array}{cc}
a & 0\\
c &  d \end{array}\right)$$
where $ad = 1$.
So, $a$ and $d$ are non-zero and $c$ is non-zero,
since otherwise  $\rho^i_1([\alpha_i,\beta_i])$ and $\rho^i_1(\gamma)$ would have a common fixed point.

Let 
$$\rho^j_i(\delta) = \left( \begin{array}{cc}
e_i & f_i\\
g_i &  h_i \end{array}\right).$$
where $e_ih_i-f_ig_i = 1$ and all the matrix coefficients are non-zero.
Moreover, since the lifts are trace normalized with respect to $\mathcal S$,
$$e_1 + h_1 = e_2+ h_2.$$

Again, let  $\gamma_n = [\alpha_i,\beta_i]^n \gamma [\alpha_i,\beta_i]^{-n} \delta,$
which  lies in $W(\alpha_1)$, so
$$\Tr(\rho^i_1(\gamma_n)) = \pm {\rm Tr}(\rho^i_2(\gamma_n)).$$
Expanding, we obtain
$$(a+nc)e_1 + (n(d-a)-n^2c)g_1 + cf_i +(d-nc)h_1 = \pm((a+nc)e_2 + (n(d-a)-n^2c)g_2 + cf_2 +(d-nc)h_2).$$

If there are infinitely many  $n$ where the traces differ, then, by dividing by $n^2$ and taking limits, 
we conclude that $cg_1 = -cg_2$, so, since $c\ne 0$, $g_1 = - g_2$.  By, successively taking limits, we further
conclude that
$$c(e_1-h_1) = -c(e_2-h_2)\qquad {\rm and}\qquad ae_1+cf_1+dh_1 = -ae_2-cf_2-dh_2$$
Since $c\ne 0$, we see that $e_1 - h_1 = -e_2+h_2$. As $e_1+ h_1 = e_2 + h_2$, this implies that
$e_1 = h_2$ and $h_1 = e_2$. Therefore, 
$$f_1g_1 = e_1h_1-1 = e_2h_2-1 = f_2g_2 = -f_2g_1. $$
As $\rho^j_i(\delta)$ does not have a fixed point at $\infty$, $g_1$ must be non-zero, so $f_1 = -f_2$.
Since $ae_1+cf_1+dh_1 = -ae_2-cf_2-dh_2$, this implies that
$${\rm Tr}(\gamma){\rm Tr}(\delta) = (a+d)(e_1+h_1) = 0$$
which is a contradiction, since both traces are non-zero. 

Therefore, there are infinitely many  $n$ where the traces agree,  so, taking limits  as above,
we see that 
$$g_1 = g_2,\qquad  c(e_1-h_1) = c(e_2-h_2),\qquad {\rm and}\qquad ae_1+cf_1+dh_1 = ae_2+cf_2+dh_2.$$
Since $c\ne 0$, $e_1 - h_1 = e_2-h_2$, which we may combine with the fact that  $e_1+ h_1 = e_2 + h_2$,
to conclude that $e_1 = e_2$ and $ h_1 = h_2$. Therefore,  
$$f_1g_1 = e_1h_1-1 = e_2h_2-1 = f_2g_2 = f_2g_1$$
which implies that  $f_1 = f_2$, so $\rho^j_1(\delta) =  \rho^j_2(\delta)$, which implies that we are in case (1).
\end{proof}

We now refine our analysis of trace normalized lifts which agree on $G_j$ to show that, for all $k$,
they either agree on $G_k$ or differ by a rotation in the axis of  the image of the commutator in $G_j$.

\begin{lemma}
\label{step one}
Suppose that $\rho_1:\pi_1(S)\to {\rm SL}(2,\mathbb C)$ and $\rho_2:\pi_1(S)\to {\rm SL}(2,\mathbb C)$
are trace normalized lifts, with respect to a standard generating set $\mathcal S$, of Kleinian
surface groups with the same marked complex length spectrum on $W(\alpha_1)$.
If $j,k\in\{1,\ldots, g\}$ and $\rho^j_1$ and $\rho^j_2$ agree
on $G_j = <\alpha_j,\beta_j>$, then either
\begin{enumerate}
\item
$\rho_1^j$ and $\rho_2^j$ agree on $G_k$, or
\item
$\rho_1^j([\alpha_j,\beta_j])$ is hyperbolic and 
$\rho_1^j|_{G_k}=(R_j\rho_2^j R_j^{-1})|_{G_k}$ where $R_j$ is a lift of the rotation of order two about the axis
of $\rho_1^j([\alpha_j,\beta_j])$.
\end{enumerate}
\end{lemma}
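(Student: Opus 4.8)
The plan is to apply Lemma \ref{generators} twice, once for each generator $\alpha_k$ and once for $\beta_k$, and then to check that the two resulting possibilities for the conjugating element are compatible. Since $\rho_1^j$ and $\rho_2^j$ agree on $G_j$, Lemma \ref{generators} (applied with $\delta=\alpha_k$ and again with $\delta=\beta_k$) tells us that for each of these two generators $\delta$, either $\rho_1^j(\delta)=\rho_2^j(\delta)$, or $\rho_1^j([\alpha_j,\beta_j])$ is hyperbolic and $R_j\rho_1^j(\delta)R_j^{-1}=\rho_2^j(\delta)$, where $R_j$ is a fixed lift of the order-two rotation about the axis of $\rho_1^j([\alpha_j,\beta_j])$. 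In the case $j=k$ there is nothing to prove since $\rho_1^j$ and $\rho_2^j$ already agree on $G_j=G_k$, so we may assume $j\neq k$; then $\alpha_k,\beta_k$ are both in $\mathcal S\subseteq W(\alpha_1)$, so the hypotheses of Lemma \ref{generators} apply to both.

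First I would treat the easy case: if $\rho_1^j([\alpha_j,\beta_j])$ is parabolic, then case (2) of Lemma \ref{generators} cannot occur, so $\rho_1^j(\alpha_k)=\rho_2^j(\alpha_k)$ and $\rho_1^j(\beta_k)=\rho_2^j(\beta_k)$, hence $\rho_1^j$ and $\rho_2^j$ agree on $G_k$ and we are in conclusion (1). So assume $\rho_1^j([\alpha_j,\beta_j])$ is hyperbolic, and normalize so that $\rho_1^j([\alpha_j,\beta_j])=\rho_2^j([\alpha_j,\beta_j])=\mathrm{diag}(\lambda,\lambda^{-1})$ with $|\lambda|>1$, so that $R_j$ may be taken to be (a lift of) $\mathrm{diag}(i,-i)$. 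There are then four combinations for how $\alpha_k$ and $\beta_k$ behave. If both satisfy $\rho_1^j(\cdot)=\rho_2^j(\cdot)$, we are in conclusion (1); if both satisfy the $R_j$-conjugacy relation, we are in conclusion (2), since $R_j$ conjugates $\rho_1^j$ to $\rho_2^j$ on both generators, hence on all of $G_k$. The work is in ruling out the two mixed cases — say $\rho_1^j(\alpha_k)=\rho_2^j(\alpha_k)$ but $R_j\rho_1^j(\beta_k)R_j^{-1}=\rho_2^j(\beta_k)\neq\rho_1^j(\beta_k)$ — where I expect the main obstacle to lie.

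To handle a mixed case I would use the "Moreover" clause of Lemma \ref{generators}, which is exactly what the authors flagged as the reason for recording the extra matrix-coefficient data. Concretely, consider the curve $\alpha_k\beta_k\in W(\alpha_1)$ (it is simple, non-separating, and disjoint from $\alpha_1$ since $k\neq 1$, or one arranges this by replacing $k$-indexed curves suitably): its image under $\rho_i^j$ has a trace that can be computed from the coefficients of $\rho_i^j(\alpha_k)$ and $\rho_i^j(\beta_k)$. Writing $\rho_1^j(\alpha_k)=\begin{pmatrix} a&b\\ c&d\end{pmatrix}$ and $\rho_1^j(\beta_k)=\begin{pmatrix} e&f\\ g&h\end{pmatrix}$, in the mixed case we would have $\rho_2^j(\alpha_k)=\rho_1^j(\alpha_k)$ while $\rho_2^j(\beta_k)=\begin{pmatrix} e&-f\\ -g&h\end{pmatrix}$, and the "Moreover" clause forces a relation like $e/h=-d/a$ coming from applying the lemma with $\delta=\beta_k$ and a suitable auxiliary $\gamma\in G_j$; then computing $\mathrm{Tr}(\rho_1^j(\alpha_k\beta_k))$ and $\mathrm{Tr}(\rho_2^j(\alpha_k\beta_k))$ and using that these must agree up to sign (since $\alpha_k\beta_k\in W(\alpha_1)$), together with the non-vanishing of all coefficients from Lemma \ref{basic comp}, should yield a contradiction — either forcing $f=g=0$ or forcing one of the traces $\mathrm{Tr}(\rho_1^j(\alpha_k))$, $\mathrm{Tr}(\rho_1^j(\beta_k))$ to vanish. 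The symmetric mixed case is handled identically with the roles of $\alpha_k,\beta_k$ reversed. The delicate point is choosing the auxiliary element $\gamma\in G_j$ with $\gamma,\gamma\delta\in W(\alpha_1)$ so that the coefficient relation from the "Moreover" clause is non-degenerate (i.e. so that $a\neq 0$, and the resulting linear relation is not automatically satisfied); as in the proof of Lemma \ref{generators}, replacing a candidate $\gamma$ by $[\alpha_j,\beta_j]^{\pm1}\gamma$ adjusts the diagonal entries by $\lambda^{\pm1}$ and lets one arrange this.
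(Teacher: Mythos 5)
Your reduction is the same as the paper's first step: apply Lemma \ref{generators} with $\delta=\alpha_k$ and $\delta=\beta_k$, observe that the ``both agree'' and ``both $R_j$-conjugate'' alternatives give conclusions (1) and (2) respectively, and note that everything hinges on excluding the mixed case, say $\rho_2^j(\alpha_k)=\rho_1^j(\alpha_k)$ while $\rho_2^j(\beta_k)=R_j\rho_1^j(\beta_k)R_j^{-1}\ne\rho_1^j(\beta_k)$. The gap is in how you propose to exclude it. The ``Moreover'' clause of Lemma \ref{generators} gives $e/h=-d/a$ where $a,d$ are the diagonal entries of $\rho_1^j(\gamma)$ for an auxiliary $\gamma\in G_j$ with $\gamma,\gamma\delta\in W(\alpha_1)$ --- not the diagonal entries of $\rho_1^j(\alpha_k)$, which is how you use them. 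Worse, in the mixed case $\alpha_k$ falls in the agreement alternative, so Lemma \ref{generators} gives you no coefficient relation involving $\rho_1^j(\alpha_k)$ at all; the relation you extract for $\beta_k$ therefore cannot be played off against the trace of $\rho_i^j(\alpha_k\beta_k)$. And even that trace comparison is inconclusive: writing the entries of $\rho_1^j(\alpha_k)$ as $a,b,c,d$ and those of $\rho_1^j(\beta_k)$ as $e,f,g,h$, the condition $\Tr(\rho_1^j(\alpha_k\beta_k))=\pm\Tr(\rho_2^j(\alpha_k\beta_k))$ only says $bg+cf=0$ or $ae+dh=0$, neither of which contradicts the nonvanishing of individual coefficients from Lemma \ref{basic comp}. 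So the crucial contradiction is asserted (``should yield a contradiction'') but not actually available from the curves and relations you specify.

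For comparison, the paper rules out the mixed case without the ``Moreover'' clause (which is reserved for the genus-two argument of Lemma \ref{genus two case}). It invokes Lemma \ref{equal on Gi} to produce $K\in{\rm SL}(2,\mathbb C)$ with $\rho_1^j=K\rho_2^jK^{-1}$ on $G_k$; if $K\ne\pm I$, the mixed-case identities force $K$ to fix the fixed points of $\rho_1^j(\alpha_k)$. Then Lemma \ref{generators} is applied a second time with the roles of $j$ and $k$ exchanged, to the pair $\rho_1^j$ and $K\rho_2^jK^{-1}$ (which agree on $G_k$), evaluated on $\alpha_j$ and $\beta_j$: if they agree on one of these, $K$ also fixes the fixed points of $\rho_1^j(\alpha_j)$ or $\rho_1^j(\beta_j)$, forcing $K=\pm I$; if they disagree on both, then $\rho_1^j$ and $R_kK\rho_2^jK^{-1}R_k^{-1}$ agree on $G_j$, forcing $K=\pm R_k^{-1}$ and again $K=\pm I$ by a fixed-point count --- a contradiction either way. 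If you want to keep a purely computational route, you would need constraints tying the entries of $\rho_1^j(\alpha_k)$ to those of $\rho_1^j(\beta_k)$ coming from further curves in $W(\alpha_1)$ that mix $G_j$ and $G_k$ (as the paper's second application of Lemma \ref{generators} implicitly does); the single curve $\alpha_k\beta_k$ does not suffice.
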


\begin{proof}
If neither (1) or (2) holds, then
Lemma \ref{generators}  implies that, perhaps after switching $\alpha_k$ and $\beta_k$,
$\rho_1^j([\alpha_j,\beta_j])$ is hyperbolic,
$$\rho_2^j(\alpha_k) = \rho_1^j(\alpha_k) \qquad{\rm and}\qquad \rho_2^j(\beta_k)=R_j\rho_1(\beta_k)R_j^{-1}$$
where $R_j$ is a lift of the rotation of order two about the axis
of $\rho_1^j([\alpha_j,\beta_j])$.

Lemma \ref{equal on Gi} implies that  there exists $K\in {\rm SL}(2,\mathbb C)$ so that 
$\rho^j_1$ and $K\rho^j_2K^{-1}$ agree on $G_k$. If $K=\pm I$, then $\rho_1^j$ and $\rho_2^j=K\rho_2^jK^{-1}$
agree on $G_k$ and we are in case (1). So, we may assume that $K\ne\pm I$ and
$$\rho^j_1(\alpha_k)=K\rho^j_2(\alpha_k)K^{-1}=K\rho^j_1(\alpha_k)K^{-1}\quad {\rm and}\quad
\rho_1^j(\beta_k)=K\rho_2^j(\beta_k)K^{-1}=KR_j\rho_1^j(\beta_k)R_j^{-1}K^{-1}.$$
It follows that $K$ fixes the fixed points of $\rho^j_1(\alpha_k)$ and  that $KR_j$ fixes the fixed points	
of $\rho^j_1(\beta_k)$. 

If $\rho^j_1$ and $K\rho_2^jK^{-1}$ agree on either $ \alpha_j$ or $\beta_j$,
then, since $\rho_1^j$ and  $\rho_2^j$ agree on $G_j$, either
$$\rho^j_1(\alpha_j)=K\rho_2^j(\alpha_j)K^{-1}=K\rho_1^j(\alpha_j)K^{-1}\ \ {\rm or}\ \
\rho^j_1(\beta_j)=K\rho_2^j(\beta_j)K^{-1}=K\rho_1^j(\beta_j)K^{-1}$$
which implies that $K$ fixes the fixed points of either $\rho^j_1(\alpha_j)$ or $\rho_1^j(\beta_j)$.
But, since $K$ fixes the fixed points of $\rho^j_1(\alpha_k)$,  and the fixed points of
$\rho_1^j(\alpha_k)$, $\rho^j_1(\alpha_j)$ and $\rho_1^j(\beta_j)$ are all distinct,
this implies that $K=\pm I$,
which is a contradiction.

Therefore, $\rho^j_1$ and $K\rho_2^jK^{-1}$ disagree on both $ \alpha_j$ and $\beta_j$.
Lemma \ref{generators} then implies that  $\rho_1^j([\alpha_k,\beta_k])$ is hyperbolic
and 
$$\rho^j_1(\alpha_j)=R_kK\rho_2(\alpha_j)K^{-1}R_k^{-1}\quad{\rm and}\quad
\rho_1^j(\beta_j)=R_kK\rho_2^j(\beta_j)K^{-1}R_k^{-1}$$
where $R_k$ is a lift of the rotation of order two about the axis of $\rho_1^j([\alpha_k,\beta_k])$.
Therefore, $\rho^j_1$ and $R_kK\rho_2^jK^{-1}R_k^{-1}$ agree on $G_j$.
Since $\rho_1^j$ and $\rho_2^j$ agree on $G_j$ this implies that  $R_kK=\pm I$, so $K=\pm R_k^{-1}$. 
Therefore, $K$ fixes the fixed points of $\rho_1^j([\alpha_k,\beta_k])$.
However, since we already know that $K$ fixes the fixed points of $\rho^j_1(\alpha_k)$, this implies that $K=\pm I$,
which is again a contradiction.
\end{proof}

It is now relatively simple to use Lemma \ref{step one} to complete the proof when $S$ has genus
at least three.

\begin{lemma}
\label{general case}
Suppose that $S$ has genus $g\ge 3$  and  that $\tilde\rho_1:\pi_1(S)\to {\rm SL}(2,\mathbb C)$ and 
\hbox{$\tilde\rho_2:\pi_1(S)\to {\rm SL}(2,\mathbb C)$}
are trace normalized lifts, with respect to a standard generating set $\mathcal S$, of Kleinian
surface groups with the same marked complex length spectrum on $W(\alpha_1)$.
Then,
$\tilde\rho_1$ is conjugate to $\tilde\rho_2$ in ${\rm SL}(2,\mathbb C)$.
\end{lemma}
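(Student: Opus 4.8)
The plan is to first use Lemma \ref{equal on Gi} to conjugate $\tilde\rho_1$ so that it agrees with $\tilde\rho_2$ on $G_1=\langle\alpha_1,\beta_1\rangle$, then to propagate this agreement (up to an order two rotation) to every $G_k$ via Lemma \ref{step one}, and finally to rule out the rotation ambiguity by a coincidence-of-axes argument in which the genus hypothesis is used. Throughout, ``trace normalized lift'' and ``same marked complex length spectrum on $W(\alpha_1)$'' are preserved under conjugation by elements of ${\rm SL}(2,\mathbb C)$ fixing traces on $\mathcal S$, so these hypotheses persist under the manipulations below.

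\emph{Step 1 (normalize on $G_1$, then propagate).} By Lemma \ref{equal on Gi} with $j=1$, after replacing $\tilde\rho_1$ by a conjugate we may assume $\tilde\rho_1$ and $\tilde\rho_2$ agree on $G_1$. Apply Lemma \ref{step one} with $j=1$ to each $k\in\{1,\dots,g\}$: either $\tilde\rho_1$ and $\tilde\rho_2$ agree on $G_k$, or $\tilde\rho_1([\alpha_1,\beta_1])$ is hyperbolic and $\tilde\rho_1|_{G_k}=(R_1\tilde\rho_2 R_1^{-1})|_{G_k}$, where $R_1\in{\rm SL}(2,\mathbb C)$ is a lift of the order two rotation about the axis of $\tilde\rho_1([\alpha_1,\beta_1])$. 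If the first alternative holds for every $k$, then $\tilde\rho_1=\tilde\rho_2$ on a generating set, hence $\tilde\rho_1=\tilde\rho_2$, and we are done. So assume the second alternative holds for some index $k_0$; note $k_0\neq 1$, since $\tilde\rho_1$ and $\tilde\rho_2$ already agree on $G_1$.

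\emph{Step 2 (an auxiliary lift, disagreeing on $G_1$).} Set $\rho_3=R_1\tilde\rho_2 R_1^{-1}$ (well defined independently of the sign of $R_1$). Then $\rho_3$ is conjugate to $\tilde\rho_2$, hence is a trace normalized lift, with respect to $\mathcal S$, of a Kleinian surface group having the same marked complex length spectrum on $W(\alpha_1)$ as $\tilde\rho_1$, and $\rho_3$ agrees with $\tilde\rho_1$ on $G_{k_0}$ by the choice of $k_0$. I claim $\rho_3$ and $\tilde\rho_1$ \emph{disagree} on $G_1$: on $G_1$ we have $\rho_3=R_1\tilde\rho_2 R_1^{-1}=R_1\tilde\rho_1 R_1^{-1}$, and if this equalled $\tilde\rho_1$ on $G_1$ then the nontrivial element $R_1$ would commute with the hyperbolic elements $\tilde\rho_1(\alpha_1)$ and $\tilde\rho_1(\beta_1)$; since the centralizer of a hyperbolic element of ${\rm PSL}(2,\mathbb C)$ consists of the elements with the same fixed point pair, $\tilde\rho_1(\alpha_1)$ and $\tilde\rho_1(\beta_1)$ would then share a fixed point pair, hence commute, contradicting faithfulness of $\tilde\rho_1$ on the rank two free group $G_1$.

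\emph{Step 3 (conclusion).} Apply Lemma \ref{step one} to the pair $(\tilde\rho_1,\rho_3)$ with $j=k_0$ and $k=1$. These lifts agree on $G_{k_0}$ but disagree on $G_1$, so the second alternative must hold: $\tilde\rho_1([\alpha_{k_0},\beta_{k_0}])$ is hyperbolic and $\tilde\rho_1|_{G_1}=(R_{k_0}\rho_3 R_{k_0}^{-1})|_{G_1}$, where $R_{k_0}$ is a lift of the order two rotation about the axis of $\tilde\rho_1([\alpha_{k_0},\beta_{k_0}])$. Combining with $\rho_3=R_1\tilde\rho_1 R_1^{-1}$ on $G_1$ shows that $R_{k_0}R_1$ commutes with $\tilde\rho_1(\alpha_1)$ and $\tilde\rho_1(\beta_1)$; as in Step 2 a nontrivial such element is impossible, so $R_{k_0}R_1=\pm I$, i.e. $R_{k_0}$ and $R_1$ project to the same order two rotation. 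Hence the hyperbolic elements $\tilde\rho_1([\alpha_1,\beta_1])$ and $\tilde\rho_1([\alpha_{k_0},\beta_{k_0}])$ have the same axis, so they commute; since $\tilde\rho_1(\pi_1(S))$ is discrete and $\tilde\rho_1$ is faithful, $[\alpha_1,\beta_1]$ and $[\alpha_{k_0},\beta_{k_0}]$ have a common power in $\pi_1(S)$. Both are primitive, being represented by simple closed curves, so $[\alpha_1,\beta_1]=[\alpha_{k_0},\beta_{k_0}]^{\pm1}$, which forces the boundary curves of the first and $k_0$-th handles to be freely homotopic. When $g\ge 3$ and $k_0\neq 1$ these are disjoint, non-isotopic essential simple closed curves, a contradiction. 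Therefore the second alternative never occurs, $\tilde\rho_1=\tilde\rho_2$, and the lemma follows. The main obstacle is Step 2: one must manufacture, from a single bad index $k_0$, a second pair of lifts on which Lemma \ref{step one} can be rerun; the genus hypothesis enters only in the final incommensurability step, which is exactly where the argument breaks when $g=2$ (as then $[\alpha_1,\beta_1]=[\alpha_2,\beta_2]^{-1}$), explaining the need for the separate Lemma \ref{genus two case}.
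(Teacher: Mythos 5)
Your proof is correct and follows essentially the same route as the paper: normalize to agree on $G_1$ via Lemma \ref{equal on Gi}, apply Lemma \ref{step one} twice (first from $j=1$, then from the bad handle back to $k=1$ for the pair $(\tilde\rho_1,R_1\tilde\rho_2R_1^{-1})$), and rule out both outcomes by the centralizer argument and the coincidence of the commutator axes, which is where $g\ge 3$ enters. Your Step 2 merely repackages the paper's first contradiction case as establishing disagreement on $G_1$, and you keep a general index $k_0$ where the paper takes $k_0=2$ without loss of generality; these are cosmetic differences.
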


\begin{proof} 
Lemma \ref{equal on Gi} implies that we may choose conjugates
$\rho^1_1$ and $\rho^1_2$ of  $\tilde \rho_1$ and $\tilde\rho_2$ which agree
on $G_1 = <\alpha_j,\beta_j>$ .
Lemma \ref{step one} implies that for all $j>1$, the restrictions of $\rho_1^1$ and $\rho_2^1$ to $G_j$
either agree or are conjugate by $R_1$, where $R_1$ is a lift of the rotation of order two
about the axis of $\rho_1^1([\alpha_1,\beta_1])$. If $\rho_1^1\ne\rho_2^1$, we may assume without
loss of generality that  $\rho^1_1|_{G_2} = (R_1\rho^1_2 R_1^{-1})|_{G_2}$.

Lemma \ref{step one} implies that either (1)
$\rho_1^1$ and $ R_1\rho_2^1R_1^{-1}$ agree on $G_1$,  or (2) $\rho_2^1([\alpha_2,\beta_2])$ is hyperbolic and
$\rho_1^1$ and $ R_1\rho_2^1R_1^{-1}$ are
conjugate by $R_2$ on $G_1$ where $R_2$ is a lift of the rotation of order two about the axis of $\rho^1_1([\alpha_2,\beta_2])$.

If $\rho_1^1$ and $R_1\rho_2^1R_1^{-1}$  agree on $G_1$, then, since  $\rho^1_1$ 
and $\rho^1_2$ agree on $G_1$, $R_1$ commutes with every element of $\rho_1^1(G_1)$.
Since  $\rho_1^1(G_1)$ is non-elementary, this implies that $R_1=\pm I$, which is a contradiction.
 
If $\rho^1_1$ and $R_2(R_1\rho_2^1R_1^{-1})R_2^{-1}$  agree on $G_1$, then
since $\rho_1^1$ and $\rho^1_2$ agree on $G_1$, we similarly
conclude that  $R_2R_1 = \pm I$. So,  $R_2 =\pm R_1^{-1}$. This would imply that
$\rho^1_1([\alpha_1,\beta_1])$ and $ \rho^1_1([\alpha_2,\beta_2])$ have the same axis, 
so share fixed points,
which is a contradiction unless $S$ has genus 2 and $[\alpha_1,\beta_1]=[\alpha_2,\beta_2]^{-1}$.
\end{proof}

We now complete the proof by establishing our result in the genus two case.

\begin{lemma}
\label{genus two case}
Suppose that $S$ has genus $g=2$ and that $\tilde\rho_1:\pi_1(S)\to {\rm SL}(2,\mathbb C)$ and 
\hbox{$\tilde\rho_2:\pi_1(S)\to {\rm SL}(2,\mathbb C)$}
are trace normalized lifts, with respect to a standard generating set $\mathcal S$, of Kleinian
surface groups with the same marked complex length spectrum on $W(\alpha_1)$.
Then, $\tilde{\rho}_1$ is conjugate to $\tilde{\rho}_2$ in ${\rm SL}(2,\mathbb C)$.
\end{lemma}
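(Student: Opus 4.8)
The plan is to reproduce the opening of the proof of Lemma \ref{general case} and then exploit the single extra relation that genus two affords. By Lemma \ref{equal on Gi} we may replace $\tilde\rho_1,\tilde\rho_2$ by conjugate lifts $\rho_1,\rho_2$ which agree on $G_1=\langle\alpha_1,\beta_1\rangle$, and Lemma \ref{step one}, applied with $j=1$ and $k=2$, tells us that either $\rho_1$ and $\rho_2$ also agree on $G_2$, or $\rho_1([\alpha_1,\beta_1])$ is hyperbolic and $\rho_1|_{G_2}=(R_1\rho_2 R_1^{-1})|_{G_2}$, where $R_1$ is a lift of the rotation of order two about the axis of $\rho_1([\alpha_1,\beta_1])$. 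In the first case $\pi_1(S)=\langle G_1,G_2\rangle$ forces $\rho_1=\rho_2$, so $\tilde\rho_1$ and $\tilde\rho_2$ are conjugate and we are done. So I would assume we are in the second case, normalize so that $\rho_1([\alpha_1,\beta_1])=\rho_2([\alpha_1,\beta_1])=\mathrm{diag}(\lambda,\lambda^{-1})$ with $|\lambda|>1$ (so that conjugation by $R_1$ negates off-diagonal entries), and aim for a contradiction.

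The genus two hypothesis enters through the relation $[\alpha_2,\beta_2]=[\alpha_1,\beta_1]^{-1}$: it gives $\rho_1([\alpha_2,\beta_2])=\mathrm{diag}(\lambda^{-1},\lambda)$, which has the \emph{same} axis as $\rho_1([\alpha_1,\beta_1])$, so $R_1$ is at the same time a lift of the rotation of order two about the axis of $\rho_1([\alpha_2,\beta_2])$, and after interchanging $0$ and $\infty$ this commutator is in the normalized form of Lemma \ref{generators} with the same $\lambda$. Thus the whole configuration is symmetric under interchanging $G_1$ and $G_2$, with $R_1$ serving as both $R_1$ and $R_2$; this coincidence of axes is precisely what the proof of Lemma \ref{general case} could exclude when $g\ge 3$ but cannot exclude here. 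Since $R_1\neq\pm I$ and $\rho_1(G_2)$ is non-elementary, $\rho_1$ and $\rho_2$ genuinely differ on $G_2$, so at least one of $\alpha_2,\beta_2$, say $\delta$, falls under alternative (2) of Lemma \ref{generators}, with $\rho_2(\delta)=R_1\rho_1(\delta)R_1^{-1}$.

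Now I would invoke the matrix-coefficient refinement in alternative (2) of Lemma \ref{generators} — the information that, as promised, is used only in genus two. In the present normalization it says that $\frac{(\rho_1(\delta))_{11}}{(\rho_1(\delta))_{22}}=-\frac{(\rho_1(\gamma))_{22}}{(\rho_1(\gamma))_{11}}$ for \emph{every} $\gamma\in G_1$ with $\gamma,\gamma\delta\in W(\alpha_1)$; since the left side is independent of $\gamma$, the ratio of the diagonal entries of $\rho_1(\gamma)$ is the same for all admissible test curves $\gamma$. Applying this first to $\gamma$ (one of $\beta_1^{\pm1}$, as produced in Case I of the proof of Lemma \ref{generators}) and then to $\gamma_0=[\alpha_1,\beta_1]^{\pm1}\gamma$ — the image of $\gamma$ under a Dehn twist about $[\alpha_1,\beta_1]$, which is again simple and non-separating and for which $\gamma_0,\gamma_0\delta\in W(\alpha_1)$ by the construction in Case Ia of Lemma \ref{generators} (Figure \ref{switch}) — and observing that the diagonal entries of $\rho_1(\gamma_0)=\rho_1([\alpha_1,\beta_1])^{\pm1}\rho_1(\gamma)$ are $\lambda^{\pm1}$ and $\lambda^{\mp1}$ times those of $\rho_1(\gamma)$, one is forced to $\lambda^{\mp2}=1$, contradicting $|\lambda|>1$. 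Hence the second case never occurs, $\rho_1=\rho_2$, and $\tilde\rho_1$ is conjugate to $\tilde\rho_2$. I expect the hard part to be the bookkeeping rather than any estimate: one must track which of $\alpha_1,\beta_1,\alpha_2,\beta_2$ sits in alternative (2) of Lemma \ref{generators} and, when necessary, use the $G_1\leftrightarrow G_2$ symmetry together with the accompanying coordinate swap of $0$ and $\infty$ to supply the second family of relations, and one must confirm the purely topological admissibility of the twisted test curves $[\alpha_j,\beta_j]^{\pm1}\beta_j$ and their $G_2$-analogues, which is the content of Figures \ref{dehntwist} and \ref{switch}.
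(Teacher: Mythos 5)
Your proof is correct in substance, but it takes a genuinely different route from the paper's. The paper fixes a single test curve, $\gamma=\alpha_1^{-1}$, and varies the element of $G_2$: applying the ratio identity of Lemma \ref{generators}(2) to $\delta=\alpha_2$, to $\beta_2$, and (after passing to the modified standard generating set $\mathcal S'=\{\alpha_1,\beta_1,\alpha_2\beta_2,\alpha_2^{-1}\}$) to $\alpha_2\beta_2$, it gets $c_{11}/c_{22}=d_{11}/d_{22}=e_{11}/e_{22}=-a_{22}/a_{11}$; the genus-two relation $[\alpha_2,\beta_2]=[\alpha_1,\beta_1]^{-1}$ then pins this common ratio to $\lambda$ via $\Tr(CD)=\Tr(DC)=\Tr(M^{-1}CD)$, and the commutation $CD=MDC$ forces $(\lambda^2-1)c_{12}d_{22}=0$, the contradiction. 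You instead fix one $\delta\in\{\alpha_2,\beta_2\}$ on which the lifts differ and vary the test curve, using that the identity $e/h=-d/a$ in alternative (2) holds for \emph{every} admissible $\gamma$; comparing $\gamma=\beta_1^{\pm1}$ with $\gamma_0=[\alpha_1,\beta_1]^{\pm1}\gamma$ multiplies the right-hand side by $\lambda^{\mp2}$ and gives $\lambda^{\pm2}=1$ at once (you should also record that $a,d,e,h\neq0$, by the coprimality argument of Lemma \ref{basic comp} in the frame diagonalizing the commutator, so the ratios are legitimate). Two observations about what each approach buys: your argument is shorter, and — despite your framing — it never actually uses the genus-two relation, so as written it excludes alternative (2) of Lemma \ref{generators} in every genus and would render Lemma \ref{general case} unnecessary as well; this extra strength means everything rests on the admissibility of the twisted test curve, i.e.\ that one of $[\alpha_1,\beta_1]^{\pm1}\gamma$ is simple with $\gamma_0\delta\in W(\alpha_1)$ (precisely the Figure \ref{switch} assertion the paper itself invokes in Case Ia), together with the fact that Case Ib of that proof really does yield the ratio identity for all admissible $\gamma$ (it does, since the dichotomy between equations (\ref{equaltraces}) and (\ref{differtraces}) is determined by $\delta$ alone because $f_i,g_i\neq0$). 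The paper's proof is more conservative: one test curve per element of $G_2$, with the needed second relation supplied by the surface group relation at the cost of the $\mathcal S'$ bookkeeping; yours trades that for a second test curve and a purely local contradiction.
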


\begin{proof}
In this case $\pi_1(S) = < \alpha_1, \beta_1, \alpha_2, \beta_2>$ given by the standard oriented curves as in Figure \ref{g2}.
Lemma \ref{equal on Gi} implies that we may conjugate  $\tilde{\rho}_i$ to $\rho^1_i$ so
that $\rho_1^1$ and  $\rho_2^1$ agree on $G_1$. 

Suppose that $\rho_1^1\ne\rho^1_2$.
Lemma \ref{step one} implies that $\rho_1([\alpha_1,\beta_1])$ is hyperbolic and
$\rho_1^1$ and $R_1\rho_2^1R_1^{-1}$ agree on $G_2$ where $R_1$ is a lift of the rotation of order two about the
axis of  $\rho_1([\alpha_1,\beta_1])$. We may normalize so that
$$\rho^1_1([\alpha_1,\beta_1]) = \rho^1_2([\alpha_1,\beta_1]) = \left( \begin{array}{cc}
\lambda & 0\\
0 &   \lambda^{-1}\end{array}\right) 
$$ 
where $|\lambda| > 1.$

\begin{figure}[htbp] 
   \centering
   \includegraphics[width=5in]{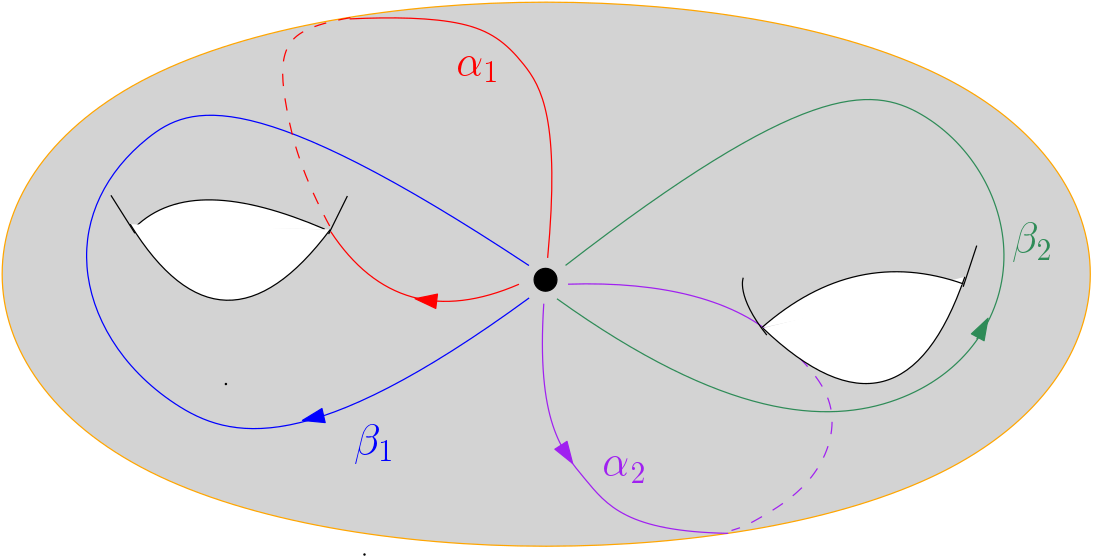} 
   \caption{Surface of genus 2 with standard generators}
   \label{g2}
\end{figure}

Let $\gamma = \alpha_1^{-1}$.  Notice that, assuming that elements of $\mathcal S$ are oriented as
in Figure \ref{g2},  $\gamma \alpha_2$ and $\gamma\beta_2$  are 
simple and lie in $W(\alpha_1)$.
Define $A = \rho^1_1(\alpha_1^{-1})$,  $C = \rho^1_1(\alpha_2)$,  and $ D = \rho^1_1(\beta_2)$, and
adapt the convention that if a matrix is denoted by capital letter $X$, then it has coefficients $(x_{ij})$.
Since $\rho_1^1$ and $\rho_2^1$ differ on both $\alpha_2$ and $\beta_2$,
Lemma  \ref{generators} implies that 
$$\frac{c_{11}}{c_{22}} = \frac{d_{11}}{d_{22}} =  -\frac{a_{22}}{a_{11}}.$$

We now consider the element $ \alpha_2\beta_2 \in G_2$. We replace generators $\{\alpha_2, \beta_2\}$ by 
$\{\alpha_2\beta_2, \alpha_2^{-1}\}$ in $\mathcal S$ to form another standard generating set 
${\mathcal S}' = \{\alpha_1,\beta_1,  \alpha_2\beta_2, \alpha_2^{-1}\}$ 
(we note that $[\alpha_2\beta_2,\alpha_2^{-1}] = [\alpha_2,\beta_2]$). The representations $\rho^1_1$ and $\rho^1_2$
agree  on $G_1$ and their traces agree on ${\mathcal S}'$.
Notice that  $\gamma(\alpha_2\beta_2) \in W(\alpha_1)$ (see Figure \ref{g2}) and
let $E= CD =\rho_1(\alpha_2\beta_2)$. Since $\rho_1^1$ and $\rho_2^1$ agree on $G_1$ and disagree on
$\alpha_2\beta_2$,
Lemma  \ref{generators} implies that
$$\frac{e_{11}}{e_{22}} = -\frac{a_{22}}{a_{11}}.$$
If $M =CDC^{-1}D^{-1}=\rho_1^1([\alpha_2,\beta_2]^{-1})=\rho_1^1([\alpha_1,\beta_1])$, then 
$$M = \left( \begin{array}{cc}
\lambda & 0\\
0 & \lambda^{-1} \end{array}\right) 
$$
Since $DC = M^{-1}CD = M^{-1}E$ and $C^{-1}(CD)C=DC$,
$$\Tr(DC) = \Tr(CD) = \Tr(E) = \Tr(M^{-1}E),$$
so $e_{11}+e_{22} =  \lambda^{-1} e_{11} + \lambda e_{22}$. Therefore
$$\frac{c_{11}}{c_{22}} = \frac{d_{11}}{d_{22}} =\frac{e_{11}}{e_{22}} = \frac{(\lambda-1)}{(1-\lambda^{-1})} = \lambda.$$
Since $CD = MDC$,
$$ (CD)_{12} = c_{11}d_{12} + c_{12}d_{22} = (MDC)_{12} = \lambda(DC)_{12} = \lambda(d_{11}c_{12}+d_{12}c_{22}),$$
so, since $c_{11} = \lambda c_{22}$ and $d_{11} = \lambda d_{22}$, 
$$\lambda c_{22}d_{12}+ c_{12}d_{22} = \lambda(\lambda d_{22}c_{12}+ d_{12}c_{22}),$$
which implies that
$$(\lambda^2 -1)c_{12}d_{22} = 0.$$
Thus, since $d_{22},$ and $c_{12}$ are both non-zero, $\lambda^{2} = 1$ which is a contradiction. 
Therefore, it must be the case that $\rho_1^1=\rho_2^1$.
\end{proof}

\medskip\noindent
{\bf Remark:}  Observe that in the proof we only assume that  there exists a simple non-separating
curve $\alpha$ such that $\rho_1$ and $\rho_2$ have the same or conjugate marked complex
length spectrum on $W(\alpha)$. It then follows from the remark at the end of Section \ref{complex length}
that $\rho_1$ and $\rho_2$ are conjugate in the isometry group of $\mathbb H^3$
if  they have the same marked length spectrum on all simple, non-separating curves.

\section{Acylindrical 3-manifolds}
\label{acylindrical}

We now use our main result to prove our length rigidity theorem
for acylindrical \hbox{3-manifolds}.

\medskip\noindent
{\bf Theorem \ref{acyl app}.}
{\em If $M$ is a compact, acylindrical, hyperbolizable  3-manifold, and \hbox{$\rho_1:\pi_1(M)\to {\rm PSL}(2,\mathbb C)$}
and $\rho_2:\pi_1(M)\to {\rm PSL}(2,\mathbb C)$ are two discrete faithful representations, such
that $\ell_{\rho_1}(\alpha)=\ell_{\rho_2}(\alpha)$ if $\alpha\in\pi_1(M)$ is represented by a simple
closed curve on $\partial M$, then $\rho_1$ is conjugate to either $\rho_2$ or $\bar\rho_2$.}

\medskip
In order to motivate the more general proof, we offer a sketch of the proof in the case 
that $M$ has connected boundary and $\rho_1$ and $\rho_2$
are both convex cocompact, i.e. each \hbox{$N_i=\mathbb H^3/\rho_i(\pi_1(M))$} admits a conformal
compactification with boundary $\partial_cN_i$.
After perhaps replacing $\rho_2$ with a complex conjugate representation, we may assume
by Johannson's Classification Theorem  \cite{johannson},
that there exists an orientation-preserving
homeomorphism $j:N_1\to N_2$ in the homotopy class determined by $\rho_2\circ\rho_1^{-1}$.
In this case, work of Bers \cite{bers-survey} implies that $\rho_1$ is conjugate to $\rho_2$ if and only if
$j$ extends, up to isotopy, to a conformal homeomorphism $\hat j:\partial_cN_1\to\partial_cN_2$.

If the length spectra of $\rho_1$ and $\rho_2$ agree on simple  curves on the boundary, our main result
implies that the restrictions  of $\rho_1$  and $\rho_2$ to the fundamental group of the boundary agree up to conjugacy in
in the isometry group of $\mathbb H^3$. It follows that the conformal compactifications $N^S_1$ and $N^S_2$ of the covers of
$N_1$ and $N_2$ associated to $\pi_1(S)$ are either conformal or anti-conformally homeomorphic
(by a homeomorphism in the homotopy class consistent with the identifications of their fundamental groups
with $\pi_1(S)$.)
Notice that $\partial_cN_i$ is identified with one component of the conformal boundary of $N_i^S$.
If the conformal compactifications of $N_1^S$ and $N_2^S$ are conformally homeomorphic, then $j$
extends to a conformal homeomorphism between $\partial_cN_1$ and $\partial_cN_2$, so 
Bers' result implies that
$\rho_1$ and $\rho_2$ are conjugate in ${\rm PSL}(2,\mathbb C)$. If not,
then one may use the Klein-Maskit combination Theorems
(see Maskit \cite{maskit}) to combine $\rho_1$ and  (a conjugate of) $\rho_2$ to produce the holonomy 
representation of a hyperbolic structure
on the double of $M$. Mostow's Rigidity Theorem \cite{mostow} then implies that the involution of the double is
an isometry, so  the restriction of $\rho_1$ and $\rho_2$ to the boundary is Fuchsian, and the conformal
structures on the boundary must agree. 

In the general case, we must organize the components of the boundary
into those where the conformal structures agree and those where they disagree and replace the use
of Bers' theorem and Mostow's Rigidity Theorem with applications of the Ending Lamination Theorem \cite{ELC1,ELC2}.

\medskip\noindent
{\em Proof of Theorem \ref{acyl app}:}
Suppose that $\rho_1:\pi_1(M)\to {\rm PSL}(2,\mathbb C)$ and
$\rho_2:\pi_1(M)\to {\rm PSL}(2,\mathbb C)$ are two discrete faithful representations, such
that $\ell_{\rho_1}(\alpha)=\ell_{\rho_2}(\alpha)$ if $\alpha\in\pi_1(M)$ is represented by a simple
closed curve on $\partial M$.  Let $N_i=\mathbb H^3/\rho_i(\pi_1(M))$. 

There exists a homotopy
equivalence $h_i:M\to N_i$ in the homotopy class determined by $\rho_i$ (where we regard $\rho_i$
as an isomorphism between $\pi_1(M)$ and $\pi_1(N_i)=\rho_i(\pi_1(M))$).
Bonahon's Tameness Theorem
\cite{bonahon-ends} implies that $N_i$ may be identified with the interior of a compact 3-manifold $M_i$.
Since $M$ is acylindrical, Johannson's Classification Theorem
\cite{johannson} implies that $h_i$ is homotopic to a homeomorphism.
Therefore, there exists a homeomorphism $g:M_1\to M_2$ so that $g\circ h_1$ is homotopic to $h_2$, i.e.
$g$ is in the homotopy class determined by $\rho_2\circ\rho_1^{-1}$. If $g$ is not orientation-preserving
then we replace $\rho_2$ with $\bar\rho_2$.  So, we may assume that $g$ is orientation-preserving.

Thurston associates to each $\rho_i$ a family of ending invariants, i.e. a multicurve $p_i$, called the parabolic locus,
in the collection $\partial_0M_i$  of non-toroidal components of $\partial M_i$,
and on each component of $\partial_0M_i-p_i$, either a  hyperbolic structure of finite area (in which
case the component is called geometrically finite) or a filling, geodesic
lamination which admits a transverse measure of full support, called the ending lamination (in which case the component
is called geometrically infinite),
see \cite{ELC1,ELC2} for details. The Ending Lamination Theorem of Minsky \cite{ELC1} and Brock-Canary-Minsky \cite{ELC2}
implies that $\rho_1$ is conjugate to $\rho_2$ in ${\rm PSL}(2,\mathbb C)$ if and only if
the ending invariants of $\rho_1$ agree with the ending invariants of $\rho_2$, i.e., up to isotopy of $g$, $g(p_1)=p_2$, the restriction of $g$ to each geometrically finite component of $\partial_0M_1-p_1$ 
is an isometry onto a geometrically finite component of $\partial_0 M_2-p_2$, and
if a geometrically infinite component of $\partial_0 M_1-p_1$ has ending lamination $\lambda$, then $g(\lambda)$
is the ending lamination of a geometrically infinite component of $\partial_0 M_2-p_2$.

Let $\{S_1,\ldots,S_r\}$ be the components of $\partial_0 M_1$.
Since $\ell_{\rho_1}(\alpha)=\ell_{\rho_2}(\alpha)$ if $\alpha\in\pi_1(M)$ is represented by a simple
closed curve on $\partial M$, our main result, Theorem \ref{main}, implies that, for each $j$, 
$\rho_1|_{\pi_1(S_j)}$ is conjugate  to either $\rho_2$ or $\bar\rho_2$.
If each $\rho_1|_{\pi_1(S_j)}$ is conjugate  to $\rho_2|_{\pi_1(S_j)}$,
then the ending invariants of $\rho_1$ agree with the ending invariants of $\rho_2$, so
$\rho_1$ is conjugate to $\rho_2$ in ${\rm PSL}(2,\mathbb C)$ and we are done.

If there exists $S_j$ so that $\rho_1|_{\pi_1(S_j)}$ is conjugate to $\bar\rho_2|_{\pi_i(S_j)}$, 
then we may re-order the components of $\partial M$,
so that  $\rho_1|_{\pi_1(S_j)}$ is conjugate to $\rho_2|_{\pi_i(S_j)}$  if and only if
$j>q$ for some $q\in \{1,\ldots, r-1\}$. Let $N_i^j$ be the cover of $N_i$ associated to $\pi_1(S_j)$.
Then, $N_i^j$ may be identified with the interior of $S_j\times [0,1]$ so that the end invariants of $N_i^j$ 
on $S_j\times \{ 1\}$ agree with the restriction of the end invariants of $N_i$ restricted to $S_j$.
The Covering Theorem (see \cite{canary-cover}) may be used to show that the restriction of the ending
invariant  of $N_i^j$ to $S_j\times \{0\}$ is a conformal structure on all of $S_j\times \{0\}$
(see the discussion in Kent's proof of Thurston's Bounded Image Theorem \cite[Thm. 41]{kent-skinning}).
If $j\le q$, then the ending invariant of $N_1^j$ on $S_j\times \{1\}$ agree with the end invariants of $N_2^j$
on $S_j\times \{0\}$, while the ending invariant of $N_1^j$ on $S_j\times \{0\}$ agree with the end invariants of $N_2^j$
on $S_j\times \{1\}$. It follows that, if $j\le q$, then the ending invariants of $N_i^j$ are a pair of hyperbolic
structures on $S_j\times \{0\}$ and $S_j\times \{1\}$, so $\rho_i|_{\pi_1(S_j)}$ is quasifuchsian, and the map
from $S_j\times \{0,1\}$ to itself given by taking $(x,t)$ to $(x,1-t)$ is isotopic to an isometry from
the ending invariants of $N_1^j$ to the ending invariants of $N_2^j$.

Let $\hat M$ be the manifold obtained by doubling $M$ along $S_1\cup\cdots \cup S_{q-1}$. We first conjugate
$\bar\rho_2$  so that  $\rho_1(\pi_1(S_1))=\bar\rho_2(\pi_1(S_1))$.
We may use the Klein-Maskit
Combination Theorems \cite{maskit} to see that the combination of   $\rho_1(\pi_1(M))$ and $\bar\rho_2(\pi_1(M))$
gives rise to a hyperbolic structure on the double of $M$ along $S_1$. 
For each $j=2,\ldots,q$, let $A_j\in{\rm PSL}(2,\mathbb C)$
be chosen so that $A_j$ conjugates $\rho_1|_{\pi_1(S_j)}$ to $\rho_2|_{\pi_1(S_j)}$. Then, the Klein-Maskit
Combination Theorems give that the group generated by $\rho_1(\pi_1(M))$, $\rho_2(\pi_1(M))$ and $\{A_2,\ldots A_q\}$
gives rise to a hyperbolic structure on $\hat M$ and so a representation $\hat\rho:\pi_1(\hat M)\to {\rm PSL}(2,\mathbb C)$.
(See Sections 8 and 9 of Morgan \cite{morgan}, in particular Theorem 9.4, for a discussion of
the Klein-Maskit Combination Theorems in a topological phrasing compatible with our application.) 
The obvious involution of $\hat M$ preserves the ending invariants of $\hat N=\mathbb H^3/\hat\rho(\pi_1(\hat M))$,
so, by the Ending Lamination Theorem, there is an isometry of $\hat N$ realizing this involution. By restriction,
$\rho_1$ is conjugate to $\rho_2$.
\qed

\medskip\noindent
{\bf Remark:} In the case that $\partial_0 M$ is connected, Ian Agol astutely pointed out that one may rephrase
this proof as an application of the fact that the square of the skinning map has a unique fixed point,
see Morgan \cite{morgan} and Kent \cite{kent-skinning} for a discussion of the skinning map.

\section{Renormalized Pressure Intersection}
\label{pressure} 

We next show that the isometry group of the renormalized pressure intersection is generated by the
(extended) mapping
class group and complex conjugation. We begin by reinterpreting our renormalized pressure intersection
in terms of the Patterson-Sullivan geodesic  current, following Bridgeman \cite{Bri10} and 
Bridgeman-Taylor \cite{BT08}. Since isometries of the renormalized pressure intersection  are isometries
of the pressure metric and  the only degenerate vectors for the pressure metric are at points
on the Fuchsian locus (see Bridgeman \cite{Bri10}), any isometry $f$ of the renormalized pressure intersection 
must preserve the Fuchsian locus. Since the restriction of the pressure metric to the Fuchsian locus is a multiple of the
Weil-Petersson metric,
we may apply Masur and Wolf's classification of isometries of the Weil-Petersson metric \cite{masur-wolf}, to conclude
that the restriction of $f$ to the Fuchsian locus agrees with the action of a mapping class $g$.
We  then use
Bonahon's interpretation (see \cite{Bon88}) of the Thurston compactification of Teichm\"uller space 
in terms of  geodesic currents, to show that if $\rho\in QF(S)$, then $f(\rho)$ and $g(\rho)$  have proportional
simple marked  length spectrum. The proof is completed by showing that any two Kleinian surface groups
with proportional simple marked length spectrum are conjugate in the isometry group of $\mathbb H^3$.

\subsection{Geodesic Currents}
Identify the space  $G(\Hp)$  of  geodesics in $\Hp$ with the open M\"obius band
$( \Sph^{1}_{\infty} \times  \Sph^{1}_{\infty} -\mbox{ diagonal})/\mathbb Z_2$ by identifying a geodesic 
with its endpoints. A {\em geodesic current} on
a hyperbolic surface $X = \Hp/\Gamma$ is a positive Borel measure on $G( \Hp)$
that is invariant under the action of $\Gamma$. For example, if $\alpha$ is a primitive closed geodesic
on $X$, we may consider the geodesic current $C_\alpha$ given by taking the Dirac measure on
the pre-image of $\alpha$ in $G(\Hp)$.
Let ${\cal C}(X)$ be the space of geodesic currents defined  for $X = \Hp/\C$ with the weak*-topology.
If $S$ is a closed surface of genus at least two, we identify $S$ with a fixed hyperbolic surface $X_0$
and let ${\cal C}(S)={\cal C}(X_0)$.

Following Bonahon \cite{Bon88}, Bridgeman and Taylor \cite[Sec. 5]{BT08} showed that there is a natural continuous, 
linear  function
$$L:{\mathcal C}(S)\times QF(S)\to \mathbb R,$$
called the {\em length function},
with the property that  if $rC_\alpha$ is a positive multiple of
$C_\alpha$, then 
$L(rC_\alpha,\rho)=r\ell_\rho(\alpha).$
Since multiples of closed geodesics are dense
in $\mathcal C(S)$ (see \cite[Prop. 4.2]{bonahon-ends}), this property completely determines $L$.

Given $\rho\in QF(S)$, Sullivan \cite[Prop. 11, Thm. 21]{Sul79}, following work of Patterson \cite{Pat76} in the Fuchsian case,
used the Poincar\'e series to define a  non-atomic $\Gamma$-invariant
measure $\mu_\rho$ on $\partial_\infty\mathbb H^3\times\partial_\infty\mathbb H^3$, called the {\em Patterson-Sullivan measure},
which is supported on $\Lambda(\rho)\times\Lambda(\rho)$, where $\Lambda(\rho)$ is the limit set of $\rho(\pi_1(S))$,
such that $\Gamma$ acts ergodically on $\partial_\infty\mathbb H^3\times\partial_\infty\mathbb H^3$ with respect to $\mu_\rho$
(see also Sullivan \cite[Thm. 3]{sullivan-entropy}). One may push forward $\mu_\rho$ to obtain a $\Gamma$-invariant
measure $\hat\mu_\rho$ on $\Lambda(\rho)\times\Lambda(\rho)/\mathbb Z_2$ (where $\mathbb Z_2$ acts by 
interchanging the factors).
The representation $\rho$ induces a homeomorphism $f_\rho:S^1\to \Lambda(\rho)$, where we use our identification
of $S$ with $X_0$ to identify $S^1$ with $\partial_\infty \mathbb H^2$, so one obtains a homeomorphism
$\widehat{f_\rho\times f_\rho}:\mathcal G(\mathbb H^2)\to \Lambda(\rho)\times\Lambda(\rho)/\mathbb Z_2$.
One then defines
the {\em Patterson-Sullivan current} as the scalar multiple $\psi(\rho)\in\mathcal C(S)$ of
 $\widehat{(f_\rho\times f_\rho)}_*(\hat\mu_\rho)$  so that $L(\psi(\rho),\rho)=1$.
If $\rho$ is Fuchsian, then $\psi(\rho)$ is the Liouville geodesic current constructed by Bonahon \cite{Bon88}
(see \cite[Thm. 1]{Ham02}).
Hamenstadt \cite[Thm. 1]{Ham02} showed that the associated map 
$$\psi:QF(S)\to \mathcal{C}(S)$$
is continuous and injective. 
See Bridgeman \cite{Bri10}, Bridgeman-Taylor \cite{BT08} or Hamenstadt \cite{Ham02} for further discussion 
of the Patterson-Sullivan geodesic current. 

Let $F(S)$ denote the space of Fuchsian representations in $QF(S)$ and let $\mathcal{PC}(S)$ denote
the space of projective classes of non-zero currents in $\mathcal C(S)$.
Bonahon \cite[Thm. 18]{Bon88} showed that if one considers the associated map into 
the space of projective geodesic currents $\bar\psi:F(S)\to \mathcal{PC}(S)$, then
the closure of $\bar\psi(F(S))$ is $\bar\psi(F(S))\cup PML(S)$ where $PML(S)$ is the space of projective classes of
measured laminations. (We recall that  the set $ML(S)$ of measured laminations in $\mathcal C(S)$ is exactly the
closure of the set of positive multiples of currents associated to simple curves, see Bonahon \cite[Sec. 4.3]{bonahon-ends}
for more details.)

We note that $L(\psi(\rho_1),\rho_2)$ can also be defined to be the length in $\rho_2$ of a random geodesic in $\rho_1$, i.e.
\begin{equation}
\label{Lformula}
L(\psi(\rho_1),\rho_2) =\lim_{T\rightarrow \infty} \frac{1}{\# \{[\gamma]\ | \ \ell_{\rho_1}(\gamma)\le T\} } 
\sum_{\{[\gamma]\ | \ \ell_{\rho_1}(\gamma)\le T\} }\frac{\ell_{\rho_2}(\gamma)}{\ell_{\rho_1}(\gamma)}.
\end{equation}
It follows  that
\begin{equation}
\label{Jformula}
\JJ(\rho_1,\rho_2)=\frac{h(\rho_2)}{h(\rho_1)}L(\psi(\rho_1),\rho_2).
\end{equation}

In analogy with Thurston's formulation of the Weil-Petersson metric on Teichm\"uller space, 
and Bonahon's re-formulation in terms of geodesic currents \cite[Thm. 19]{Bon88}, Bridgeman and Taylor \cite{BT08}
consider, for each $\rho\in QF(S)$, the function $\JJ_\rho:QF(S)\to \mathbb R$ given by $\JJ_\rho(\sigma)=\JJ(\rho,\sigma)$
and define a  non-negative symmetric two-tensor $G$, called the {\em pressure form}, on  the tangent bundle ${\sf T}QF(S)$,
where $G_\rho$  on ${\sf T}_\rho QF(S)$ is the Hessian of $\JJ_\rho$. By construction, the pressure form is invariant
under the action of the (extended) mapping class group ${\rm Mod}^*(S)$ of $S$.
It follows from the work of Wolpert \cite{Wol86}
and Bonahon \cite{Bon88}, that the restriction of the pressure form to the Fuchsian locus $F(S)$ is a multiple
of the Weil-Petersson metric.
Motivated by the work of McMullen \cite{mcmullen-pressure}
in the setting of Teichm\"uller space, Bridgeman \cite{Bri10} showed that the pressure form is non-degenerate
except along pure bending vectors based at points in the Fuchsian locus. Moreover, the pressure form
gives rise to a path metric on $QF(S)$, see \cite[Cor. 1.7]{BCLS}.

\medskip\noindent
{\bf Remark:} In the proof of Theorem 6.1 in \cite{Bri10}, Bridgeman gives an expression for $L(\psi(\rho_1),\rho_2)$ in
terms of equilibrium measures. Our equation (\ref{Lformula}) then follows from equations (12) and (13)  in \cite{BCLS}, 
see also the discussion
in section 8 of \cite{BCLS}. If one prefers that the proof of Theorem \ref{isom} be self-contained, one can take $J$ to be defined by
our equation (\ref{Jformula}) as is done in Bridgeman-Taylor \cite{BT08} and Bridgeman \cite{Bri10}.

\subsection{Isometries of renormalized pressure intersection}
We use the interpretation of renormalized pressure intersection in terms of geodesic currents to 
show that the restriction of an isometry $f$ of the renormalized pressure intersection
to the Fuchsian locus $F(S)$ agrees with an element $g$ of
the (extended) mapping class group. We further show that $f(\sigma)$ and $g(\sigma)$ have closely
related length spectrum whenever $\sigma\in QF(S)$.

\begin{prop}
If $f: QF(S) \rightarrow QF(S)$ is a smooth isometry of  $\JJ$, 
then $f$ preserves the Fuchsian locus $F(S)$ and there exists   $ g \in {\rm Mod}^*(S)$
such that $f$ and $g$ agree on the Fuchsian locus.

Moreover,  if $\alpha$ is a simple curve on $S$ and $\sigma\in QF(S)$, then
    $$h(g(\sigma)) \ell_{g(\sigma)}(\alpha)= h(f(\sigma)) \ell_{f(\sigma)}(\alpha).$$
\label{krel}
\end{prop}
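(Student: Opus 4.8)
The plan is to extract from the hypothesis that $f$ is a pointwise isometry of the pressure form, deduce that $f$ preserves the Fuchsian locus and restricts there to a mapping class, and then run a limiting argument---based on Bonahon's compactification---that turns the invariance of $\JJ$ into the asserted identity between length spectra.

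First I would observe that for every $\rho\in QF(S)$ the point $\rho$ is a minimum of $\JJ_\rho$ (the renormalized intersection satisfies $\JJ\ge 1$, with $\JJ(\rho,\rho)=L(\psi(\rho),\rho)=1$), hence a critical point, so that the pressure form $G_\rho=\mathrm{Hess}_\rho(\JJ_\rho)$ is a well-defined quadratic form on ${\sf T}_\rho QF(S)$. From $\JJ_{f(\rho)}\circ f=\JJ_\rho$ and the chain rule for Hessians at a critical point one gets $Df_\rho^{*}G_{f(\rho)}=G_\rho$, so the linear isomorphism $Df_\rho$ carries the null space of $G_\rho$ onto that of $G_{f(\rho)}$. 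By Bridgeman \cite{Bri10} this null space is nontrivial precisely on the Fuchsian locus (the null vectors being the pure bending vectors), whence $f(F(S))\subseteq F(S)$. Since the restriction of the pressure form to $F(S)$ is a fixed positive multiple of the Weil--Petersson metric (Wolpert \cite{Wol86}, Bonahon \cite{Bon88}), $f|_{F(S)}$ is a Weil--Petersson isometry and Masur--Wolf \cite{masur-wolf} produces $g\in{\rm Mod}^*(S)$ with $f|_{F(S)}=g|_{F(S)}$.

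For the second assertion I would use formula (\ref{Jformula}) together with the fact that $h\equiv 1$ on $F(S)$ (a closed-surface Fuchsian group has limit set a round circle, of Hausdorff dimension $1$, by Sullivan \cite{sullivan-entropy}). Fix $\sigma\in QF(S)$ and $\tau\in F(S)$. Since $f(\tau)=g(\tau)\in F(S)$, formula (\ref{Jformula}) gives
$$\JJ(\tau,\sigma)=\JJ(f(\tau),f(\sigma))=h(f(\sigma))\,L(\psi(g(\tau)),f(\sigma)),$$
and, as $g\in{\rm Mod}^*(S)$ also preserves $\JJ$ and $F(S)$,
$$\JJ(\tau,\sigma)=\JJ(g(\tau),g(\sigma))=h(g(\sigma))\,L(\psi(g(\tau)),g(\sigma)).$$
Comparing these, and letting $\rho=g(\tau)$ range over all of $F(S)$, we obtain
$$h(f(\sigma))\,L(\psi(\rho),f(\sigma))=h(g(\sigma))\,L(\psi(\rho),g(\sigma))\qquad\text{for every }\rho\in F(S).$$
Now let $\alpha$ be a simple closed curve. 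By Bonahon \cite{Bon88} the closure of $\bar\psi(F(S))$ in $\mathcal{PC}(S)$ is $\bar\psi(F(S))\cup PML(S)$, and since $C_\alpha\in ML(S)$ there are $\rho_n\in F(S)$ and $t_n>0$ with $t_n\psi(\rho_n)\to c\,C_\alpha$ in $\mathcal{C}(S)$ for some $c>0$. Multiplying the last identity (for $\rho=\rho_n$) by $t_n$, using linearity of $L$ in its first variable and continuity of $L$, and letting $n\to\infty$ yields $h(f(\sigma))\,L(C_\alpha,f(\sigma))=h(g(\sigma))\,L(C_\alpha,g(\sigma))$; since $L(C_\alpha,\tau)=\ell_\tau(\alpha)$ for every $\tau\in QF(S)$, this is precisely $h(g(\sigma))\ell_{g(\sigma)}(\alpha)=h(f(\sigma))\ell_{f(\sigma)}(\alpha)$.

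I expect the main obstacle to be the passage from the weak hypothesis---$f$ merely a smooth immersion preserving $\JJ$---to the bijectivity needed to apply Masur--Wolf on the Fuchsian locus; I would plan to resolve this by first showing that $f$ is a diffeomorphism of $QF(S)$ (for instance by exhibiting a properness property of $\JJ(\rho,\cdot)$, which makes $f$ a proper immersion and hence, as $QF(S)$ is simply connected, a covering map and so a diffeomorphism). The remaining ingredients---the tensorial behaviour of the Hessian at the critical point $\rho$, the continuity and first-variable linearity of $L$, and Bonahon's compactification---then enter essentially as black boxes.
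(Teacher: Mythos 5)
Your proposal is correct and follows essentially the same route as the paper: differentiate the invariance of $\JJ$ to get invariance of the pressure form, use Bridgeman's characterization of the degenerate vectors to see $f$ preserves $F(S)$, apply Masur--Wolf to produce $g\in{\rm Mod}^*(S)$, and then combine formula (\ref{Jformula}) (with $h$ constant on $F(S)$), Bonahon's compactification, and the continuity and linearity of $L$ to pass to the simple curve $\alpha$. The bijectivity point you flag before invoking Masur--Wolf is not addressed in the paper either (it simply applies the theorem to $f|_{F(S)}$), so your extra care there is a refinement rather than a divergence in method.
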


\begin{proof}
Since $f$ is smooth and preserves $\JJ$,  one sees by differentiating that $f$ also preserves the pressure form $G$. 

If $\sigma\in F(X)$ and $v$ is a pure-bending vector, then $G_\sigma(v,v)=0$,
so \hbox{$G_{f(\sigma)}(Df_\sigma(v),Df_\sigma(v))=0$}. Since, $f$ is an immersion, $Df_\sigma(v)=0$
which implies that $f(\sigma)\in F(S)$.

Since the restriction of $G$ to $F(S)$ is a multiple of the Weil-Petersson metric, the restriction of $f$ to the
Fuchsian locus is an isometry of the Weil-Petersson metric. Masur and Wolf \cite{masur-wolf}
proved every isometry of the Weil-Petersson metric is an element of ${\rm Mod}^*(S)$.
So we may choose $g\in {\rm Mod}^*(S)$ so that $f$ and $g$ agree on the Fuchsian locus $F(S)$.

If $\rho \in F(S)$ and $ \sigma \in QF(S)$, then
since 
$$\JJ(\rho,\sigma) = \JJ(f(\rho), f(\sigma)) = \JJ(g(\rho),g(\sigma)),$$
and $h$ is constant on $F(S)$,
we may conclude that
$$\frac{L(\psi(g(\rho)),g(\sigma))}{L(\psi(f(\rho),f(\sigma))} =\frac{h(f(\sigma))}{h(g(\sigma))}.$$

If $\alpha$ is a simple curve, let  $\{\rho_n\}$ be a sequence in $F(S)$  so
that $\{\bar\psi(f(\rho_n))\}=\{\bar\psi(g(\rho_n))\}$ converges to $[C_\alpha]$ in $\mathcal{PC}(S)$.
Since $L$ is continuous,
$$\frac{\ell_{g(\sigma)}(\alpha)}{\ell_{f(\sigma)}(\alpha)}=\lim \frac{L(\psi(g(\rho_n)),g(\sigma))}{L(\psi(f(\rho_n)),f(\sigma))}
=\frac{h(f(\sigma))}{h(g(\sigma))}$$
which establishes our claim.
\end{proof}

\subsection{Simply $k$-related Kleinian surface groups}

We say  that two Kleinian surface groups $\rho_1:\pi_1(S)\to{\rm PSL}(2,\mathbb C)$ and 
\hbox{$\rho_2:\pi_1(S)\to {\rm PSL}(2,\mathbb C)$}  are {\em simply $k$-related}, 
for some $k>0$, if
$$\ell_{\rho_1}(\alpha) = k \ell_{\rho_2}(\alpha)$$ 
whenever $\alpha$ is a simple closed curve on $S$. 

Proposition \ref{krel} implies that if $f$ is an isometry of the renormalized pressure intersection,
then there exists $g\in {\rm Mod}^*(S)$, such that $f(\sigma)$ and $g(\sigma)$ are simply
$k$-related whenever $\sigma\in QF(S)$. Theorem \ref{isom} will thus follow from the following
strengthening of our simple length rigidity theorem.

\medskip\noindent
{\bf Theorem \ref{ksimple rigidity}:} {\em
If $S$ is a closed, connected, orientable surface of genus at least two and
\hbox{$\rho_1:\pi_1(S)\to {\rm PSL}(2,\mathbb C)$} and 
\hbox{$\rho_2:\pi_1(S)\to {\rm PSL}(2,\mathbb C)$} are  simply $k$-related Kleinian surface groups, 
then $\rho_1$ is conjugate to either $\rho_2$ or $\bar\rho_2$.}

\medskip\noindent
{\em Proof of Theorem \ref{ksimple rigidity}:}
We first choose non-separating simple  curves $\alpha$ and $\beta$ on $S$ which intersect
exactly once, so that $\rho_1(\alpha)$, $\rho_1(\beta)$, $\rho_2(\alpha)$ and $\rho_2(\beta)$ are all
hyperbolic. (One may  do so, since, again by a result of Sullivan \cite{sullivan-finite},
only finitely many simple curves have parabolic images for $\rho_1$ or $\rho_2$.)

We can assume that $\rho_1$ and $\rho_2$ are $(\alpha,\beta)$-normalized, so
$$\rho_i(\alpha)= \pm \left( \begin{array}{cc}
\lambda_i & 0\\
0 &   \lambda_i^{-1}\end{array}\right) 
$$
where $|\lambda_i|> 1$, and
$$\rho_i(\beta)= \pm \left( \begin{array}{cc}
a _i& b_i\\
c_i &  d_i \end{array}\right)$$
where all the matrix coefficients are non-zero (see Lemma \ref{basic comp}).
Since $\rho_1$ and $\rho_2$ are simply $k$-related,
$$|\lambda_1|=|\lambda_2|^k.$$

Notice that if $k=1$, then Theorem \ref{ksimple rigidity} follows immediately from
our main result, Theorem \ref{main}.
So we may assume, without loss of generality, that $k>1$.

Let $\mu_i(n)$  be the modulus of the  eigenvalue of $\rho_i(\alpha^n\beta)$ with largest modulus. 
Since $\alpha^n\beta$ is simple for all $n$, $\mu_1(n)= \mu_2(n)^k$ for all $n$.
Lemma \ref{basic comp} implies that 
$$\log\mu_i(n)= n\log|\lambda_i| + \log|a_i| +   \Re\left( \lambda_i^{-2n}\frac{b_ic_i}{a_i^{2}}\right) + O(|\lambda_i|^{-4n}).$$
Since $\log\mu_1(n) = k\log\mu_2(n)$, after eliminating the leading terms (which are equal), we obtain
$$ \log|a_1| + \Re\left( \lambda_1^{-2n}\frac{b_1c_1}{a_1^{2}}\right)) + O(|\lambda_1|^{-4n}) =
 k\log|a_2| + k\Re\left( \lambda_2^{-2n}\frac{b_2c_2}{a_2^{2}}\right) + O(|\lambda_2|^{-4n}).
$$
Therefore, by considering the limits as $n\to\infty$, we see that
$$\log|a_1| = k\log|a_2|.$$
So, after subtracting the equal  leading order terms,
$$  \Re\left( \lambda_1^{-2n}\frac{b_1c_1}{a_1^{2}}\right) + O(|\lambda_1|^{-4n}) =
 k\Re\left( \lambda_2^{-2n}\frac{b_2c_2}{a_2^{2}}\right) + O(|\lambda_2|^{-4n}).
$$
for all $n$.
If we multiplly both sides of the above equation by $|\lambda_2|^{2n}$, then
since $\frac{|\lambda_1|}{|\lambda_2|}>1$,  the right hand side converges to 0, and we see that 
\begin{equation}
\label{real part zero}
\lim_{n\rightarrow \infty}\Re\left( \left(\frac{\lambda_2}{|\lambda_2|}\right)^{-2n}\frac{b_2c_2}{a_2^{2}}\right)  = 0.
\end{equation}

If we let
$$u = \left(\frac{\lambda_{2}}{|\lambda_{2}|}\right)^{-2} = e^{i \theta} \quad\textrm{and}\quad v = \frac{b_2c_2}{a_2^{2}} \neq 0.$$
then we may rewrite (\ref{real part zero}) as 
$$\lim_{n\rightarrow \infty}\Re(u^{n}v) = 0.$$
As we can always choose a sequence $\{n_{k}\}$ so that $\lim_{k \rightarrow \infty} u^{n_{k}} = 1$, 
we see that
$$\Re(v) = 0.$$ 

If $\theta$ is  an irrational multiple of $2\pi$, then for all $z\in S^1$ there exists a sequence $\{n_k\}$ so that
$\lim e^{n_k\theta}=z$, which implies that $\Re(z v) = 0$.
Therefore, in this case, as $v \neq 0$ then choosing  $z = |v|/v$, we get $|v|=0$ a contradiction.

If $\theta$ is a rational multiple of $2\pi$ and $u\ne\pm 1$, then there exists $\{ n_k\}$ so that $u^{n_k}=u$ for all $k$,
so $\Re(uv) = 0$.
Since $\Re(v) = 0$, $v = iy$ from some $y \in \R$, so, 
$$\Re(e^{i\theta} v) = -\sin(\theta) y=0.$$
Thus, $y = 0$, since $\sin(\theta)\ne 0$. It again follows that $v = 0$ and
we obtain a contradiction.

Therefore, we can assume that  $u = \pm 1$, so
$\lambda^2_2$  is real. It follows that if $\eta$ is a simple, non-separating curve on $S$, then
$\lambda_{\rho_2}^2(\eta)\in\mathbb R$, so
$$\Tr^2(\rho_2(\eta)) = \lambda_{\rho_2}^2(\eta)+2+\lambda_{\rho_2}^{-2}(\eta)\in\mathbb R.$$ 

In particular, since $\alpha^n\beta$ is simple and non-separating for all $n$, we see that
$$\Tr^2(\rho_2(\alpha^n\beta)) 
= \lambda_2^{2n}a_2^{2} + 2a_2d_2 + \lambda_2^{-2n}d_2^2\in\mathbb R,$$
so, 
$$\Im(\Tr^2(\rho_2(\alpha^n\beta)) )
 = \lambda_2^{2n}\Im(a_2^{2}) + 2\Im(a_2d_2) + \lambda_2^{-2n}\Im(d_2^2)=0$$
 for all $n$.
Therefore,
$$\lim_{n \rightarrow \infty} \frac{\Im(\Tr^2(\rho_2(\alpha^n\beta))}{\lambda_{2}^{2n}} = \Im(a_2^{2})  = 0,$$
so
$$\lim_{n \rightarrow \infty} \Im(\Tr^2(\rho_2(\alpha^n\beta))) = 2\Im(a_2d_2) = 0.$$
Since $a_2^{2}$ and $a_2d_2$ are real,  $v =\frac{b_2c_2}{a_2^2}= \frac{a_2d_2-1}{a_2^2}$ is real. 
Therefore, since we have already shown that $\Re(v)=0$, we see that $v = 0$, which
contradicts the fact that $a_2$, $b_2$ and $c_2$ are all non-zero.
Since we have now achieved a contradiction in all cases where $k>1$, this completes the proof.
\qed

\subsection{Proof of Theorem \ref{isom}}
If $f$ is an isometry of the renormalized pressure intersection, then Theorem \ref{krel} implies
that there exists $g\in {\rm Mod}^*(S)$ such that $f$ and $g$ agree on $F(S)$ and $f(\sigma)$
and $g(\sigma)$ are simply $k$-related, where $k=\frac{h(f(\sigma))}{h(g(\sigma))}$, whenever
$\sigma\in QF(S)$.
Theorem \ref{ksimple rigidity} implies that if $\sigma \in QF(S)$, then either $f(\sigma)=g(\sigma)$ or
$f(\sigma)=\tau(g(\sigma))$.
Since $f$ is continuous, this implies that either $f = g$ or $f=\tau\circ g$.  The result follows.
\eproof

\end{document}